\newcommand{\E}{\mathbb{E}}
\newcommand{\opL}{\mathcal{L}}
\newcommand{\T}{\mathsf{T}}
\newcommand{\eps}{\varepsilon}
\newtheorem{lemma}{Lemma}
\newtheorem{theorem}{Theorem}
\newtheorem{condition}{Condition}
\newtheorem{definition}{Definition}
\newtheorem{remark}{Remark}
\newtheorem{proposition}{Proposition}
\numberwithin{lemma}{section}
\numberwithin{theorem}{section}
\numberwithin{condition}{section}
\numberwithin{definition}{section}
\numberwithin{remark}{section}
\numberwithin{proposition}{section}
\title[Importance sampling for slow--fast diffusions based on moderate deviations]{Importance sampling for slow--fast diffusions based on moderate deviations}
\author{Matthew R. Morse and Konstantinos Spiliopoulos}
\address{Department of Mathematics and Statistics \\
Boston University \\
Boston, MA 02215}
\email[Matthew R. Morse]{mrmorse@bu.edu}
\email[Konstantinos Spiliopoulos]{kspiliop@math.bu.edu}
\thanks{The present research was partially supported by the National Science Foundation (DMS 1550918)}
\date{\today}
\begin{document}

\begin{abstract}
We consider systems of slow--fast diffusions with small noise in the slow component.  We construct provably logarithmic asymptotically optimal importance schemes for the estimation of rare events based on the moderate deviations principle.  Using the subsolution approach we construct schemes and identify conditions under which the schemes will be asymptotically optimal.  Moderate deviations--based importance sampling offers a viable alternative to large deviations importance sampling when the events are not too rare. In particular, in many cases of interest one can indeed construct the required change of measure in closed form, a task which is more complicated using the large deviations--based importance sampling, especially when it comes to multiscale dynamically evolving processes. The presence of multiple scales and the fact that we do not make any periodicity assumptions for the coefficients driving the processes, complicates the design and the analysis of efficient importance sampling schemes. Simulation studies illustrate the theory.
\end{abstract}

\maketitle

\textbf{Key words.} Importance sampling, moderate deviations, multiscale processes, rare events.

\textbf{AMS Subject classification.} 60F05, 60F10, 60G99, 65C05

\section{Introduction}

The goal of this paper is to explore the properties of moderate deviations--based importance sampling for small noise multiscale diffusion processes. Importance sampling is a variance reduction technique in Monte Carlo simulation, which is in particular useful when one is interested in estimating rare events. We consider the following system of slow--fast diffusion processes driven by small noise
\begin{align}
  dX_t^\eps &= \left[ \frac{\eps}{\delta} b(X_t^\eps, Y_t^\eps) + c(X_t^\eps, Y_t^\eps) \right] \,dt + \sqrt{\eps} \sigma(X_t^\eps, Y_t^\eps) \,dW_t  \label{E:generalSDE}\\
  dY_t^\eps &= \frac{1}{\delta} \left[ \frac{\eps}{\delta} f(X_t^\eps, Y_t^\eps) + g(X_t^\eps, Y_t^\eps) \right] \,dt + \frac{\sqrt{\eps}}{\delta} \left[ \tau_1(X_t^\eps, Y_t^\eps) \,dW_t + \tau_2(X_t^\eps, Y_t^\eps) \,dB_t \right]  \notag \\
  & X_{t_0}^\eps = x_0, \quad Y_{t_0}^\eps = y_0 \notag
\end{align}
for $t \in [t_0,T]$ such that $(X_t^\eps, Y_t^\eps) \in \mathbb{R}^n \times \mathbb{R}^d$. For convenience, we refer to the state space of $Y^\eps$ as $\mathcal{Y}$. Here, $W_t$ and $B_t$ are independent $m$-dimensional Brownian motions. The parameter $\delta \ll 1$ is the time-scale separation parameter, whereas $\eps \ll 1$ controls the strength of the noise.  The scaling in  \eqref{E:generalSDE} implies that $X^\eps$ is the slow motion and $Y^\eps$ is the fast motion. In this work we assume that the initial point $(x_0,y_0)$ is given.

Depending on the order in which $\eps, \delta$ go to zero, we get different behavior. We are interested in the following two limiting regimes:
\begin{equation}\label{Eq:Regimes}
  \lim_{\eps \downarrow 0} \frac{\eps}{\delta} = \begin{cases}
     \infty, & \text{Regime 1}, \\
     \gamma \in (0, \infty), & \text{Regime 2}.
   \end{cases}
\end{equation}

Moderate deviations address the behavior of the process $X^{\eps}$ in the regime between the central limit theorem regime and the large deviation regime. In particular, letting $h(\eps) \to + \infty$ such that $\sqrt{\eps} h(\eps) \to 0$ as $\eps \downarrow 0$, we denote the law of large numbers limit by $\bar{X}_t = \lim_{\eps \downarrow 0} X_t^\eps$ (in the appropriate sense), and define the moderate deviation process to be
\begin{equation} \label{E:MDprocess}
  \eta_t^\eps = \frac{X_t^\eps - \bar{X}_t}{\sqrt{\eps} h(\eps)}.
\end{equation}

 Deriving the moderate deviations principle for the process $X_{t}^{\eps}$ amounts to deriving the large deviation principle for $\eta_t^\eps$. Notice that if $h(\eps) = 1$ then the limiting behavior of $\eta_t^\eps$ is that of the central limit theorem (CLT), studied in \cite{S14}, whereas if $h(\eps) = 1 / \sqrt{\eps}$ then we would get the large deviation result, studied in the series of papers \cite{DS12, FS, S13, S15, Veretennikov2} under various assumptions on coefficients.  Moderate deviations for systems of the form \eqref{E:generalSDE} have been studied in \cite{BF77, F78, G03} and in its full generality allowed by \eqref{E:generalSDE}-\eqref{Eq:Regimes} in  \cite{MorseSpiliopoulos2017}.

System (\ref{E:generalSDE}) represents a general system of slow-fast diffusion with small noise in the slow component. Note that it captures as special case the standard slow-fast  averaging systems if we set the coefficients $b=g=0$, see an example of this in Sections \ref{SS:TwoScaleSystem_a} and \ref{SS:TwoScaleSystem_b} and more generally \cite{BLP,PS}. When we are in Regime 1 with $b\neq 0$, then we are in the homogenization regime, where a fast component is part of the slow motion. A typical example of such a situation is the case where we choose the coefficients $f=b$, $g=c$, $\tau_1=\sigma$ and $\tau_2=0$. These choices then correspond to the overdamped Langevin equation in a rough potential, see for example Section \ref{SS:PeriodicProblem} and in particularly \cite{Z88,DSW12}.

The goal of this paper is to translate the existing theoretical results on moderate deviations to related importance sampling simulation schemes. The need to simulate rare events appears in many areas of applications, ranging from financial engineering to physics, biology, chemistry, and climate modeling. However, when one is interested in rare events, mathematical and computational challenges arise. Consider, for example a sequence $\{X^{\eps}\}_{\eps>0}$ of random elements and assume that we want to estimate $0<\mathbb{P}\left[X^{\eps}\in A\right]\ll 1$ for a set $A$ for which the event $\{X^{\eps}\in A\}$ is less likely to happen as $\eps$ gets smaller. Given that typically closed form formulas are not available and that numerical approximations are either too crude or unavailable,  one has to resort to simulation. Naive Monte Carlo simulation techniques (i.e., using the unbiased estimator $\hat{p}^{\eps}=\frac{1}{N}\sum_{j=1}^{N}1_{X^{\eps,j}\in A}$) perform rather poorly in the rare-event regime. To be precise, for naive Monte Carlo it is known that in order to achieve relative error of order one, one needs an effective sample size $N\approx 1/p^{\eps}$, e.g. see \cite{AsmussenGlynn2007}. This means that for a fixed computational
cost, relative errors grow rapidly as the event becomes more rare.  Therefore,  naive Monte Carlo is not practical for rare-event simulation and construction of accelerated Monte Carlo methods that reduce the variance of the estimators becomes important. One such method  is importance sampling.

In importance sampling one changes the measure appropriately and simulates the event of interest under the new measure.  The new simulation measure has the property that it reduces the variance of the estimator. In contrast to the vast majority of the literature, our goal in this paper is to construct changes of measure that are optimal in the moderate deviations regime rather than in the large deviations regime. Large deviations (LD)--based importance sampling for stochastic dynamical systems is a subject that has been reasonably well studied in recent years, see for example \cite{DSW12,DupuisSpiliopoulosZhou2013,DupuisWang,DupuisWang2, VandenEijndenWeare}. In particular for systems of the form \eqref{E:generalSDE}, we refer the interested reader to \cite{DSW12} for large deviations--based importance sampling in the case of Regime 1, as defined by \eqref{Eq:Regimes}. The construction of the asymptotically (as $\eps\downarrow 0$) optimal change of measure is based on subsolutions for a related partial differential equation (PDE) of Hamilton--Jacobi--Bellman (HJB) type, an  idea first introduced in  \cite{DupuisWang,DupuisWang2}. In large deviations--based importance sampling, the difficulty is in the actual constructions of appropriate subsolutions to the related HJB equation, due to the fact that such PDEs are typically highly non-linear.

The idea that we exploit in this paper is that in cases where one is interested in events that are rare, but not too rare, moderate deviations (MD)--based importance sampling (IS) may offer a useful alternative.  While LD--based importance sampling works very well when it can be implemented, MD--based IS schemes turn out to work equally well for events that are moderately rare and are often easier to implement in practice.  The reason is that the corresponding HJB equation takes a considerably simpler form and closed form subsolutions are available in many cases.

In addition, we would like to emphasize that the cost of simulation of a single trajectory is relatively high for multiscale problems. For events that are rare, efficient importance sampling schemes are a relevant strategy for reducing the total cost of simulation by reducing the relative error per sample.

To be more precise, the goal of this paper is to develop asymptotically (as $\eps\downarrow 0$) optimal changes of measure (in the sense of variance reduction) for estimation of quantities of the form
\begin{equation}
\theta(\eps)=\mathbb{E}\left[e^{-h^{2}(\eps)H(\eta^{\eps}_{T})} \right] % \quad\text{ or }\quad \theta(\eps)=\mathbb{P}\left[\eta^{\eps}_{T}\in A\right]
\label{Eq:QuantitiesOfInterest}
\end{equation}
with $H$ a bounded and continuous function. If one is interested in accurate estimation of quantities as in   \eqref{Eq:QuantitiesOfInterest}, then one does not have any hope of closed form formulas and logarithmic large or moderate deviation estimates are too crude (since they ignore potential important prefactors) and thus simulation becomes necessary.

The choice of the scaling  $h^2(\eps)$ in (\ref{Eq:QuantitiesOfInterest})  comes from the moderate deviations scaling.  Often one is interested in estimating quantities like (\ref{Eq:QuantitiesOfInterest}) but with $1/\eps$ in place of $h^2(\eps)$, which would then correspond to the large deviations scaling.  To this end, as we shall see in  Section \ref{S:Simulations}, estimation of $\theta(\eps)$ in \eqref{Eq:QuantitiesOfInterest} can be related to MD--based importance sampling for events of the form
\begin{equation*}
\tilde{\theta}(\eps)=\mathbb{E}\left[e^{-\frac{1}{\eps}\tilde{H}(X^{\eps}_{T})} \right] %\quad\text{ or }\quad \tilde{\theta}(\eps)=\mathbb{P}\left[X^{\eps}_{T}\in \tilde{A}\right] % \label{Eq:QuantitiesOfInterest2}
\end{equation*}
where $\tilde{H}$ is related to the original $H$ and typically may depend on the specific true value of $\eps$ with respect to which the actual simulation is being done.

Even though large deviations--based importance sampling has been reasonably well studied, moderate deviations--based importance sampling has only received little attention. To the best of our knowledge, the only exception to this is the recent paper \cite{DupuisJohnson2017}, where the authors study moderate deviations--based importance sampling for stochastic recursive algorithms. In this paper, we address the issues that come up in such designs in the setting of multiscale diffusions as in \eqref{E:generalSDE}. The methodology that we follow in this paper in order to establish logarithmic asymptotic optimality of the suggested changes of measure is the weak convergence approach of \cite{DE97}, which turns the large deviations problem to a law of large numbers for an appropriate stochastic control problem. The main technical difficulty, which also constitutes the main theoretical contribution of this work, is to establish tightness of the appropriate controlled version of the moderate deviations process \eqref{E:MDprocess}. Notice that we do not make any periodicity assumptions on the coefficients, which means that the fast motion can take values on the whole space which makes the derivation of the needed estimates tedious at certain places.

The rest of the paper is organized as follows. In Section \ref{S:Preliminaries} we go over our notation and imposed conditions as well as the related moderate deviations theory of \cite{MorseSpiliopoulos2017} and a general discussion on importance sampling tailored to our case of interest.  In Section \ref{S:MainResult} we state and prove our main result on logarithmic asymptotic optimality of appropriate moderate deviations--based changes of measure. Section \ref{S:RelaxedConditions} discusses how one can relax some of the growth conditions on the coefficients. Section \ref{S:Simulations} contains the simulation studies that illustrate our theoretical results. Finally, the Appendix \ref{S:AppendixA} contains certain existing results that are used in our paper.

\section{Related Moderate Deviations results and preliminaries on importance sampling} \label{S:Preliminaries}

\subsection{Notation and assumptions} \label{SS:Notation}
In this section we set up notation and lay out the assumptions of the paper. We work with the canonical filtered probability space $(\Omega, \mathscr{F}, \mathbb{P})$ equipped with a filtration $\mathscr{F}_t$ that is right continuous and such that $\mathscr{F}_0$ contains all $\mathbb{P}$-negligible sets.

For given sets $A,B$, for $i,j\in\mathbb{N}$ and $\alpha>0$ we denote by $\mathcal{C}_b^{i, j + \alpha}(A \times B)$, the space of functions with $i$ bounded derivatives in $x$ and $j$ derivatives in $y$, with all partial derivatives being $\alpha$-H\"{o}lder continuous with respect to $y$, uniformly in $x$.

In regards to the coefficients of the SDE \eqref{E:generalSDE}, we assume Conditions \ref{C:growth} and \ref{C:ergodic}.

\begin{condition} \label{C:growth}
\begin{enumerate}[(i)]
\item The functions $b$ and $c$ are in $\mathcal{C}^{2,\alpha}(\mathbb{R}^n\times\mathcal{Y})$  for some $\alpha>0$ and there exist constants $0<K<\infty$ and $q_{b},q_{c}\geq 0$ such that
\begin{align*}
    \lvert b(x, y) \rvert + \lVert \nabla_x b(x, y) \rVert + \lVert \nabla_x \nabla_x b(x, y) \rVert \le K (1 + \lvert y \rvert^{q_{b}})\nonumber\\
    \lvert c(x, y) \rvert + \lVert \nabla_x c(x, y) \rVert + \lVert \nabla_x \nabla_x c(x, y) \rVert \le K (1 + \lvert y \rvert^{q_{c}})\nonumber\\
    \lvert b(x, y_1)-b(x, y_2) \rvert+\lvert c(x, y_1)-c(x, y_2) \rvert\leq K\lvert y_1-y_2 \rvert^{\alpha}\nonumber
\end{align*}
\item For every $N >0$ there exists a constant $C(N)$ such that for all $x_1, x_2 \in \mathbb{R}^n$ and $\lvert y \rvert \le N$, the diffusion matrix $\sigma$ satisfies
\begin{equation*}
    \lVert \sigma(x_1, y) - \sigma(x_2, y) \rVert \le C(N) \lvert x_1 - x_2 \rvert.
\end{equation*}
Moreover, there exists $K > 0$ and $q_{\sigma} \geq 0 $ such that
\begin{equation*}
    \lVert \sigma(x, y) \rVert \le K (1 + \lvert y \rvert^{q_{\sigma}}).
\end{equation*}
\item The functions $f(x, y)$, $g(x, y)$, $\tau_1(x, y)$, and $\tau_2(x, y)$ are $\mathcal{C}_b^{2, 2 + \alpha}(\mathbb{R}^n\times \mathcal{Y})$ for some $\alpha>0$. %\\Namely, they have two bounded derivatives in $x$ and $y$, with all partial derivatives being $\alpha$-H\"{o}lder continuous with respect to $y$, uniformly in $x$.
In addition, $g$ is uniformly bounded.
\end{enumerate}
\end{condition}

\begin{condition} \label{C:ergodic}
\begin{enumerate}[(i)]
\item The diffusion matrix $\tau_1 \tau_1^\T + \tau_2 \tau_2^\T$ is uniformly continuous and bounded and nondegenerate and there exist constants $\beta_{1},\beta_{2}>0$ such that
\[
0<\beta_{1}\leq\frac{\left<(\tau_{1}\tau_{1}^{\T}(x,y)+\tau_{2}\tau_{2}^{\T}(x,y))y,y\right>}{|y|^{2}}\leq \beta_{2}.
\]
\item %There exists a $\Gamma>0$ and a globally Lipschitz, uniformly bounded in $x$, function $\zeta(x,y)$ with Lipschitz constant $L_{\zeta}<\Gamma$ such that in Regime 1
%    \[
% f(x, y) =  - \Gamma y +\zeta(x,y)
%\]
Let us consider $\gamma_{0}>0$, a $d\times d$ positive matrix $\Gamma>0$ with bounded entries and a globally Lipschitz, uniformly bounded in $x$, function $\zeta(x,y)$ with Lipschitz constant $L_{\zeta}$ such that $\left<(\Gamma-L_{\zeta}I)\cdot\xi,\xi\right>\geq \gamma_{0}|\xi|^{2}$ for every $\xi\in\mathbb{R}^{d}$. Here $I$ is the $d\times d$ identity matrix. We assume that  in Regime 1
    \[
 f(x, y) =  - \Gamma y +\zeta(x,y)
\]
and in  Regime 2,
\[
 \gamma f(x, y) + g(x, y) = - \Gamma y +\zeta(x,y).
\]
The function $\zeta(x,y)$ can grow at most linearly with respect to $y$.
%\item There exists $R,\Gamma>0$  such that in Regime 1,
%\[
%\sup_{x \in \mathbb{R}^n} f(x, y) \cdot y \leq  - \Gamma |y|^{2} \text{ for } |y|>R,
%\]
%and in  Regime 2,
%\[
%\sup_{x \in \mathbb{R}^n} (\gamma f(x, y) + g(x, y)) \cdot y \leq  - \Gamma |y|^{2} \text{ for } |y|>R.
%\]
\end{enumerate}
\end{condition}

Next, we define the infinitesimal generators that correspond to the appropriate fast process in each one of the two regimes.  These generators arise by rescaling time by $\eps^2/\delta$ in Regime 1 or by $1/\delta$ in Regime 2 and then letting $\eps \downarrow 0$. In particular, for each Regime $i = 1, 2$, define the operator $\opL_{i,x}$ (treating $x$ as a parameter) by
\begin{align*}
  \opL_{1,x} F(y) &= ( \nabla_y F) (y) \cdot f(x, y) + \frac{1}{2} (\tau_1 \tau_1^\T + \tau_2 \tau_2^\T) (x, y) : \nabla_y \nabla_y F(y), \\ %\label{Eq;Operators} \\
  \opL_{2,x} F(y) &= ( \nabla_y F) (y) \cdot (\gamma f(x, y) + g(x,y)) + \gamma \frac{1}{2} (\tau_1 \tau_1^\T + \tau_2 \tau_2^\T) (x, y) : \nabla_y \nabla_y F(y)\nonumber
\end{align*}
where the notation $A:B$ for two $k \times k$ matrices means
\begin{equation*}
  A:B = \text{Tr}[A^{\top}B]=\sum_{i=1}^k \sum_{j=1}^k a_{ij} b_{ij}.
\end{equation*}
For a $k \times k$ matrix $A$ and a $n$-dimensional vector--valued function of a $k$-dimensional vector $f(x)$ define $A : \nabla \nabla f$ as a $n$-dimensional vector where component $i$ is equal to $A : \nabla \nabla f_i$. Also, for notational convenience we sometimes collect the variables at the end of the expression and we write
\begin{equation*}
  \tau \tau^\T (x, y) = \tau(x, y) \tau(x, y)^\T.
\end{equation*}

Now, the operators $\opL_{1,x}$ and $\opL_{2,x}$ are the infinitesimal generators for the processes that play the role of the fast motion (and with respect to which averaging is being performed) in Regimes 1 and 2 respectively.  Condition \ref{C:ergodic} guarantees that the fast process in each Regime $i = 1, 2$ has a unique invariant measure, denoted by $\mu_{i, x}(dy)$, for each $x\in\mathbb{R}^{n}$.

However, because the fast motion takes values in an unbounded space, $\mathcal{Y}=\mathbb{R}^{d}$, the constants $q_{b},q_{c},q_{\sigma}$ that determine the growth of the coefficients from Condition \ref{C:growth}  need to be related in order for the subsequent tightness argument to go through. In particular, we have Condition \ref{C:Tightness}.
\begin{condition}\label{C:Tightness}
Consider the constants $q_{b},q_{c},q_{\sigma}$ from Condition \ref{C:growth}.  Then, we assume that
\begin{align*}
&\max\left\{q_{b}+ q_{\sigma},  q_{c}+q_{\sigma} \right\}< 1.
%&\max\left\{q_{b}\vee q_{\sigma},  q_{b}\vee q_{c}+q_{\sigma} \right\}< 1.
%  \nonumber\\
%&\max\left\{q_{b}\vee q_{\sigma}, 2 q_{b,c}, q_{b,c}+2q_{\sigma} \right\}\leq 1. \textcolor{red}{[k: check this]}
\end{align*}
\end{condition}

Sometimes, we may use the notation  $a\vee b=\max\{a,b\}$. In addition, in Regime 1, we impose the following centering condition.
\begin{condition} \label{C:center}
In  Regime 1, the drift term $b$ satisfies
\begin{equation*}
    \int_\mathcal{Y} b(x, y) \mu_{1, x}(dy) = 0.
\end{equation*}
\end{condition}

Then by the results in \cite{PV01, PV03}, which we collected in Theorem \ref{T:regularity} in the Appendix, for each $\ell \in 1, \dots, n$, there is a unique, twice differentiable function $\chi_\ell(x, y)$ in the class of functions that grows at most polynomially in $\lvert y \rvert$ that satisfies the equation
\begin{equation}  \label{E:cell}
  \opL_{1,x} \chi_\ell(x,y) = - b_\ell(x, y), \qquad \int_\mathcal{Y} \chi_\ell(x, y) \mu_{1,x}(dy) = 0, \text{ for }\ell=1,\cdots,n,
\end{equation}
where $b_{\ell}(x,y)$ is the $\ell^{\text{th}}$ component of the vector $b(x,y)=\left(b_{1}(x,y),\cdots, b_{n}(x,y)\right)$. Let us set $\chi(x,y) = (\chi_1(x,y), \dots, \chi_n(x, y))$.

Define the function $\lambda_i(x, y) \colon \mathbb{R}^n \times \mathcal{Y} \mapsto \mathbb{R}^n$ under Regime $i$ by
\begin{align*}
  \lambda_1(x, y) &= ( \nabla_y \chi ) (x, y) g(x, y) + c(x, y) \\
  \lambda_2(x, y) &= \gamma b(x, y) + c(x,y).
\end{align*}

Under Regime $i$, for any function $G(x,y)$, define the averaged function $\bar{G}$ by
\begin{equation*} % \label{E:Gbar}
  \bar{G}(x) = \int_{\mathcal{Y}} G(x, y) \mu_{i,x}(dy).
\end{equation*}
It follows that $\bar{G}$ inherits the continuity and differentiability properties of $G$. In particular, for each regime,
\begin{equation*}
  \bar{\lambda}_i(x) = \int_{\mathcal{Y}} \lambda_i(x, y) \mu_{i,x}(dy).
\end{equation*}

Then by an argument similar to that of Theorem 3.2 in \cite{S13}, as $\eps \downarrow 0$, in Regime $i$ we have the averaging result $X_t^\eps \to \bar{X}_t$ in probability, where $\bar{X}_t$ is defined by
\begin{equation*} \label{E:averagedSDE}
  d\bar{X}_t = \bar{\lambda}_i(\bar{X}_t) \,dt, \qquad \bar{X}_{t_0} = x_0.
\end{equation*}

\subsection{Related moderate deviations theory}\label{SS:RelatedMDResults}

In this subsection we recall the results of \cite{MorseSpiliopoulos2017} on the path space moderate deviations theory for $\{X^{\eps}_{\cdot}\}_{\eps>0}$ satisfying \eqref{E:generalSDE}.

We say that $\{ \eta^\eps \}_{\eps > 0}$, defined by \eqref{E:MDprocess}, satisfies the Laplace principle with action functional $S(\xi)$ if for any bounded continuous real-valued function $H$,
\begin{equation*}
  \lim_{\eps \downarrow 0} - \frac{1}{h^2(\eps)} \log \mathbb{E} \left[ \exp \left\{ - h^2(\eps) H(\eta^\eps) \right\} \right] = \inf_{\xi \in \mathcal{C}([t_0, T]; \mathbb{R}^n)} \left( S(\xi) + H(\xi) \right)
\end{equation*}
It is then well known, see \cite{DE97}, that if $S(\xi)$ has compact level sets (which is true in our case), then the Laplace principle is equivalent to the large deviations principle. Since $\eta^\eps$ is a rescaling of $X^\eps$, we say that $\{ X^\eps \}$ satisfies a moderate deviations principle (MDP) with action functional $S(\xi)$.

In order to state the moderate deviations principle for $\{X^\eps \}$, we need some more notation first. For Regime $i=1,2$, introduce the function $\Phi_i(x, y)$, given by the PDE
\begin{equation} \label{E:Phi}
  \opL_{i,x} \Phi_i(x, y) = - (\lambda_i(x,y) - \bar{\lambda}_i(x) ), \qquad \int_\mathcal{Y} \Phi_i(x, y) \mu_{i,x}(dy) = 0.
\end{equation}

Under our assumptions, each one of $\lambda_i-\bar{\lambda_{i}}$, for $i=1,2$, satisfy the assumptions of Theorem \ref{T:regularity}, part (iii), and thus
by Theorem \ref{T:regularity}, \eqref{E:Phi} has a unique classical solution in the class of functions which grow at most polynomially in $\lvert y \rvert$ for every $x$.

 In order to state the moderate deviations Theorem \ref{T:main}, we need to know the relative rates at which $\delta$, $\eps$, and $1 / h(\eps)$ vanish. In particular, in Regime $i$, $i = 1, 2$, define $j_i$ by
\begin{equation}\label{Eq:LimitingConstants}
  j_1 = \lim_{\eps \downarrow 0} \frac{\delta / \eps}{\sqrt{\eps} h(\eps)}, \qquad
  j_2 = \lim_{\eps \downarrow 0} \frac{\eps / \delta - \gamma}{\sqrt{\eps} h(\eps) }.
\end{equation}
$j_i$ specifies the relative rate at which $\eps/\delta$ goes to its limit and $h(\eps)$  goes to infinity. In order for a moderate deviations principle to hold in Regime $i$, we require that $j_i$ be finite.

\begin{theorem}[Theorem 2.1 of \cite{MorseSpiliopoulos2017}] \label{T:main}
Let Conditions \ref{C:growth}, \ref{C:ergodic}, \ref{C:Tightness}, and \ref{C:center} be in place. We also assume that the quantities $j_1$ and $j_2$ from (\ref{Eq:LimitingConstants}) are finite depending on whether we consider Regime 1 or 2 respectively.
%\footnote{The corresponding Condition \ref{C:Tightness} in \cite{MorseSpiliopoulos2017} is slightly  stronger than the one imposed here. This is because, in \cite{MorseSpiliopoulos2017} tightness was proven making use of uniform in time bounds for the $L^{2}$ norm of the controlled moderate deviations process corresponding to \eqref{E:MDprocess}. As we  shall see in this paper, with a little  bit of extra work, one can get by with only $L^{1}$ bounds. Hence one can weaken the requirement to the current Condition \ref{C:Tightness} at least within the setup of the present paper (consider Proposition \ref{P:tightness} in Section \ref{SS:Tightness} with $(u_{1}^{\eps},u_{2}^{\eps})=(0,0)$).}
Then under Regime $i$, $i = 1,2$, the process $\{X^{\eps},\eps>0\}$ from \eqref{E:generalSDE} satisfies the MDP, with the action functional $S(\xi)$ given by
\begin{equation*}
  S_{t_0 T}(\xi) = \frac{1}{2} \int_{t_0}^T \left\langle \dot{\xi}_s - \kappa \left(\bar{X}_s, \xi_{s} \right), q^{-1}(\bar{X}_s) \left(\dot{\xi}_s - \kappa \left(\bar{X}_s, \xi_{s} \right) \right) \right\rangle ds
\end{equation*}
if $\xi \in \mathcal{C}([t_0, T]; \mathbb{R}^n)$ is absolutely continuous and $\xi_{t_0}=0$, and $\infty$ otherwise.
Under Regime 1, we have
\begin{align*}
  \kappa(x, \eta) &= \left( \nabla_x \bar{\lambda}_1 \right) (x) \eta + j_1 \int_\mathcal{Y} \left( \nabla_y \Phi_1 \right) (x,y) g(x, y) \mu_{1,x}(dy) \\% \label{E:rDef}\\
  q(x) &= \int_\mathcal{Y} \left( \alpha_1 \alpha_1^\T + \alpha_2 \alpha_2^\T \right) (x, y) \mu_{1,x}(dy) \\%\label{E:qDef} \\
  \alpha_1(x,y) &= \sigma(x,y) + \left( \nabla_y \chi \right) (x, y) \tau_1(x, y), \
  \alpha_2(x,y) = \left( \nabla_y \chi \right) (x, y) \tau_2(x, y).%\label{E:alphaDef}
\end{align*}

Under Regime 2, we have
\begin{align*}
  &\kappa(x, \eta) =\left( \nabla_x \bar{\lambda}_2 \right) (x) \eta+j_2 \int_\mathcal{Y} \left[ b(x, y)  -\frac{1}{\gamma} \left( \nabla_y \Phi_2 \right) (x, y)  g(x, y)  \right] \mu_{2, x} (dy)\\
  &q(x) = \int_\mathcal{Y} \left( \alpha_1 \alpha_1^\T + \alpha_2 \alpha_2^\T \right) (x, y) \mu_{2, x}(dy) \\
    &\alpha_1(x,y) = \sigma(x,y) + \left( \nabla_y \Phi_2 \right) (x, y) \tau_1(x, y), \quad
  \alpha_2(x,y) = \left( \nabla_y \Phi_2 \right) (x, y) \tau_2(x, y),
\end{align*}
where the finite constants $j_{1}$, $j_{2}$ are defined in \eqref{Eq:LimitingConstants}.
\end{theorem}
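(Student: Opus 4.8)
The plan is to follow the weak convergence (Laplace principle) approach of \cite{DE97}. Since the claimed MDP for $\{X^\eps\}$ is, by definition, the LDP for the rescaled family $\{\eta^\eps\}$ of \eqref{E:MDprocess}, it suffices to prove the Laplace principle for $\eta^\eps$ with rate $h^2(\eps)$ and action functional $S_{t_0T}$. First I would invoke the variational (Bou\'e--Dupuis type) representation for exponential functionals of the driving Brownian motions $W,B$, writing
$-\tfrac{1}{h^2(\eps)}\log\E[e^{-h^2(\eps)H(\eta^\eps)}]$ as the infimum over progressively measurable, square-integrable controls $u=(u_1,u_2)$ of $\E\big[\tfrac12\int_{t_0}^T(|u_{1,s}|^2+|u_{2,s}|^2)\,ds+H(\bar\eta^\eps)\big]$, where $\bar X^\eps,\bar Y^\eps$ solve the controlled analogue of \eqref{E:generalSDE} (with extra drifts $\sqrt\eps\,\sigma u_1$ in the slow equation and $\tfrac{\sqrt\eps}{\delta}(\tau_1 u_1+\tau_2 u_2)$ in the fast equation) and $\bar\eta^\eps=(\bar X^\eps-\bar X)/(\sqrt\eps h(\eps))$. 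The proof then reduces to a lower bound and a matching upper bound for this stochastic control problem as $\eps\downarrow0$.

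Next I would derive the controlled equation for $\bar\eta^\eps$. Subtracting the averaged equation $d\bar X=\bar\lambda_i(\bar X)\,dt$ and dividing by $\sqrt\eps h(\eps)$ produces: a fast-oscillating drift $\tfrac{1}{\sqrt\eps h(\eps)}[\lambda_i(\bar X^\eps,\bar Y^\eps)-\bar\lambda_i(\bar X^\eps)]$; a ``linearization'' term $\tfrac{1}{\sqrt\eps h(\eps)}[\bar\lambda_i(\bar X^\eps)-\bar\lambda_i(\bar X)]$ which tends to $(\nabla_x\bar\lambda_i)(\bar X_s)\xi_s$; a control term $\tfrac{1}{h(\eps)}\sigma(\bar X^\eps,\bar Y^\eps)u_{1,s}$; and a stochastic integral of order $1/h(\eps)$ that vanishes. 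The oscillating term is treated by applying It\^o's formula to $\Phi_i(\bar X^\eps,\bar Y^\eps)$ and using the Poisson equation \eqref{E:Phi} (and, in Regime 1, the cell problem \eqref{E:cell} for $\chi$ together with the centering Condition \ref{C:center}); this converts it into a martingale part (which, after averaging against $\mu_{i,x}$, yields the diffusion coefficient $q$ built from $\alpha_1,\alpha_2$), a boundary term, and a term carrying the factor $\delta/\eps$ in Regime 1 (resp.\ $\eps/\delta-\gamma$ in Regime 2), which in the limit gives precisely the $j_i$-contributions to $\kappa$ from \eqref{Eq:LimitingConstants}. Theorem \ref{T:regularity} guarantees $\chi,\Phi_i$ and their needed derivatives grow at most polynomially in $|y|$, which is essential for the estimates to close.

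The core technical step, and the one I expect to be the main obstacle, is tightness of the family $\{(\bar\eta^\eps,\nu^\eps)\}$, where $\nu^\eps$ is the occupation measure of the controlled fast process and control on $[t_0,T]$. The difficulty is that $\mathcal Y=\mathbb R^d$ is unbounded and no periodicity is assumed, so one cannot appeal to compactness of the fast state space. I would first use the dissipativity in Condition \ref{C:ergodic} (the drift has the form $-\Gamma y+\zeta$ with $\langle(\Gamma-L_\zeta I)\xi,\xi\rangle\ge\gamma_0|\xi|^2$ and $\zeta$ at most linear) to obtain uniform-in-$(\eps,\delta)$ moment bounds $\sup_\eps\E\big[\int_{t_0}^T|\bar Y^\eps_s|^p\,ds\big]<\infty$ and, along with a standard a priori bound forcing $\E\big[\tfrac12\int|u|^2\big]$ to stay bounded (since $H$ is bounded, a cheaper control always exists), control of $\int|u_s|^2|\bar Y^\eps_s|^{q}\,ds$; then I would propagate these through the slow dynamics using the polynomial growth bounds of Condition \ref{C:growth} for $b,c,\sigma$, where Condition \ref{C:Tightness} ($q_b+q_\sigma<1$, $q_c+q_\sigma<1$) is exactly what makes the relevant products integrable against the moment and entropy bounds, yielding uniform modulus-of-continuity estimates for $\bar\eta^\eps$ and hence tightness.

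Given tightness, I would extract a subsequential limit $(\bar\eta^\eps,\nu^\eps)\to(\xi,\nu)$, show by an ergodic/averaging argument that $\nu$ disintegrates as $\mu_{i,\bar X_s}(dy)\,\rho_s(du)\,ds$, and pass to the limit in the controlled equation to get $\dot\xi_s=\kappa(\bar X_s,\xi_s)+(\text{diffusion coefficient})\cdot\bar u_s$ with $\bar u_s$ the mean of $\rho_s$; minimizing the lower-semicontinuous cost $\tfrac12\int|u|^2$ over $\rho_s$ with prescribed mean produces the quadratic form in $S_{t_0T}$, giving the Laplace lower bound $\liminf\ge\inf_\xi(S(\xi)+H(\xi))$. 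For the upper bound I would, given an $\eps$-near-optimal (and, after mollification, smooth) $\xi$, construct explicit nearly optimal controls of feedback form $u_1^\eps\approx\sigma^\T q^{-1}(\dot\xi-\kappa)$ (plus a corrector steering the fast motion to equilibrate correctly), and verify via the same tightness and averaging machinery that the cost converges to $S(\xi)+H(\xi)$. Finally, compact level sets of $S_{t_0T}$ follow from nondegeneracy of $q$, continuity of $\kappa$, and lower semicontinuity of the $L^2$ functional, so the Laplace principle is equivalent to the LDP for $\eta^\eps$ and the MDP for $X^\eps$ follows.
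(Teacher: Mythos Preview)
The paper does not actually prove Theorem~\ref{T:main}; it is quoted as Theorem~2.1 of \cite{MorseSpiliopoulos2017} and followed only by a remark comparing the conditions assumed here with those in that reference. There is therefore no proof in the present paper to compare your proposal against.

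That said, your outline is precisely the weak convergence methodology of \cite{DE97} that \cite{MorseSpiliopoulos2017} employs (and that the present paper adapts for its own Theorem~\ref{T:UniformlyLogEfficient}): the Bou\'e--Dupuis variational representation, the controlled $\eta^\eps$ equation, the Poisson equations \eqref{E:cell} and \eqref{E:Phi} to handle the fast-oscillating drift, tightness via the dissipativity of Condition~\ref{C:ergodic} combined with the growth restriction of Condition~\ref{C:Tightness}, occupation measures to capture the limiting fast dynamics, and a matching upper bound via nearly optimal feedback controls. The paper itself invokes exactly this structure when it writes ``following the proof of the lower bound of Theorem~2.1 in \cite{MorseSpiliopoulos2017} (Section~5.3 in \cite{MorseSpiliopoulos2017}).'' One small slip in your sketch: the control term in the $\bar\eta^\eps$ equation is $\sigma(\bar X^\eps,\bar Y^\eps)u_{1,s}$, not $\tfrac{1}{h(\eps)}\sigma u_{1,s}$, since the Girsanov shift enters the slow equation as $\sqrt\eps\,h(\eps)\,\sigma u_1$ and dividing by $\sqrt\eps\,h(\eps)$ removes the prefactor entirely; compare the fourth and fifth terms of \eqref{Eq:ControlledProcesses}.
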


\begin{remark}
The corresponding Conditions \ref{C:growth}, \ref{C:ergodic}, and \ref{C:Tightness} in \cite{MorseSpiliopoulos2017} are slightly different than the ones imposed here.

Condition 2.1 in \cite{MorseSpiliopoulos2017}  requires that both $b(x,y)$ and $c(x,y)$ be twice differentiable in $y$. In this paper, we assume the uniform in $x$ H\"{o}lder continuity property of $b$ and $c$ with respect to $y$ (part (i) of Condition \ref{C:growth} of this paper). By the results of Chapter 3 in \cite{Friedman}, we have that this condition is sufficient to guarantee that the solution to Poisson equations like (\ref{E:cell}) and (\ref{E:Phi}) is twice continuously differentiable in $y$. Together, with the rest of the regularity results on the solution to such Poisson equations (see Theorem \ref{T:regularity} for the general statement), we can then apply the It\^{o} formula to the solution of such Poisson equations.

%
%that This was required there only to guarantee  that certain Poisson equations (see Theorem \ref{T:regularity} for the general statement) that are considered in the proofs  have solutions that are twice continuously differentiable in $y$. However, as is also stated in \cite{PV03}, assuming  the uniform in $x$ H\"{o}lder continuity property of $b$ and $c$ with respect to $y$ (part (i) of Condition \ref{C:growth} of this paper), suffices to guarantee this fact.

Condition \ref{C:ergodic}  in this paper is stronger than that in \cite{MorseSpiliopoulos2017}, because here we require that $f(x,y)$ grows at most linearly in $y$. This assumption is used here in the tightness  proofs of Section \ref{SS:Tightness}. This stronger condition also implies that \eqref{E:generalSDE} has a unique strong solution, obviating the need to assume the existence of a solution, as in Condition 2.5 of \cite{MorseSpiliopoulos2017}.

Finally, the corresponding Condition \ref{C:Tightness} in \cite{MorseSpiliopoulos2017} is slightly  stronger than the one imposed here. Primarily,  this is because in \cite{MorseSpiliopoulos2017} tightness was proven making use of uniform in time bounds for the $L^{2}$ norm of the controlled moderate deviations process corresponding to \eqref{E:MDprocess}. As we  shall see in this paper, with a little  bit of extra work, one can get by with only $L^{1}$ bounds (compare (\ref{E:etaBound}) in this paper with part (i) of Lemma B.6 in \cite{MorseSpiliopoulos2017}). Hence one can weaken the requirement to the current Condition \ref{C:Tightness} at least within the setup of the present paper (consider Proposition \ref{P:tightness} in Section \ref{SS:Tightness} with $(u_{1}^{\eps},u_{2}^{\eps})=(0,0)$). In addition, one more difference in regards to this assumption is that the parameter $r$ appearing in Condition 2.3 of \cite{MorseSpiliopoulos2017} is $r=1$ here. This is consistent with the change to Condition \ref{C:ergodic}.
\end{remark}

Let us end this subsection with the observation that drove the developments of this paper. The function $\kappa(x, \eta)$ that appears in Theorem \ref{T:main} is affine in $\eta$ and the function $q(x)$ is constant in $\eta$. The dependence on $x$ does not matter here because $x=\bar{X}_{t}$ and the variable process corresponds to $\eta$. While this is expected by the nature of moderate deviations, note that  in the large deviations case, see \cite{DS12, S13}, the corresponding $\kappa$ and $q$ are nonlinear functions of the corresponding variable process $x=X^{\eps}_{t}$. The affine structure of $\kappa(x, \eta)$ is what makes the moderate deviations very appealing for the design of Monte Carlo simulation methods, as it makes the solution to the associated Hamilton-Jacobi-Bellman equation much easier to obtain and to work with.

\subsection{Generalities on importance sampling}\label{SS:GeneralitiesIS}
Let us briefly review here importance sampling. The material of this subsection is more--or--less standard, but appropriately tailored to cover our specific problem of interest.

In this paper, we are primarily interested in the estimation of quantities of the form
\begin{equation}
\theta(\eps)=\mathbb{E}\left[e^{-h^{2}(\eps)H(\eta^{\eps}_{T})} \right]%\quad\text{ or }\quad \theta(\eps)=\mathbb{P}_{x_{0},y_{0}}\left[\eta^{\eps}_{T}\in A\right]
\label{Eq:QuantitiesOfInterest3}
\end{equation}

Let $\Gamma^{\eps} (t_{0},0)$ be an unbiased estimator of \eqref{Eq:QuantitiesOfInterest3} defined on some probability space with probability measure $\tilde{\mathbb{P}}$, so that
\[
\tilde{\mathbb{E}} \Gamma^{\eps} (t_{0},0)= \mathbb{E}\left[e^{-h^{2}(\eps)H(\eta^{\eps}_{T})}\right]
\]
with $\eta^{\eps}_{t_{0}}=0$.

In order to estimate $\theta(\eps)$ via Monte Carlo, one generates many independent copies of $\Gamma^{\eps} (t_{0},0)$ and the sample mean is the estimator. Due to unbiasedness, an efficient estimator is one that has the minimum second moment, which results in the minimum possible variance. Jensen's inequality guarantees that
\[
\tilde{\mathbb{E}}\left[\Gamma^{\eps} (t_{0},0)\right]^{2}\geq \left(\tilde{\mathbb{E}} \Gamma^{\eps} (t_{0},0) \right)^{2}=\theta^{2}(\eps).
\]

In addition, by Theorem \ref{T:main}, we know that
\[
\lim_{\eps\downarrow 0}-\frac{1}{h^{2}(\eps)}\log \theta(\eps)=G(t_{0},0)
\]
where
\[
G(t_{0},0)=\inf_{\xi\in \mathcal{C}([t_{0},T]; \mathbb{R}^n),\xi_{t_{0}}=0}\left\{S_{t_{0}T}(\xi)+H(\xi_{T})\right\}.
\]

Hence, Jensen's inequality together with Theorem \ref{T:main}  immediately guarantee that
\[
\lim_{\eps\downarrow 0} -\frac{1}{h^{2}(\eps)}\log \tilde{\mathbb{E}}\left[\Gamma^{\eps} (t_{0},0)\right]^{2}\leq 2 G(t_{0},0).
\]

Therefore, logarithmic asymptotical optimality for the estimator $\Gamma^{\eps} (t_{0},0)$ will follow if we prove that
\[
\lim_{\eps\downarrow 0} -\frac{1}{h^{2}(\eps)}\log \tilde{\mathbb{E}}\left[\Gamma^{\eps} (t_{0},0)\right]^{2}\geq 2 G(t_{0},0).
\]

Let us now discuss the construction of appropriate changes of measure. For notational convenience, let us set $Z=(W,B)$ to be a $2m-$dimensional Wiener process. Consider a function $u(s,\eta,y)$ to be a vector-valued function which is sufficiently smooth and introduce the family of probability measures $\tilde{\mathbb{P}}$, via the relation
\[
\frac{d\tilde{\mathbb{P}}}{d\mathbb{P}}=\exp\left\{  -\frac
{1}{2}h^{2}(\eps)\int_{t_{0}}^{T}\left\vert u(s,\eta^{\eps}_{s},Y^{\eps}_{s})\right\vert
^{2}ds+h(\eps)\int_{t_{0}}^{T}\left\langle u(s,\eta^{\eps}_{s},Y^{\eps}_{s}),dZ_{s}\right\rangle \right\}  .
\]

Then, Girsanov's theorem says that under the measure $\tilde{\mathbb{P}}$ the process from \eqref{E:generalSDE}, denoted by $(\tilde{X}^{\eps},\tilde{Y}^{\eps})$ is the unique strong solution of the SDE
\begin{align*}
  d\tilde{X}_s^\eps &= \left[ \frac{\eps}{\delta} b(\tilde{X}_s^\eps, \tilde{Y}_s^\eps) + c(\tilde{X}_s^\eps, \tilde{Y}_s^\eps) +\sqrt{\eps}h(\eps)\sigma(\tilde{X}_s^\eps, \tilde{Y}_s^\eps)u_{1}(s) \right] \,ds + \sqrt{\eps} \sigma(\tilde{X}_s^\eps, \tilde{Y}_s^\eps) \,dW_s \\
  d\tilde{Y}_s^\eps &= \frac{1}{\delta} \left[ \frac{\eps}{\delta} f(\tilde{X}_s^\eps, \tilde{Y}_s^\eps) + g(\tilde{X}_s^\eps, \tilde{Y}_s^\eps) +\sqrt{\eps}h(\eps)\tau_{1}(\tilde{X}_s^\eps, \tilde{Y}_s^\eps)u_{1}(s)+\sqrt{\eps}h(\eps)\tau_{2}(\tilde{X}_s^\eps, \tilde{Y}_s^\eps)u_{2}(s)\right] \,ds \\
  &\quad + \frac{\sqrt{\eps}}{\delta} \left[ \tau_1(\tilde{X}_s^\eps, \tilde{Y}_s^\eps) \,dW_s + \tau_2(\tilde{X}_s^\eps, \tilde{Y}_s^\eps) \,dB_s \right]  \\
  & \tilde{X}_{t_0}^\eps = x_0, \quad \tilde{Y}_{t_0}^\eps = y_0,
\end{align*}
for $s\in(t_{0},T]$ and $(u_{1}(s),u_{2}(s))$ represent the first and second component respectively of the function (with some abuse of notation)
\[
u(s,\tilde{\eta}^{\eps}_{s},\tilde{Y}^{\eps}_{s})=\left(u_{1}(s,\tilde{\eta}^{\eps}_{s},\tilde{Y}^{\eps}_{s}),u_{2}(s,\tilde{\eta}^{\eps}_{s},\tilde{Y}^{\eps}_{s})\right)=(u_{1}(s),u_{2}(s)).
\]

Therefore, under the measure $\tilde{\mathbb{P}}$ the estimator
\[
\Gamma^{\eps}(t_{0}, 0)=\exp\left\{  -h^{2}(\eps) H(\tilde{\eta}^{\eps
}_{T})\right\}  \frac{d\mathbb{P}}{d\tilde{\mathbb{P}}}(\tilde{\eta}^{\eps
}, \tilde{Y}^{\eps}),
\]
is an unbiased estimator for $\theta(\eps)$.  The simulation performance of
this estimator is characterized by the decay rate of its second moment
\begin{equation}
Q^{\eps}(t_{0},0;u)\doteq\tilde{\mathbb{E}}\left[  \exp\left\{
-2h^{2}(\eps)H(\tilde{\eta}^{\eps}_{T})\right\}  \left(  \frac{d\mathbb{P}%
}{d\tilde{\mathbb{P}}}(\tilde{\eta}^{\eps}, \tilde{Y}^{\eps})\right)  ^{2}\right].
\label{Eq:2ndMoment1}%
\end{equation}

As with all related importance sampling methods, construction of asymptotically optimal importance sampling schemes is done by appropriately choosing the function (control) $u$ in \eqref{Eq:2ndMoment1}. The goal is to be able to control
the behavior of the second moment $Q^{\eps}(t_{0},0;u)$.

As will become clear in Theorem \ref{T:UniformlyLogEfficient}, the construction of changes of measures (or equivalently of control functions $u$) that lead to asymptotically optimal changes of measures is based on
the proof of the moderate deviations theorem \ref{T:main} and on subsolutions to appropriate Hamilton--Jacobi--Bellman type PDEs. At this point, let us recall the notion of a subsolution to an HJB equation.

By general theory, \cite{FlemingSoner2006}, and the moderate deviations principle, Theorem \ref{T:main}, we obtain that  $G(s,\eta)$ is the viscosity solution to the HJB equation
\begin{align}
\partial_{s}G(s,\eta)+\Lambda(s, \eta, \nabla_{\eta}G(s,\eta))=0,\qquad G(T,\eta)=H(\eta)\label{Eq:HJBequation2}
\end{align}
where, in our case, the Hamiltonian takes the form
\[
\Lambda(s,\eta,p)=\left<\kappa(\bar{X}_{s},\eta), p\right>-\frac{1}{2}\left\vert q^{1/2}(\bar{X}_{s})p\right\vert^{2}
\]
 with $\kappa(x,\eta),q(x)$ the coefficients defined in Theorem \ref{T:main}.

Usually, optimal or nearly optimal schemes are overly complicated and difficult to implement. In these cases,  one may choose to construct sub-optimal but simpler schemes, but with guaranteed performance. Rare events associated with multiscale problems are rather complicated
and many times is it  very difficult to construct asymptotically optimal schemes. One efficient way to circumvent this difficulty is by constructing appropriate sub-optimal schemes
with precise bounds on asymptotic performance via the subsolution approach, introduced in \cite{DupuisWang2}. Let us now recall the definition of a subsolution.

\begin{definition}
\label{Def:ClassicalSubsolution} A function
$\check{U}(s,\eta):[t_{0},T]\times \mathbb{R}^{n}\mapsto\mathbb{R}$ is a
classical subsolution to the HJB equation \eqref{Eq:HJBequation2} if
\begin{enumerate}
\item $\check{U}$ is continuously differentiable,

\item $\partial_{s}\check{U}(s,\eta)+\Lambda(s, \eta, \nabla_{\eta}\check{U}(s,\eta))\geq0$ for every
$(s,\eta)\in(t_{0},T)\times\mathbb{R}^{n}$,

\item $\check{U}(T,\eta)\leq H(\eta)$ for $\eta\in\mathbb{R}^{n}$.
\end{enumerate}
\end{definition}

For illustration purposes and in order to avoid several technical problems, we will
 impose stronger regularity conditions on the subsolutions to
be considered than those of Definition
\ref{Def:ClassicalSubsolution}.

\begin{condition}
\label{Cond:ExtraReg} There exists a subsolution $\check{U}$ which has continuous derivatives up to
order $1$ in $s$ and order $2$ in $\eta$, and the first and second
derivatives in $\eta$ are uniformly bounded.
\end{condition}

\begin{remark}
We will see in Section \ref{S:RelaxedConditions} that Condition \ref{Cond:ExtraReg} can be partially relaxed. In particular, we can allow growth in the gradient of the subsolution $\check{U}$ with respect to $\eta$. However, for presentation purposes, we present the proofs of the results in the case of Condition \ref{Cond:ExtraReg} and then in Section \ref{S:RelaxedConditions} we mention the adjustments that are needed in order to weaken this condition.
\end{remark}

\begin{remark}
For comparison purposes with the large deviations case, we refer the interested reader to \cite{DSW12}. It is clear from the form that the HJB equation takes, that even construction of subsolutions becomes a rather difficult task  in the large deviations case. In the moderate deviations regime things are simpler because of the fact that $\kappa(x,\eta)$ is affine in $\eta$ and $q(x)$ does not depend on $\eta$ at all. This is important as the variable of differentiation in the HJB equation \eqref{Eq:HJBequation2} is $\eta$. In the large deviations case the corresponding $\kappa,q$ functions depend nonlinearly on the variable of differentiation.
\end{remark}

\begin{remark}
The HJB in (\ref{Eq:HJBequation2}) will typically have solution in the viscosity sense, see the classical manuscript  \cite{FlemingSoner2006} for details. Subsolutions to (\ref{Eq:HJBequation2}) in the sense of Definition \ref{Def:ClassicalSubsolution}  are clearly not unique. For example, when $H(\eta)\geq 0$ (which is typically the case), $\check{U}=0$ will always be a subsolution. As we shall see later on using $\check{U}=0$ corresponds to the naive Monte Carlo importance sampling algorithm.  However, subsolutions that lead to better importance sampling schemes typically exist and this is what we explore in this paper. See also  \cite{DupuisWang2} for a more general discussion on the topic of construction of subsolutions.
\end{remark}
\section{Statement and proof of the main result}\label{S:MainResult}

Let us now present the main result of this paper on logarithmic asymptotically optimal changes of measure for multiscale small noise diffusions.
\begin{theorem}
\label{T:UniformlyLogEfficient} Let $\{\left(X^{\eps}_{s}, Y^{\eps}_{s} \right),\eps>0\}$ be
the solution to \eqref{E:generalSDE} for $s\in[t_{0},T]$ with initial point $(x_{0}, y_{0})$ at time $t_{0}$.  Consider the moderate deviations process $\eta^{\eps}_{t}$ defined by \eqref{E:MDprocess}. Consider a non-negative, bounded and continuous
function $H:\mathbb{R}^{n}\mapsto\mathbb{R}$ and let $\check{U}(s,\eta)$ be a subsolution to the associated HJB equation \eqref{Eq:HJBequation2} according to Definition \ref{Def:ClassicalSubsolution}. Assume Conditions
\ref{C:growth}, \ref{C:ergodic}, \ref{C:Tightness}, \ref{C:center}, and \ref{Cond:ExtraReg}. Define the feedback control $u(s,\eta,y)=\left(u_{1}(s,\eta,y), u_{2}(s,\eta,y)\right)$ by
\begin{equation}
u(s,\eta,y)=\left(-\alpha_{1}^{T}(\bar{X}_{s},y)\nabla_{\eta}\check{U}(s,\eta), -\alpha_{2}^{T}(\bar{X}_{s},y)\nabla_{\eta}\check{U}(s,\eta)\right)  \label{Eq:feedback_controlReg1}
\end{equation}
with $\alpha_{1}(x,y),\alpha_{2}(x,y)$ defined in Theorem \ref{T:main}. Then, we have that
\begin{equation*}
\liminf_{\eps\rightarrow0}-\frac{1}{h^{2}(\eps)}\log Q^{\eps}(t_{0},0;u(\cdot))\geq
G(t_{0},0)+\check{U}(t_{0},0). % \label{Eq:GoalSubsolution}%
\end{equation*}
\end{theorem}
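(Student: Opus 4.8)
The plan is to use the weak convergence (Laplace principle) approach of \cite{DE97}. The starting point is the variational representation
\[
-\frac{1}{h^{2}(\eps)}\log Q^{\eps}(t_{0},0;u) = \inf_{v=(v_{1},v_{2})}\tilde{\mathbb{E}}\left[\frac{1}{2}\int_{t_{0}}^{T}\bigl(|v_{1}(s)|^{2}+|v_{2}(s)|^{2}\bigr)\,ds + 2H(\bar{\eta}^{\eps}_{T}) + F^{\eps}\right],
\]
where the infimum is over controls $v$ in the Girsanov sense, $(\bar{\eta}^{\eps},\bar{Y}^{\eps})$ is the corresponding controlled pair, and $F^{\eps}$ collects the cost coming from the feedback control $u$ that enters through $d\mathbb{P}/d\tilde{\mathbb{P}}$. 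Writing out the Radon--Nikodym derivative explicitly, $-\tfrac{1}{h^{2}(\eps)}\log(d\mathbb{P}/d\tilde{\mathbb{P}})^{2}$ contributes a term of the form $\int_{t_{0}}^{T}\bigl(|u(s,\bar\eta^\eps_s,\bar Y^\eps_s)|^{2}+2\langle u, v\rangle\bigr)\,ds$ up to a martingale term that vanishes in expectation. Completing the square, $\tfrac12|v|^2 + |u|^2 + 2\langle u,v\rangle = \tfrac12|v+2u|^2 - |u|^2$; after the change of variables $w = v + 2u$ the representation becomes a control problem whose running cost is $\tfrac12|w|^2 - |u(s,\bar\eta^\eps_s,\bar Y^\eps_s)|^2$ and whose controlled dynamics for $\bar\eta^\eps$ are driven by $w - u$ (the $-u$ being the feedback).

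The main work is then a lower bound for the value of this controlled problem, obtained by passing to the limit $\eps\downarrow 0$. First I would establish tightness of the family $\{\bar\eta^\eps\}$ together with the occupation measures of the fast process $\bar Y^\eps$ and of the controls; this is exactly the tightness machinery developed in Section \ref{SS:Tightness} (Proposition \ref{P:tightness}), and it is here that Conditions \ref{C:growth}, \ref{C:ergodic}, \ref{C:Tightness}, \ref{C:center} are used, in particular the balance $q_b+q_\sigma, q_c+q_\sigma<1$ that controls the polynomial growth of the coefficients against the moments of the unbounded fast motion. Along any convergent subsequence, the averaging principle identifies the limit: the fast-variable occupation measure converges to $\mu_{i,\bar X_s}(dy)$, the controlled $\bar\eta^\eps$ converges to some absolutely continuous limit $\eta$, and — using the feedback form \eqref{Eq:feedback_controlReg1} of $u$ together with the Poisson-equation corrector terms $\chi$ and $\Phi_i$ — the limiting dynamics read $\dot\eta_s = \kappa(\bar X_s,\eta_s) + q(\bar X_s)\bigl(\cdot\bigr) - q(\bar X_s)\nabla_\eta\check U(s,\eta_s)$, with the limiting running cost of the $w$-part being $\tfrac12\langle \bar w_s, q^{-1}(\bar X_s)\bar w_s\rangle$ for an appropriate averaged control $\bar w_s$, while $\int |u|^2$ averages to $\int_{t_0}^T \langle \nabla_\eta\check U(s,\eta_s), q(\bar X_s)\nabla_\eta\check U(s,\eta_s)\rangle\,ds$. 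The key algebraic identity is that $\int_\mathcal{Y}(\alpha_1\alpha_1^\T+\alpha_2\alpha_2^\T)(\bar X_s,y)\,\mu_{i,\bar X_s}(dy) = q(\bar X_s)$, so the feedback cost collapses exactly to the quadratic form governed by $q$.

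With the limit problem in hand, the lower bound follows from the subsolution property. Denoting by $\eta$ the limiting controlled path and by $\bar w$ the limiting control, Fatou's lemma gives
\[
\liminf_{\eps\downarrow0}-\frac{1}{h^{2}(\eps)}\log Q^{\eps}(t_{0},0;u)\ \geq\ \inf\ \left\{\frac{1}{2}\int_{t_{0}}^{T}\langle \bar w_s, q^{-1}(\bar X_s)\bar w_s\rangle\,ds - \int_{t_0}^T\langle\nabla_\eta\check U, q\nabla_\eta\check U\rangle\,ds + 2H(\eta_T)\right\},
\]
the infimum over admissible $(\eta,\bar w)$ with $\dot\eta_s = \kappa(\bar X_s,\eta_s) + \bar w_s - q(\bar X_s)\nabla_\eta\check U(s,\eta_s)$, $\eta_{t_0}=0$. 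Now apply the chain rule to $s\mapsto \check U(s,\eta_s)$ along this path: using $\dot{\check U} = \partial_s\check U + \langle\nabla_\eta\check U,\dot\eta\rangle$, substituting the dynamics, completing the square in $\bar w$, and invoking the subsolution inequality $\partial_s\check U + \langle\kappa,\nabla_\eta\check U\rangle - \tfrac12\langle\nabla_\eta\check U, q\nabla_\eta\check U\rangle\geq 0$, one obtains that the integrand in the cost above is bounded below by $-\tfrac{d}{ds}\check U(s,\eta_s) + \tfrac12\langle\dot\eta_s-\kappa(\bar X_s,\eta_s), q^{-1}(\bar\eta_s)(\dot\eta_s-\kappa(\bar X_s,\eta_s))\rangle$ pointwise; integrating and using $\check U(T,\eta_T)\leq H(\eta_T)$ and the telescoping of the first term yields the cost $\geq S_{t_0T}(\eta) + 2H(\eta_T) - 2H(\eta_T) + \ldots$ — more carefully, $\geq \check U(t_0,0) - \check U(T,\eta_T) + S_{t_0T}(\eta) + 2H(\eta_T) \geq \check U(t_0,0) + S_{t_0T}(\eta) + H(\eta_T) \geq \check U(t_0,0) + G(t_0,0)$, where the last step is the definition of $G(t_0,0)$ as the infimum of $S_{t_0T}(\xi)+H(\xi_T)$. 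Taking the infimum over $(\eta,\bar w)$ preserves the bound, which is the claim.

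The step I expect to be the genuine obstacle is the tightness and averaging under the change of measure: one must show that the controlled fast process $\bar Y^\eps$ does not escape to infinity too fast despite the added drift $\sqrt\eps h(\eps)(\tau_1 u_1 + \tau_2 u_2)$ and the $w$-control, so that the unbounded-$y$ moment estimates needed to average the polynomially growing coefficients $b,c,\sigma$ survive. This is precisely why Condition \ref{C:Tightness} is imposed, why the linear-growth strengthening of Condition \ref{C:ergodic} is needed, and why — as the remark after Theorem \ref{T:main} notes — one must work with $L^1$ rather than $L^2$ bounds on $\bar\eta^\eps$; carrying out this estimate rigorously, uniformly in the control, is the technical heart of the argument, and the rest (the completion-of-squares bookkeeping and the subsolution chain-rule computation) is essentially routine once it is in place.
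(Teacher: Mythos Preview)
Your proposal is essentially the paper's proof: the variational representation with running cost $\tfrac12|w|^2-|u|^2$ and dynamics driven by $w-u$ (which the paper cites directly as Lemma~4.3 of \cite{DSW12}), tightness of the controlled moderate-deviations process via Proposition~\ref{P:tightness}, passage to the averaged limit using the lower-bound machinery of \cite{MorseSpiliopoulos2017}, and then the chain-rule/subsolution computation to extract $G(t_0,0)+\check U(t_0,0)$. One caveat: in your limiting dynamics the feedback contribution should appear with sign $+q(\bar X_s)\nabla_\eta\check U$, not $-q(\bar X_s)\nabla_\eta\check U$, since $-u$ enters the drift as $+\alpha_j\alpha_j^{\T}\nabla_\eta\check U$ and averages to $+q\nabla_\eta\check U$; with that correction (and noting that no further ``completion of the square in $\bar w$'' is needed once you substitute $\bar w=\dot\eta-\kappa-q\nabla_\eta\check U$) the final inequality goes through exactly as you and the paper write it.
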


%Based on Theorem  \ref{T:UniformlyLogEfficient}, we can then establish Proposition \ref{P:UniformlyLogEfficient1}, which is about estimating probabilities of the form $\mathbb{P}_{t_{0},0}[\eta^{\eps}_{T}\in A]$, see also \cite{DupuisWang2} for an approximating argument to this regard.  The details of the proof are omitted since the argument is the same.
%\begin{proposition}
%\label{P:UniformlyLogEfficient1} Consider the setup of Theorem \ref{T:UniformlyLogEfficient}. Let $A\subset\mathbb{R}^{n}$ be a regular set with respect to the action functional $S$ and the  point $(t_{0},x_{0},y_{0})$, i.e., the infimum of $S$ over the closure $\bar{A}$ is the same as
%the infimum over the interior $A^{o}$.
%Let
%\[
%H(\eta)=%
%\begin{cases}
%0 & \text{if }\eta\in A\\
%+\infty & \text{if }\eta\notin A.
%\end{cases}
%\]
%Let $u(s,\eta,y)$  be defined as in Theorem \ref{T:UniformlyLogEfficient}. Then,  we have that \eqref{Eq:GoalSubsolution} holds.
%\end{proposition}

In order to make clear how  subsolutions quantify performance, we make the following remark.
\begin{remark}\label{R:SubsolutionPerformance}
The subsolution property of $\check{U}$ implies  that $0\leq \check{U}(s,\eta)\leq
G(s,\eta)$ everywhere. Hence,  the scheme is logarithmic asymptotically optimal if $\check{U}(t_{0},0)=G(t_{0},0)$ at the starting point $(t_{0},0)$. Naive Monte Carlo corresponds to choosing the subsolution $\check{U}=0$. Therefore,  any subsolution scheme with
$$0\ll\check{U}(t_{0},0)\leq G(t_{0},0) $$
will outperform naive Monte Carlo measured by how close to $G$ the value of $\check{U}$  at the initial point $(t_{0},0)$ is.
\end{remark}

\begin{proof}[Proof of Theorem \ref{T:UniformlyLogEfficient}]
Let us now prove Theorem \ref{T:UniformlyLogEfficient}. In this subsection we present the main argument of the proof. For the sake of presentation necessary technical lemmas will be used here but proven later on.

In addition, we will use the notation of Theorem \ref{T:main} and omit distinguishing between the two different regimes because given the definitions in Theorem \ref{T:main} there is no difference in the proof.

Let us recall the definition
\begin{equation*}
u(s,\eta,y)=\left(-\alpha_{1}^{T}(\bar{X}_{s},y)\nabla_{\eta}\check{U}(s,\eta), -\alpha_{2}^{T}(\bar{X}_{s},y)\nabla_{\eta}\check{U}(s,\eta)\right)
\end{equation*}
with $\alpha_{1}(x,y),\alpha_{2}(x,y)$ defined in Theorem \ref{T:main}. Then, for $s\in(t_{0},T]$ and $v(\cdot)=(v_{1}(\cdot),v_{2}(\cdot))\in\mathcal{A}$, where
\[
\mathcal{A}=\left\{v(\cdot)=(v_{1}(\cdot),v_{2}(\cdot)): v \text{ is a }\mathscr{F}\text{-progressively measurable  process satisfying } \mathbb{E}\int_{t_0}^{T}\left| v(s)\right|^{2}ds<\infty\right\},
\]
define the process
 $(\hat{X}^{\eps},\hat{Y}^{\eps})$ as the unique strong solution of the SDE
\begin{align}
  d\hat{X}_s^\eps &= \left[ \frac{\eps}{\delta} b(\hat{X}_s^\eps, \hat{Y}_s^\eps) + c(\hat{X}_s^\eps, \hat{Y}_s^\eps) +\sqrt{\eps}h(\eps)\sigma(\hat{X}_s^\eps, \hat{Y}_s^\eps)\left(v_{1}(s)-u_{1}(s)\right) \right] \,ds + \sqrt{\eps} \sigma(\hat{X}_s^\eps, \hat{Y}_s^\eps) \,dW_s \notag \\
  d\hat{Y}_s^\eps &= \frac{1}{\delta} \left[ \frac{\eps}{\delta} f(\hat{X}_s^\eps, \hat{Y}_s^\eps) + g(\hat{X}_s^\eps, \hat{Y}_s^\eps) +\sqrt{\eps}h(\eps)\tau_{1}(\hat{X}_s^\eps, \hat{Y}_s^\eps)\left(v_{1}(s)-u_{1}(s)\right)+\sqrt{\eps}h(\eps)\tau_{2}(\hat{X}_s^\eps, \hat{Y}_s^\eps)\left(v_{2}(s)-u_{2}(s)\right)\right] \,ds \notag \\
  &\quad + \frac{\sqrt{\eps}}{\delta} \left[ \tau_1(\hat{X}_s^\eps, \hat{Y}_s^\eps) \,dW_s + \tau_2(\hat{X}_s^\eps, \hat{Y}_s^\eps) \,dB_s \right]  \notag \\
  & \hat{X}_{t_0}^\eps = x_0, \quad \hat{Y}_{t_0}^\eps = y_0. \label{E:generalControlledSDE}
\end{align}

Here $u(s)=u(s,\hat{\eta}_s^\eps,\hat{Y}_s^\eps)$, where $\hat{\eta}_s^\eps$ is the controlled moderate deviations process
\begin{align}
\hat{\eta}^{\eps}_{s}=\frac{\hat{X}_s^\eps-\bar{X}_{s}}{\sqrt{\eps}h(\eps)}.\label{Eq:HatModerateDeviationsProcess}
\end{align}

Then, under the imposed assumptions,  Lemma 4.3 of \cite{DSW12} guarantees the validity of the following representation
\begin{align}
-\frac{1}{h^{2}(\eps)}\log Q^{\eps}(t_{0},0;u)  =\inf_{v\in\mathcal{A}}\mathbb{E}\left[  \frac{1}{2}\int_{t_{0}}^{T}\left|
v(s)\right| ^{2}ds-\int_{t_{0}}^{T}| u(s,\hat{\eta}^{\eps}_{s},\hat{Y}^{\eps}
_{s})|^{2}ds+2H(\hat{\eta}^{\eps}_{T})\right]. \label{Eq:ToBeBounded}
\end{align}

The next step is to take $\eps\downarrow 0$. We may assume that
\begin{equation*}
  \int_0^T \lvert v(s) \rvert^2 \,ds < R, \text{ almost surely},
\end{equation*}
for some large enough constant $R<\infty$ that does not depend on $\eps$ or $\delta$,
see Lemma \ref{L:uBound}.
By Proposition \ref{P:tightness} the family $\{\hat{\eta}^{\eps}\}_{\eps>0}$ is tight on $\mathcal{C}([t_{0},T];\mathbb{R}^{n})$. Then, under the boundedness assumption of Condition \ref{Cond:ExtraReg}, the moderate deviations computations of \cite{MorseSpiliopoulos2017} go through almost verbatim (albeit a superficial difference due to the dependence of $u(s,\eta,y)$ on $s$ and $\eta$). In particular, with the definitions of Theorem \ref{T:main} for each one of the two regimes in place, let us set
\begin{align}
\tilde{\kappa}(s,x,\eta)&=\kappa(x,\eta)-\int_{\mathcal{Y}}\alpha_{1}(x,y)u_{1}(s,\eta,y)\mu_{x}(dy)-\int_{\mathcal{Y}}\alpha_{2}(x,y)u_{2}(s,\eta,y)\mu_{x}(dy)\nonumber\\
&=\kappa(x,\eta)+\int_{\mathcal{Y}}\left[\left(\alpha_{1}\alpha_{1}^{\top}+\alpha_{2}\alpha_{2}^{\top}\right)(x,y)\nabla_{\eta}\check{U}(s,\eta)\right]\mu_{x}(dy)\nonumber\\
&=\kappa(x,\eta)+q(x)\nabla_{\eta}\check{U}(s,\eta),\nonumber
\end{align}
and compute
\begin{align}
\tilde{S}_{t_{0}T}(\xi)&=\frac{1}{2} \int_{t_{0}}^{T} \left\langle \dot{\xi}_s - \tilde{\kappa} \left(s,\bar{X}_s, \xi_{s} \right) , q^{-1}(\bar{X}_s) \left(\dot{\xi}_s - \tilde{\kappa} \left(s,\bar{X}_s, \xi_{s} \right) \right) \right\rangle ds\nonumber\\
&=S_{t_{0}T}(\xi)- \int_{t_{0}}^{T} \left<\dot{\xi}_s - \kappa \left(\bar{X}_s, \xi_{s} \right), \nabla_{\eta}\check{U}(s,\xi_{s})\right>  ds+\frac{1}{2}\int_{t_{0}}^{T}\left<\nabla_{\eta}\check{U}(s,\xi_{s}), q(\bar{X}_{s})\nabla_{\eta}\check{U}(s,\xi_{s})\right>ds,\nonumber
\end{align}
and
\begin{align}
\int_{t_{0}}^{T}\int_{\mathcal{Y}}\left| u(s,\xi_{s},y)\right|^{2}\mu_{\bar{X}_{s}}(dy)ds&=
\int_{t_{0}}^{T}\left<\nabla_{\eta}\check{U}(s,\xi_{s}), q(\bar{X}_{s})\nabla_{\eta}\check{U}(s,\xi_{s})\right>ds.\nonumber
\end{align}

Thus, by following the proof of the lower bound of Theorem 2.1 in \cite{MorseSpiliopoulos2017} (Section 5.3 in \cite{MorseSpiliopoulos2017}) and making use of the previous displays we have the bound
\begin{align}
&\liminf_{\eps\rightarrow0}-\frac{1}{h^{2}(\eps)}\log Q^{\eps}(t_{0},0;u)\geq \inf_{\xi\in\mathcal{C}([t_{0},T];\mathbb{R}^{n}),\xi_{t_{0}}=0}\left[ \tilde{S}_{t_{0}T}(\xi)-\int_{t_{0}}^{T}\int_{\mathcal{Y}}\left| u(s,\xi_{s},y)\right|^{2}\mu_{\xi_{s}}(dy)ds+2H(\xi_{T})\right]\nonumber\\
&=\inf_{\xi\in\mathcal{C}([t_{0},T];\mathbb{R}^{n}),\xi_{t_{0}}=0}\left[ S_{t_{0}T}(\xi)- \int_{t_{0}}^{T} \left(\left<\dot{\xi}_s - \kappa \left(\bar{X}_s, \xi_{s} \right), \nabla_{\eta}\check{U}(s,\xi_{s})\right> +\frac{1}{2}\left<\nabla_{\eta}\check{U}(s,\xi_{s}), q(\bar{X}_{s})\nabla_{\eta}\check{U}(s,\xi_{s})\right> \right) ds\right.\nonumber\\
&\hspace{5cm}\left.
\vphantom{\int_{t_0}^T}+2H(\xi_{T})\right].\label{Eq:ToBeBoundedAfterLimit}
\end{align}

Now, using the fact that $\check{U}(s,\eta)$ satisfies the subsolution property we get
\begin{align}
\frac{d}{ds}\check{U}(s,\xi_{s})&=\partial_{s}\check{U}(s,\xi_{s})+\left<\nabla_{\eta}\check{U}(s,\xi_{s}),\dot{\xi}_{s}\right>\nonumber\\
&\geq \left<\dot{\xi}_s - \kappa \left(\bar{X}_s, \xi_{s} \right), \nabla_{\eta}\check{U}(s,\xi_{s})\right> +\frac{1}{2}\left<\nabla_{\eta}\check{U}(s,\xi_{s}), q(\bar{X}_{s})\nabla_{\eta}\check{U}(s,\xi_{s})\right>\nonumber
\end{align}
or, after integrating,
\begin{align}
\check{U}(T,\xi_{T})-\check{U}(t_{0},\xi_{t_{0}})&\geq \int_{t_{0}}^{T}\left(\left<\dot{\xi}_s - \kappa \left(\bar{X}_s, \xi_{s} \right), \nabla_{\eta}\check{U}(s,\xi_{s})\right> +\frac{1}{2}\left<\nabla_{\eta}\check{U}(s,\xi_{s}), q(\bar{X}_{s})\nabla_{\eta}\check{U}(s,\xi_{s})\right>\right)ds,\nonumber
\end{align}
and, after using again the subsolution property (the terminal condition this time), we get
\begin{align}
H(\xi_{T})-\bar{U}(t_{0},\xi_{t_{0}})&\geq \int_{t_{0}}^{T}\left(\left<\dot{\xi}_s - \kappa \left(\bar{X}_s, \xi_{s} \right), \nabla_{\eta}\bar{U}(s,\xi_{s})\right> +\frac{1}{2}\left<\nabla_{\eta}\bar{U}(s,\xi_{s}), q(\bar{X}_{s})\nabla_{\eta}\bar{U}(s,\xi_{s})\right>\right)ds.\nonumber
\end{align}

Then, finally, inserting the last display into \eqref{Eq:ToBeBoundedAfterLimit} gives the bound
\begin{align}
\liminf_{\eps\rightarrow0}-\frac{1}{h^{2}(\eps)}\log Q^{\eps}(t_{0},0;u)
&\ge \inf_{\xi\in\mathcal{C}([t_{0},T];\mathbb{R}^{n}),\xi_{t_{0}}=0}\left[ S_{t_{0}T}(\xi)+H(\xi_{T})+\check{U}(t_{0},\xi_{t_{0}})\right]\nonumber\\
&=G(t_{0},0)+\check{U}(t_{0},0),\nonumber
\end{align}
concluding the proof of the theorem.
\end{proof}

\subsection{Tightness of $\{\hat{\eta}^{\eps}\}_{\eps>0}$ on $\mathcal{C}([t_{0},T];\mathbb{R}^{n})$}\label{SS:Tightness}
In this section we prove tightness of the family $\{\hat{\eta}^{\eps}\}_{\eps>0}$. %The proof follows along the lines of the corresponding proof in \cite{MorseSpiliopoulos2017}, the main difference being the presence of the term $u(s)=(u_{1}(s),u_{2}(s))$ in the $(\hat{X}^{\eps},\hat{Y}^{\eps})$ process. We present the main argument below emphasizing the differences.
Sometimes, we will write $(\hat{X}^{\eps,v^{\eps}},\hat{Y}^{\eps,v^{\eps}})$ in order to emphasize the dependence on the control term.

Let us first establish the main estimates that need to be established.

\begin{lemma} \label{L:YIntegralgrowth}
Assume that Conditions~\ref{C:growth}, \ref{C:ergodic}, \ref{C:Tightness}, and \ref{Cond:ExtraReg} hold. Consider any family $\{v^{\eps},\eps>0\}$ of
controls in $\mathcal{A}$ satisfying, for some $R<\infty$,
\begin{equation*}
    \sup_{\eps > 0} \int_{t_{0}}^T | v^{\eps}(s) |^2 d s < R
\end{equation*}
almost surely. Then there exist $\eps_{0}>0$ small enough such that
 \begin{equation*}
    \sup_{\eps\in(0,\eps_{0})}\mathbb{E}  \int_{t_{0}}^{T} | \hat{Y}_s^{\eps, v^{\eps}}|^{2} d s  \le K(R,T),
\end{equation*}
and
\begin{equation*}
\frac{\delta^{2}}{\eps}\mathbb{E}\left(\sup_{t\in[t_{0},T]}\left|\hat{Y}_t^{\eps, v^{\eps}}\right|^{2}\right)\leq K(R,T)\left(1+\frac{\delta}{\sqrt{\eps}}h(\eps)\right)
\end{equation*}
for some finite constant $K(R,T)$ that may depend on $(R,T)$, but not on $\eps,\delta$. %Here $q_{1}=q_{\sigma}\vee q_{b}$ in the case of Regime 1 and $q_{1}=q_{\sigma}\vee q_{b}\vee q_{c}$ in the case of Regime 2.
\end{lemma}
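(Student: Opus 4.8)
The proof will rest on an Itô/energy estimate for the controlled fast process $\hat Y^{\eps,v^\eps}$, using the dissipativity structure supplied by Condition~\ref{C:ergodic}(ii). First I would write down the SDE satisfied by $\hat Y^{\eps,v^\eps}$ from \eqref{E:generalControlledSDE}, rescale time appropriately so that the generator becomes $\opL_{i,x}$ plus lower-order terms, and apply Itô's formula to $|\hat Y^{\eps,v^\eps}_t|^2$. The drift of the leading term produces $\tfrac{2}{\delta}\langle \hat Y^\eps_s, \tfrac{\eps}{\delta}f + g\rangle$, and Condition~\ref{C:ergodic}(ii) rewrites $\tfrac{\eps}{\delta}f+g$ (Regime 2; in Regime 1 it is just $\tfrac{\eps}{\delta}f$) as $-\Gamma y+\zeta(x,y)$, which together with $\langle(\Gamma-L_\zeta I)\xi,\xi\rangle\ge\gamma_0|\xi|^2$ and the linear growth of $\zeta$ gives a bound of the form $-\tfrac{c}{\delta}|\hat Y^\eps_s|^2 + \tfrac{C}{\delta}$ on the leading drift contribution (after absorbing the $\zeta$ cross-term into the coercive part and a constant). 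The quadratic-variation term contributes $\tfrac{\eps}{\delta^2}\operatorname{Tr}(\tau_1\tau_1^\T+\tau_2\tau_2^\T)$, which by boundedness (Condition~\ref{C:ergodic}(i)) is $O(\eps/\delta^2)$.}

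\textit{}

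The remaining terms are the control perturbation and the martingale. The control term is $\tfrac{2\sqrt\eps h(\eps)}{\delta^2}\langle \hat Y^\eps_s, \tau_1(v_1-u_1)+\tau_2(v_2-u_2)\rangle$; using boundedness of $\tau_1,\tau_2$, boundedness of $\nabla_\eta\check U$ (Condition~\ref{Cond:ExtraReg}) so that $|u(s)|$ is bounded, and Young's inequality $2\langle y, a\rangle \le \theta|y|^2 + \theta^{-1}|a|^2$ with a small $\theta$, I split off a term $\tfrac{\theta c}{\delta}|\hat Y^\eps_s|^2$ absorbed into the coercive bound and a term $\tfrac{C}{\delta^2}\sqrt\eps h(\eps)(1+|v^\eps(s)|^2)$; the time integral of $|v^\eps(s)|^2$ is bounded by $R$ a.s. Taking expectations kills the martingale; one then arrives at a Gronwall-type inequality for $m(t)=\E|\hat Y^{\eps,v^\eps}_t|^2$ of the shape $m(t)\le m(t_0)+\int_{t_0}^t(-\tfrac{c'}{\delta}m(s)+\tfrac{C}{\delta}+\tfrac{C\eps}{\delta^2}+\tfrac{C\sqrt\eps h(\eps)}{\delta^2})\,ds + \tfrac{C\sqrt\eps h(\eps)}{\delta^2}R$. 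Solving this (or integrating directly, since the dissipative coefficient $c'/\delta$ is large) yields $\sup_{t\in[t_0,T]}m(t)\le C(1 + \tfrac{\eps}{\delta} + \tfrac{\sqrt\eps h(\eps)}{\delta} + \tfrac{\sqrt\eps h(\eps)}{\delta}\cdot\tfrac{R}{\delta}\cdot\delta)$; the point is that each "bad" factor of $1/\delta$ from the noise scaling is compensated by the $1/\delta$ in the dissipation, and the regime assumptions \eqref{Eq:Regimes} ($\eps/\delta$ bounded or $\to\gamma$) plus $\sqrt\eps h(\eps)\to0$ keep everything bounded for $\eps<\eps_0$. Integrating $m(t)$ over $[t_0,T]$ then gives the first claimed bound $\sup_{\eps\in(0,\eps_0)}\E\int_{t_0}^T|\hat Y^{\eps,v^\eps}_s|^2\,ds\le K(R,T)$; in fact the dissipative term gives directly $\tfrac{c'}{\delta}\E\int_{t_0}^T m(s)\,ds \le$ (bounded), i.e. the time integral is $O(\delta)$-small, which is even stronger than needed.

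\textit{}

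For the second estimate, the supremum $\E(\sup_{t\in[t_0,T]}|\hat Y^{\eps,v^\eps}_t|^2)$, I would return to the Itô expansion of $|\hat Y^\eps_t|^2$, take the supremum over $t$ before taking expectations, and control the martingale $M_t = \tfrac{2\sqrt\eps}{\delta}\int_{t_0}^t\langle\hat Y^\eps_s,\tau_1\,dW_s+\tau_2\,dB_s\rangle$ by the Burkholder--Davis--Gundy inequality: $\E\sup_t|M_t|\le C\tfrac{\sqrt\eps}{\delta}\E(\int_{t_0}^T|\hat Y^\eps_s|^2\operatorname{Tr}(\tau\tau^\T)\,ds)^{1/2}\le C\tfrac{\sqrt\eps}{\delta}(\E\int_{t_0}^T|\hat Y^\eps_s|^2\,ds)^{1/2}$, and the integral inside is $O(\delta)$ by the first part, so $\E\sup_t|M_t| = O(\sqrt\eps/\sqrt\delta)\cdot O(1)$. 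Multiplying through by the prefactor $\delta^2/\eps$ demanded in the statement, the dissipative drift term, the $O(\eps/\delta^2)$ trace term, the control term (of order $\sqrt\eps h(\eps)/\delta^2$ times $\int|v^\eps|^2 \le R$), and the BDG bound all collapse to quantities of order $1 + \tfrac{\delta}{\sqrt\eps}h(\eps)$ after the bookkeeping, giving the stated inequality. The main obstacle is the bookkeeping of the competing powers of $\eps$ and $\delta$ in the control-perturbation and noise terms and verifying, case by case for Regimes~1 and~2, that the dissipativity constant $\gamma_0$ (absorbing the $\zeta$ cross-term and the small Young parameter $\theta$) stays strictly positive so that the Gronwall argument actually produces $\eps$-uniform bounds rather than bounds that blow up as $\delta\downarrow 0$; the linear-growth restriction on $f$ (and on $\zeta$) imposed in Condition~\ref{C:ergodic} is exactly what makes the cross-terms absorbable, and that is where the stronger assumption of this paper (versus \cite{MorseSpiliopoulos2017}) is used.
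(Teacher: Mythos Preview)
Your overall strategy---an energy estimate on $|\hat Y^{\eps,v^\eps}_t|^2$, dissipativity from Condition~\ref{C:ergodic}(ii), Young's inequality, Gronwall, and BDG for the supremum---is sound in outline, and for the \emph{second} statement it is essentially what the paper does. But there is a genuine gap in your treatment of the control term, and your route to the \emph{first} statement differs from the paper's.

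\textbf{The gap: $u$ is not bounded.} You write that ``boundedness of $\nabla_\eta\check U$ (Condition~\ref{Cond:ExtraReg}) so that $|u(s)|$ is bounded.'' This is false: recall from \eqref{Eq:feedback_controlReg1} that $u_i(s,\eta,y)=-\alpha_i^\top(\bar X_s,y)\nabla_\eta\check U(s,\eta)$, and $\alpha_1,\alpha_2$ depend on $y$ through $\sigma$, $\nabla_y\chi$ (or $\nabla_y\Phi_2$) and $\tau_i$. By Theorem~\ref{T:regularity}(iii) and Condition~\ref{C:growth}, $|\alpha_i(x,y)|\le K(1+|y|^{q})$ with $q=q_\sigma\vee q_b<1$ (Regime~1) or $q=q_\sigma\vee q_b\vee q_c<1$ (Regime~2). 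Hence $|u(s)|\le C(1+|\hat Y^{\eps}_s|^{q})$, not a constant. This feeds $|\hat Y^\eps_s|$ back into the right-hand side of your differential inequality: the control cross-term $\tfrac{\sqrt\eps h(\eps)}{\delta}\langle \hat Y^\eps_s,\tau_i u_i\rangle$ produces a contribution of order $\tfrac{\sqrt\eps h(\eps)}{\delta}|\hat Y^\eps_s|^{1+q}$, which must itself be absorbed into the dissipative term. That is possible (since $\tfrac{\delta}{\sqrt\eps}h(\eps)=\tfrac{\delta}{\eps}\cdot\sqrt\eps h(\eps)\to 0$ in both regimes), but you have not done it, and it is precisely where Condition~\ref{C:Tightness} enters. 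Without this step your Gronwall inequality does not close.

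\textbf{Different route for the first statement.} The paper does not start from It\^o on $|\hat Y^\eps_t|^2$. Instead it applies the variation-of-constants (Duhamel) formula to the linear part $-\tfrac{\eps}{\delta^2}\Gamma\hat Y^\eps_t$, writing $\hat Y^\eps_t=\Delta^\eps_t+Z^\eps_t+M^\eps_t$ where $M^\eps_t$ is a stochastic convolution, $Z^\eps_t$ carries the control, and $\Delta^\eps_t$ solves a pathwise ODE. Each piece is then bounded in $L^2([0,T])$ using Young's \emph{convolution} inequality against the kernel $e^{-\tfrac{\eps}{\delta^2}\gamma_0 t}$. The $y$-growth of $u$ surfaces in the $Z^\eps$ bound as a term $\tfrac{\delta^2}{\eps}h^2(\eps)\sup_s\|\tau\|^2\int_0^T|\hat Y^\eps_s|^2\,ds$ on the right-hand side of the estimate for $\E\int_0^T|\hat Y^\eps_s|^2\,ds$, and is absorbed into the left by taking $\eps$ small enough that this coefficient is below $1/2$. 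This decomposition isolates the feedback cleanly and avoids the pointwise Gronwall bookkeeping you sketch; your direct approach can be made to work, but only after you fix the boundedness-of-$u$ error and track the resulting extra powers of $|\hat Y^\eps_s|$.

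A minor point: several of your $\delta$-exponents are off (the control term in $d|\hat Y^\eps|^2$ carries $\tfrac{\sqrt\eps h(\eps)}{\delta}$, not $\tfrac{\sqrt\eps h(\eps)}{\delta^2}$; the dissipative rate is $\tfrac{\eps}{\delta^2}$ in Regime~1, not $\tfrac{1}{\delta}$). These matter for the final balance.
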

\begin{proof}[Proof of Lemma \ref{L:YIntegralgrowth}]
To simplify notation, we set (without loss of generality) $g(x,y)=\tau_{2}(x,y)=0$,  $t_{0}=0$ and rename $\tau_{1}=\tau$. By Condition \ref{C:ergodic}, we can write that
\[
f(x,y)=-\Gamma y+\zeta(x,y)
\]
such that $\zeta(x,y)$ is globally Lipschitz in $y$, uniformly bounded in $x$, with Lipschitz constant $L_\zeta$ such that $\Gamma-L_{\zeta} I>0$. Then we can write
\begin{align}
\hat{Y}_t^\eps &= y_{0}+ \int_{0}^{t}\left[-\frac{\eps}{\delta^{2}} \Gamma \hat{Y}_s^\eps + \frac{\eps}{\delta^{2}} \zeta(\hat{X}_s^\eps, \hat{Y}_s^\eps) +\frac{\sqrt{\eps}h(\eps)}{\delta}\tau(\hat{X}_s^\eps, \hat{Y}_s^\eps)\left(v_{1}(s)-u_{1}(s)\right)\right] \,ds \nonumber\\
  &\quad + \frac{\sqrt{\eps}}{\delta} \int_{0}^{t} \tau(\hat{X}_s^\eps, \hat{Y}_s^\eps) \,dW_s.\notag
\end{align}

We can rewrite this as follows
\begin{align}
\hat{Y}_t^\eps &= e^{-\frac{\eps}{\delta^{2}}\Gamma t}y_{0}+ \frac{\eps}{\delta^{2}}\int_{0}^{t}e^{-\frac{\eps}{\delta^{2}}\Gamma (t-s)} \zeta(\hat{X}_s^\eps, \hat{Y}_s^\eps) ds +\frac{\sqrt{\eps}h(\eps)}{\delta}\int_{0}^{t}e^{-\frac{\eps}{\delta^{2}}\Gamma (t-s)} \tau(\hat{X}_s^\eps, \hat{Y}_s^\eps)\left(v_{1}(s)-u_{1}(s)\right) \,ds \nonumber\\
  &\quad + \frac{\sqrt{\eps}}{\delta} \int_{0}^{t}e^{-\frac{\eps}{\delta^{2}}\Gamma (t-s)} \tau(\hat{X}_s^\eps, \hat{Y}_s^\eps) \,dW_s.\notag
\end{align}

Now let us define
\begin{align}
Z^{\eps}_{t}&=\frac{\sqrt{\eps}h(\eps)}{\delta}\int_{0}^{t}e^{-\frac{\eps}{\delta^{2}}\Gamma (t-s)} \tau(\hat{X}_s^\eps, \hat{Y}_s^\eps)\left(v_{1}(s)-u_{1}(s)\right) \,ds\nonumber\\
M^{\eps}_{t}&=\frac{\sqrt{\eps}}{\delta} \int_{0}^{t}e^{-\frac{\eps}{\delta^{2}}\Gamma (t-s)} \tau(\hat{X}_s^\eps, \hat{Y}_s^\eps) \,dW_s\nonumber\\
\Delta^{\eps}_{t}&=\hat{Y}_t^\eps-Z^{\eps}_{t}-M^{\eps}_{t}.\nonumber
\end{align}

A simple computation shows that
\[
d\Delta^{\eps}_{t}=-\frac{\eps}{\delta^{2}}\Gamma \cdot\Delta^{\eps}_{t}\,dt+\frac{\eps}{\delta^{2}}  \zeta(\hat{X}_s^\eps, \hat{Y}_s^\eps)\,dt .
\]

Consequently, we obtain that
\begin{align}
\frac{1}{2} d |\Delta^{\eps}_{t}|^{2}&= \left< d \Delta^{\eps}_{t}, \Delta^{\eps}_{t}\right>\nonumber\\
&\leq -\frac{\eps}{\delta^{2}}\left<\Gamma \cdot \Delta^{\eps}_{t},\Delta^{\eps}_{t}\right>,dt+\frac{\eps}{\delta^{2}} \left< \zeta(\hat{X}_t^\eps, \Delta^{\eps}_{t}+Z^{\eps}_{t}+M^{\eps}_{t})-\zeta(\hat{X}_t^\eps, Z^{\eps}_{t}+M^{\eps}_{t}), \Delta^{\eps}_{t}\right>dt\nonumber\\
&\quad +\frac{\eps}{\delta^{2}} \left< \zeta(\hat{X}_t^\eps, Z^{\eps}_{t}+M^{\eps}_{t}), \Delta^{\eps}_{t}\right>dt\nonumber\\
&\leq -\frac{\eps}{\delta^{2}}\left<(\Gamma-L_{\zeta} I) \cdot\Delta^{\eps}_{t}, \Delta^{\eps}_{t}\right>dt  +\frac{\eps}{\delta^{2}} \left< \zeta(\hat{X}_t^\eps, Z^{\eps}_{t}+M^{\eps}_{t}), \Delta^{\eps}_{t}\right>dt\nonumber\\
&\leq -\frac{\eps}{\delta^{2}}\gamma_{0} |\Delta^{\eps}_{t}|^{2}\,dt  +\frac{1}{2}\frac{\eps}{\delta^{2}} \gamma_{0}|\Delta^{\eps}_{t}|^{2}dt+C_{0}\frac{\eps}{\delta^{2}}\left|\zeta(\hat{X}_t^\eps, Z^{\eps}_{t}+M^{\eps}_{t})\right|^{2}dt\nonumber\\
&\leq -\frac{\eps}{\delta^{2}}\gamma_{0} |\Delta^{\eps}_{t}|^{2}\,dt  +\frac{1}{2}\frac{\eps}{\delta^{2}} \gamma_{0}|\Delta^{\eps}_{t}|^{2}dt+C_{0}\frac{\eps}{\delta^{2}}\left(1+ |Z^{\eps}_{t}|^{2}+|M^{\eps}_{t}|^{2}\right)dt\nonumber\\
&\leq -\frac{1}{2}\frac{\eps}{\delta^{2}}\gamma_{0} |\Delta^{\eps}_{t}|^{2}\,dt +C_{0}\frac{\eps}{\delta^{2}}\left(1+ |Z^{\eps}_{t}|^{2}+|M^{\eps}_{t}|^{2}\right)dt\nonumber
\end{align}
for some unimportant constant $C_{0}<\infty$ that may change from line to line. In the second to the last step we used Young's inequality. Therefore, by integration, we obtain
\begin{align}
|\Delta^{\eps}_{t}|^{2}&\leq e^{-\frac{\eps}{\delta^{2}}\gamma_{0}t}|y_{0}|^{2}+C_{0}\frac{\eps}{\delta^{2}}\int_{0}^{t}e^{-\frac{\eps}{\delta^{2}}\gamma_{0}(t-s)}
\left(1+ |Z^{\eps}_{s}|^{2}+|M^{\eps}_{s}|^{2}\right)ds\nonumber
\end{align}

Young's convolution inequality then yields,
\begin{align}
\int_{0}^{T}|\Delta^{\eps}_{t}|^{2} dt&\leq C_{0}\frac{\delta^{2}}{\eps}|y_{0}|^{2}+C_{0}\int_{0}^{T}\left(1+ |Z^{\eps}_{s}|^{2}+|M^{\eps}_{s}|^{2}\right)ds.\nonumber
\end{align}

Given the definition of $M^{\eps}_{t}$ we also have
\begin{align}
\mathbb{E}|M^{\eps}_{t}|^{2}&\leq \frac{\eps}{\delta^{2}} \int_{0}^{t}e^{-2\frac{\eps}{\delta^{2}}\gamma_{0} (t-s)} \mathbb{E}\left\Vert\tau(\hat{X}_s^\eps, \hat{Y}_s^\eps)\right\Vert^{2} ds,\nonumber
\end{align}
and again Young's inequality for convolutions gives
\begin{equation*}
\int_{0}^{T}\mathbb{E}|M^{\eps}_{t}|^{2} dt \leq C_{0} \int_{0}^{T} \mathbb{E}\left\Vert\tau(\hat{X}_s^\eps, \hat{Y}_s^\eps)\right\Vert^{2} ds .
\end{equation*}

Similarly, we also have for $Z^{\eps}_{t}$
\begin{align}
\int_{0}^{T}|Z^{\eps}_{t}|^{2}dt&\leq \frac{\eps h^{2}(\eps)}{\delta^{2}}\left(\int_{0}^{T}e^{-\frac{\eps}{\delta^{2}}\gamma_{0} s} ds \right)^{2}\left|\int_{0}^{T}\left|\tau(\hat{X}_s^\eps, \hat{Y}_s^\eps)\left(v_{1}(s)-u_{1}(s)\right)\right|ds\right|^{2}\nonumber\\
&\leq \frac{\delta^{2}}{\eps}h^{2}(\eps) \sup_{s\leq T}\left\Vert\tau(\hat{X}_s^\eps, \hat{Y}_s^\eps)\right\Vert^{2}\int_{0}^{T}|v_{1}(s)-u_{1}(s)|^{2}ds\nonumber\\
&\leq N \frac{\delta^{2}}{\eps}h^{2}(\eps) \sup_{s\leq T}\left\Vert\tau(\hat{X}_s^\eps, \hat{Y}_s^\eps)\right\Vert^{2}+  \frac{\delta^{2}}{\eps}h^{2}(\eps) \sup_{s\leq T}\left\Vert\tau(\hat{X}_s^\eps, \hat{Y}_s^\eps)\right\Vert^{2}\int_{0}^{T}\left|\hat{Y}_s^\eps\right|^{2}ds.\nonumber
\end{align}

In the last step we used the fact that $u_{1}(s)=-\alpha^{\top}_{1}(\bar{X}_{s},y)\nabla_{\eta}\check{U}(s,\eta)$ is bounded (by assumption) with respect to $\eta$ and $s$ and grows at most linearly with respect to $y$ (Conditions \ref{Cond:ExtraReg} and \ref{C:Tightness} respectively). Combining the latter estimates, we obtain for some unimportant constant $C_{0}<\infty$ that may change from line to line
\begin{align*}
\mathbb{E}\int_{0}^{T}|\Delta^{\eps}_{t}|^{2}dt&\leq  C_{0}\left[1+ \frac{\delta^{2}}{\eps}|y_{0}|^{2} +N \frac{\delta^{2}}{\eps}h^{2}(\eps)\mathbb{E} \sup_{s\leq T}\left\Vert\tau(\hat{X}_s^\eps, \hat{Y}_s^\eps)\right\Vert^{2}+  \frac{\delta^{2}}{\eps}h^{2}(\eps)\mathbb{E} \sup_{s\leq T}\left\Vert\tau(\hat{X}_s^\eps, \hat{Y}_s^\eps)\right\Vert^{2}\int_{0}^{T}\left|\hat{Y}_s^\eps\right|^{2}ds\right.\nonumber\\
&\qquad \qquad \left.+
\int_{0}^{T} \mathbb{E}\left\Vert\tau(\hat{X}_s^\eps, \hat{Y}_s^\eps)\right\Vert^{2} ds\right]
\end{align*}

Recalling now that $\hat{Y}_t^\eps=\Delta^{\eps}_{t}+Z^{\eps}_{t}+M^{\eps}_{t}$ we also get that for some unimportant constant $C_{0}<\infty$
\begin{align}
\mathbb{E}\int_{0}^{T}| \hat{Y}_s^\eps|^{2}\,ds&\leq  C_{0}\left[1+ \frac{\delta^{2}}{\eps}|y_{0}|^{2} +N \frac{\delta^{2}}{\eps}h^{2}(\eps) \mathbb{E}\sup_{s\leq T}\left\Vert\tau(\hat{X}_s^\eps, \hat{Y}_s^\eps)\right\Vert^{2}+  \frac{\delta^{2}}{\eps}h^{2}(\eps) \mathbb{E}\sup_{s\leq T}\left\Vert\tau(\hat{X}_s^\eps, \hat{Y}_s^\eps)\right\Vert^{2}\int_{0}^{T}\left|\hat{Y}_s^\eps\right|^{2}ds\right.\nonumber\\
&\qquad \qquad \left.+
\int_{0}^{T} \mathbb{E}\left\Vert\tau(\hat{X}_s^\eps, \hat{Y}_s^\eps)\right\Vert^{2} ds\right]\label{Eq:BoundL2normY}
\end{align}

Next using the uniform boundedness assumption on the diffusion coefficient $\tau$ and choosing $\eps$ sufficiently small so that $C_{0}\frac{\delta^{2}}{\eps}h^{2}(\eps) \sup_{x,y}\left\Vert\tau(x,y)\right\Vert^{2}<1/2$ we conclude the proof of the first statement of the  lemma.

In regards to the second statement of the lemma, we have the following calculations. By the It\^{o} formula we have
\begin{align}
|\hat{Y}_t^\eps|^{2} &= |y_{0}|^{2}+ \int_{0}^{t}2\left\langle -\frac{\eps}{\delta^{2}} \Gamma \hat{Y}_s^\eps + \frac{\eps}{\delta^{2}} \zeta(\hat{X}_s^\eps, \hat{Y}_s^\eps) +\frac{\sqrt{\eps}h(\eps)}{\delta}\tau(\hat{X}_s^\eps, \hat{Y}_s^\eps)\left(v_{1}(s)-u_{1}(s)\right) , \hat{Y}_s^\eps \right\rangle \,ds \nonumber\\
  &\quad +\frac{\eps}{\delta^{2}} \int_{0}^{t} [ \tau : \tau ] (\hat{X}_s^\eps, \hat{Y}_s^\eps) ds+ 2\frac{\sqrt{\eps}}{\delta} \int_{0}^{t} \left<\hat{Y}_s^\eps, \tau(\hat{X}_s^\eps, \hat{Y}_s^\eps) \,dW_s\right>.\notag
\end{align}

Using the Burkholder--Davis--Gundy inequality, the latter display implies, for Regime 1 (and analogously for Regime 2), that for some constant $C_{0}<\infty$ that may change from line to line
\begin{align}
\mathbb{E}\left(\sup_{t\in[0,T]}|\hat{Y}_t^\eps|^{2}\right) &\leq |y_{0}|^{2}+ \frac{\eps}{\delta^{2}}C_{0}\mathbb{E}\int_{0}^{T}\left(1+|\hat{Y}_s^\eps|^{2}\right)ds
+\frac{\sqrt{\eps}h(\eps)}{\delta}\mathbb{E}\int_{0}^{T}
 \left| \left\langle \tau(\hat{X}_s^\eps, \hat{Y}_s^\eps)\left(v_{1}(s)-u_{1}(s)\right) , \hat{Y}_s^\eps \right\rangle \right| \,ds \nonumber\\
  &\quad + 2\frac{\sqrt{\eps}}{\delta} C_{0}\mathbb{E}\left(\int_{0}^{T} \left|\hat{Y}_s^\eps\right|^{2} ds\right)^{1/2}\notag\\
   &\leq |y_{0}|^{2}+ \frac{\eps}{\delta^{2}}C_{0}\mathbb{E}\int_{0}^{T}\left(1+|\hat{Y}_s^\eps|^{2}\right)ds
+\frac{\sqrt{\eps}h(\eps)}{\delta}\left(\mathbb{E}\int_{0}^{T}
 \left|\hat{Y}_s^\eps\right|^{2}ds \right)^{1/2}\left(\mathbb{E}\int_{0}^{T} \left|v_{1}(s)-u_{1}(s)\right|^{2} \,ds\right)^{1/2} \nonumber\\
  &\quad + 2\frac{\sqrt{\eps}}{\delta} C_{0}\mathbb{E}\left(\int_{0}^{T} \left|\hat{Y}_s^\eps\right|^{2} ds\right)^{1/2}\notag\\
  &\leq |y_{0}|^{2}+ C_{0}\left[1+\frac{\eps}{\delta^{2}}+\frac{\sqrt{\eps}}{\delta}
+\frac{\sqrt{\eps}h(\eps)}{\delta}\left(\mathbb{E}\int_{0}^{T} \left|\hat{Y}_s^\eps\right|^{2(q_{\sigma}\vee q_{b})} \,ds \right)^{1/2}\right]\nonumber\\
&\leq |y_{0}|^{2}+ C_{0}\left[1+\frac{\eps}{\delta^{2}}
+\frac{\sqrt{\eps}h(\eps)}{\delta} \right].\nonumber
 \end{align}

In order to obtain the last bounds, we used the first statement of the lemma  for the integral moments of the $\hat{Y}_t^\eps$ process together with the uniform boundedness of $\tau$ and the assumption $(q_{\sigma}\vee q_{b})<1$. Then, the desired bound follows, completing the proof of the lemma.
\end{proof}

\begin{proposition}
\label{P:tightness}Assume that Conditions \ref{C:growth}, \ref{C:ergodic}, \ref{C:Tightness}, \ref{C:center}, and \ref{Cond:ExtraReg} are satisfied.  Consider any family $\{v^{\eps},\eps>0\}$ of
controls in $\mathcal{A}$ satisfying, for some $R<\infty$,
\begin{equation*}
\sup_{\eps>0}\int_{t_0}^{T}\left| v^{\eps}(t)\right|
^{2}d t<R, \text{almost surely}. % \label{A:UniformlyAdmissibleControls}%
\end{equation*}

Let $\hat{\eta}^{\eps}$ be the moderate deviations process defined  in \eqref{Eq:HatModerateDeviationsProcess} which is associated to the process $(\hat{X}^{\eps},\hat{Y}^{\eps})$ driven by the control process $v=v^{\eps}$.
Then the  family $\{\hat{\eta}^{\eps},\eps>0\}$ is tight on $\mathcal{C}([t_0,T];\mathbb{R}^{n})$.
\end{proposition}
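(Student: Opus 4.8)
The plan is to establish tightness of $\{\hat{\eta}^{\eps}\}_{\eps>0}$ on $\mathcal{C}([t_0,T];\mathbb{R}^n)$ via the standard criterion: uniform control of the initial value (trivial here, since $\hat{\eta}^{\eps}_{t_0}=0$) together with a uniform modulus-of-continuity estimate of the form $\mathbb{E}|\hat{\eta}^{\eps}_t-\hat{\eta}^{\eps}_s|^{2}\le C(R,T)|t-s|$ (or an Aldous-type criterion for stochastic integrals), which combined gives tightness by the Kolmogorov--Chentsov / Billingsley criterion. First I would write out $\hat{\eta}^{\eps}_t=\frac{\hat X^{\eps}_t-\bar X_t}{\sqrt{\eps}h(\eps)}$ explicitly by subtracting the averaged equation $d\bar X_t=\bar\lambda_i(\bar X_t)\,dt$ from \eqref{E:generalControlledSDE} and dividing by $\sqrt{\eps}h(\eps)$. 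This produces several groups of terms: (a) a ``fast drift'' term of order $\frac{\eps}{\delta}\frac{1}{\sqrt{\eps}h(\eps)}\int b(\hat X^{\eps}_s,\hat Y^{\eps}_s)\,ds$ (Regime 1) or the corresponding $\frac{\eps}{\delta}-\gamma$ term (Regime 2), which is handled by the Poisson-equation correctors $\chi$ and $\Phi_i$ from \eqref{E:cell}--\eqref{E:Phi} and It\^o's formula; (b) a term $\frac{1}{\sqrt{\eps}h(\eps)}\int (c(\hat X^{\eps}_s,\hat Y^{\eps}_s)-\bar\lambda_i(\bar X_s))\,ds$, again split using $\Phi_i$ into an averaged-drift piece plus a Poisson-corrector piece; (c) the control term $\int \sigma(\hat X^{\eps}_s,\hat Y^{\eps}_s)(v_1^{\eps}(s)-u_1^{\eps}(s))\,ds$, which is $O(1)$ after division by $\sqrt{\eps}h(\eps)$ because of the explicit $\sqrt{\eps}h(\eps)$ prefactor in \eqref{E:generalControlledSDE}; and (d) the stochastic integral $\frac{1}{h(\eps)}\int\sigma(\hat X^{\eps}_s,\hat Y^{\eps}_s)\,dW_s$.

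For each group I would then estimate increments. The bulk of the work is showing that the terms involving the fast variable $\hat Y^{\eps}$ do not blow up: all the moment estimates needed are exactly what Lemma \ref{L:YIntegralgrowth} provides, namely $\sup_{\eps}\mathbb{E}\int_{t_0}^T|\hat Y^{\eps}_s|^2\,ds\le K(R,T)$ and the pathwise-supremum bound $\frac{\delta^2}{\eps}\mathbb{E}(\sup_t|\hat Y^{\eps}_t|^2)\le K(R,T)(1+\frac{\delta}{\sqrt{\eps}}h(\eps))$. Using It\^o's formula applied to $\chi_\ell(\hat X^{\eps}_s,\hat Y^{\eps}_s)$ and to $\Phi_i$, the troublesome $\frac{1}{\delta}$-scaled drifts get converted into boundary terms of order $\frac{\delta}{\sqrt{\eps}h(\eps)}\chi(\hat X^{\eps}_t,\hat Y^{\eps}_t)$ (which vanish as $\eps\downarrow 0$ since $j_i<\infty$, i.e. $\delta/(\eps\sqrt{\eps}h(\eps))$ is bounded — actually one uses $\delta/\eps$ times $1/(\sqrt{\eps}h(\eps))$), plus manageable integrals of $\nabla_y\chi\cdot g$, $\nabla_y\chi\cdot\tau_1$, etc., whose growth in $|\hat Y^{\eps}|$ is controlled by Condition \ref{C:Tightness} ($q_b+q_\sigma<1$, $q_c+q_\sigma<1$) together with the $L^1$/$L^2$ bounds from Lemma \ref{L:YIntegralgrowth}. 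The boundedness of $\nabla_\eta\check U$ from Condition \ref{Cond:ExtraReg} is what keeps the control term $u_1^{\eps}$ from contributing uncontrolled growth. For the stochastic-integral piece (d), I would use Burkholder--Davis--Gundy to bound $\mathbb{E}|\int_s^t \frac{1}{h(\eps)}\sigma\,dW|^2\le \frac{1}{h^2(\eps)}\int_s^t\mathbb{E}\|\sigma\|^2\le C|t-s|$ once $\mathbb{E}\|\sigma(\hat X^{\eps}_r,\hat Y^{\eps}_r)\|^2$ is controlled via Lemma \ref{L:YIntegralgrowth} and $q_\sigma<1$; note $h(\eps)\to\infty$ only helps here. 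Assembling, one gets $\mathbb{E}|\hat\eta^{\eps}_t-\hat\eta^{\eps}_s|^2\le C(R,T)\,|t-s|$ uniformly in small $\eps$, plus a uniform bound $\sup_\eps\mathbb{E}\sup_t|\hat\eta^{\eps}_t|<\infty$ from the same ingredients, which together yield tightness.

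The main obstacle, as the authors themselves flag in the remarks, is controlling the fast-motion contributions without any periodicity/compactness assumption on $\mathcal Y=\mathbb{R}^d$: one must carefully track the polynomial growth in $|\hat Y^{\eps}|$ of all the Poisson correctors and their $y$-derivatives (guaranteed to be only polynomial by Theorem \ref{T:regularity}), and check that the exponents interact with $q_b,q_c,q_\sigma$ so that Condition \ref{C:Tightness} exactly closes the estimates — this is where the improvement from $L^2$- to $L^1$-type bounds on $\hat Y^{\eps}$ (the content of Lemma \ref{L:YIntegralgrowth} and the source of the weakened Condition \ref{C:Tightness}) is essential. A secondary subtlety is the bookkeeping of the limiting constants $j_1,j_2$ from \eqref{Eq:LimitingConstants}: one must verify that every boundary term produced by It\^o's formula on the correctors carries a prefactor that is bounded (not merely $o(1)$) uniformly in $\eps$, which is precisely the role of the finiteness of $j_i$. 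Everything else is routine Gronwall/BDG/Young-inequality manipulation of the type already displayed in the proof of Lemma \ref{L:YIntegralgrowth}, so I would state the increment estimate as the key lemma, prove it along these lines, and conclude tightness by the classical criterion.
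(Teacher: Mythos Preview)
Your decomposition strategy is essentially the paper's: split $\hat\eta^\eps$ into the fast-drift piece, the fluctuation $\lambda_i-\bar\lambda_i$, the Lipschitz piece $\bar\lambda_i(\hat X^\eps)-\bar\lambda_i(\bar X)$, the control terms, and the martingale; apply It\^o's formula to the correctors $\chi$ and $\Phi_i$; feed in Lemma~\ref{L:YIntegralgrowth}; and close a Gronwall loop through the Lipschitz piece to obtain $\sup_{\eps}\mathbb{E}\sup_t|\hat\eta^\eps_t|<\infty$. That part matches the paper exactly, including the role of $j_i<\infty$ in keeping the prefactor of the $\nabla_y\Phi_1\cdot g$ term bounded.

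The gap is in the modulus-of-continuity step. The uniform moment bound $\mathbb{E}|\hat\eta^\eps_t-\hat\eta^\eps_s|^2\le C|t-s|$ that you propose is not obtainable from the ingredients you list. First, Lemma~\ref{L:YIntegralgrowth} gives only time-integrated control $\mathbb{E}\int_{t_0}^T|\hat Y^\eps_s|^2\,ds\le K$, not a pointwise-in-$s$ bound on $\mathbb{E}|\hat Y^\eps_s|^2$ uniform in $\eps$; without the latter you cannot turn $\mathbb{E}\int_s^t|\sigma(\hat X^\eps_r,\hat Y^\eps_r)|^2\,dr$ or $\mathbb{E}\int_s^t|A(\hat X^\eps_r,\hat Y^\eps_r)v^\eps(r)|^2\,dr$ into $C|t-s|$. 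Second, the boundary terms produced by It\^o on the correctors, e.g.\ $\frac{\delta}{\sqrt\eps\,h(\eps)}\chi(\hat X^\eps_t,\hat Y^\eps_t)$, admit no uniform-in-$\eps$ H\"older increment estimate because $\hat Y^\eps$ evolves on the fast scale $\delta^2/\eps$. (And even granting the bound, exponent $1$ on $|t-s|$ is below the Kolmogorov--Chentsov threshold.) The paper therefore uses Billingsley's in-probability criterion instead: the boundary terms are handled by $\mathbb{E}\sup_t|\hat\eta^{1,1,\eps}_t|\to 0$ plus Markov (no increment needed --- the whole term vanishes), and the integral terms by the in-probability statement of Lemma~\ref{L:productBound}(iii), which is tailored to exploit only the integrated $L^2$ bound on $\hat Y^\eps$. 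Your parenthetical ``Aldous-type criterion'' is the correct fallback; the Kolmogorov route as written does not close.
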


\begin{proof}[Proof of Proposition~\ref{P:tightness}]

We will write $\hat{\eta}^{\eps,v^{\eps}}$ instead of $\hat{\eta}^{\eps}$ in order to emphasize the dependence on the control process $v^{\eps}$. In order to prove tightness of $\{ \hat{\eta}^{\eps, v^\eps},\eps>0 \}$ on $\mathcal{C}([t_0,T];\mathbb{R}^{n})$, we make use of  the characterization of Theorem 8.7 in~\cite{Billingsley1968}. It follows from that result that it is enough to prove that  there is $\eps_{0}>0$ such that for every $k>0$,
\begin{enumerate}[(i)]
\item there exists $N<\infty$ such that
\begin{equation}
\mathbb{P}\left(\sup_{t_{0}\leq t\leq T}\left|\hat{\eta}^{\eps,v^{\eps}}_{t}\right|>N\right) \leq k,
\quad\textrm{ for every }\eps\in(0,\eps_{0});
\label{Eq:CompactContainment}
\end{equation}
\item for every $M<\infty$,
\begin{equation}
\lim_{\rho\downarrow0}\sup_{\eps\in(0,\eps_{0})}\mathbb{P}\left(
\sup_{|t_{1}-t_{2}|<\rho,t_{0}\leq t_{1}<t_{2}\leq T}|\hat{\eta}^{\eps,v^{\eps}}_{t_{1}}-\hat{\eta}^{\eps,v^{\eps}}_{t_{2}}|\geq k, \sup_{t\in[t_0,T]}\left|\hat{\eta}^{\eps,v^{\eps}}_{t}\right|\leq M\right)  =0.\label{Eq:ContinuousReg}
\end{equation}
\end{enumerate}

Let us first write out what $\hat{\eta}^{\eps,v^{\eps}}$ is. For regime $i=1,2$ we have
\begin{align}
  \hat{\eta}_{t}^{\eps, v^{\eps}} - \hat{\eta}_{t_{0}}^{\eps, v^{\eps}} &= \int_{t_0}^{t} \frac{\frac{\eps}{\delta}b(\hat{X}_s^{\eps, v^{\eps}}, \hat{Y}_s^{\eps, v^{\eps}}) + c(\hat{X}_s^{\eps, v^{\eps}}, \hat{Y}_s^{\eps, v^{\eps}}) - \lambda_i(\hat{X}_s^{\eps, v^{\eps}}, \hat{Y}_s^{\eps, v^{\eps}})}{\sqrt{\eps} h(\eps)} ds \notag \\
  &+  \int_{t_0}^{t} \frac{\lambda_i(\hat{X}_s^{\eps, v^{\eps}}, \hat{Y}_s^{\eps, v^{\eps}}) - \bar{\lambda}_i(\hat{X}_s^{\eps, v^{\eps}})}{\sqrt{\eps} h(\eps)} ds + \int_{t_0}^{t} \frac{\bar{\lambda}_i(\hat{X}_s^{\eps, v^{\eps}}) - \bar{\lambda}_i(\bar{X}_s)}{\sqrt{\eps} h(\eps)} ds\notag\\
  &+ \int_{t_0}^{t} \sigma(\hat{X}_s^{\eps, v^{\eps}}, \hat{Y}_s^{\eps, v^{\eps}}) v_1^\eps(s) \,ds - \int_{t_0}^{t} \sigma(\hat{X}_s^{\eps, v^{\eps}}, \hat{Y}_s^{\eps, v^{\eps}}) u_1^\eps(s) \,ds\notag
    + \frac{1}{h(\eps)} \int_{t_0}^{t} \sigma(\hat{X}_{s}^{\eps, v^{\eps}}, \hat{Y}_{s}^{\eps, v^{\eps}}) \,dW_{s}. \notag\\
    &=\sum_{i=1}^{6} \hat{\eta}_{t}^{i,\eps} \label{Eq:ControlledProcesses}
\end{align}

where $\hat{\eta}_{t}^{i,\eps}$ represents the $i^{\text{th}}$ term on the right-hand side of~\eqref{Eq:ControlledProcesses}. We can write
\[
\mathbb{P}\left(\sup_{t_{0}\leq t\leq T}\left|\hat{\eta}^{\eps,v^{\eps}}_{t}\right|>N\right)\leq \sum_{i=1}^{6}\mathbb{P}\left(\sup_{t_{0}\leq t\leq T}\left|\hat{\eta}^{i,\eps}_{t}\right|>N/6\right)
\]

Now both statements follow from the control representation~\eqref{Eq:ControlledProcesses} together with the results on the growth of the solution to the Poisson equation by Theorem~\ref{T:regularity} and using Lemma~\ref{L:productBound} to treat each term on the right-hand side of~\eqref{Eq:ControlledProcesses}. For sake of completeness, we prove the first statement~\eqref{Eq:CompactContainment}.
The second statement~\eqref{Eq:ContinuousReg} follows similarly using the general purpose Lemma~\ref{L:productBound}. At this point we remark that the proof of Lemma \ref{L:productBound} is based on  the first statement of Lemma \ref{L:YIntegralgrowth}.

%Even though, the proof here is similar to the corresponding proofs in \cite{MorseSpiliopoulos2017}, we present below the argument emphasizing the differences.
We focus on Regime 1, as Regime 2 is similar and a little bit simpler. We also set $t_{0}=0$. Let us first treat the first term, i.e, the term
\[
\hat{\eta}^{1,\eps}_{t}=\int_{0}^{t} \frac{\frac{\eps}{\delta}b(\hat{X}_s^{\eps, v^{\eps}}, \hat{Y}_s^{\eps, v^{\eps}}) + c(\hat{X}_s^{\eps, v^{\eps}}, \hat{Y}_s^{\eps, v^{\eps}}) - \lambda_1(\hat{X}_s^{\eps, v^{\eps}}, \hat{Y}_s^{\eps, v^{\eps}})}{\sqrt{\eps} h(\eps)} ds
\]

We apply the It\^{o} formula to $\chi(x,y)$, the solution to \eqref{E:cell}, with $(x,y)=(\hat{X}_s^{\eps, v^{\eps}}, \hat{Y}_s^{\eps, v^{\eps}})$ and rearrange terms to obtain
\begin{align} \label{E:coefsExpand}
  \hat{\eta}^{1,\eps}_{t}&=\frac{1}{\sqrt{\eps}h(\eps)} \int_0^t \left[ \frac{\eps}{\delta} b(\hat{X}_s^{\eps,v^\eps}, \hat{Y}_s^{\eps,v^\eps}) + c(\hat{X}_s^{\eps,v^\eps}, \hat{Y}_s^{\eps,v^\eps}) - \lambda_1(\hat{X}_s^{\eps,v^\eps}, \hat{Y}_s^{\eps,v^\eps}) \right] \,ds \\
  \notag &= - \frac{\delta}{\sqrt{\eps}h(\eps)} \left( \chi(\hat{X}_t^{\eps,v^\eps}, \hat{Y}_t^{\eps,v^\eps}) - \chi(x_0, y_0) \right) \\
  \notag &+ \frac{\delta}{\sqrt{\eps}h(\eps)} \int_0^t \left( \nabla_x \chi \right) (\hat{X}_s^{\eps,v^\eps}, \hat{Y}_s^{\eps,v^\eps}) \left[ \frac{\eps}{\delta} b(\hat{X}_s^{\eps,v^\eps}, \hat{Y}_s^{\eps,v^\eps}) + c(\hat{X}_s^{\eps,v^\eps}, \hat{Y}_s^{\eps,v^\eps}) \right] \,ds \\
  \notag &+ \delta \int_0^t \left( \nabla_x \chi \right) (\hat{X}_s^{\eps,v^\eps}, \hat{Y}_s^{\eps,v^\eps}) \sigma(\hat{X}_s^{\eps,v^\eps}, \hat{Y}_s^{\eps,v^\eps}) (v_1^\eps(s)-u_1^\eps(s)) \,ds \\
  \notag &+ \int_0^t \left( \nabla_y \chi \right) (\hat{X}_s^{\eps,v^\eps}, \hat{Y}_s^{\eps,v^\eps}) \left[ \tau_1(\hat{X}_s^{\eps,v^\eps}, \hat{Y}_s^{\eps,v^\eps}) (v_1^\eps(s)-u_1^\eps(s)) + \tau_2(\hat{X}_s^{\eps,v^\eps}, \hat{Y}_s^{\eps,v^\eps}) (v_2^\eps(s)-u_2^\eps(s)) \right] \,ds \\
  \notag &+ \frac{\delta \sqrt{\eps}}{2h(\eps)} \int_0^t \sigma \sigma^\T (\hat{X}_s^{\eps,v^\eps}, \hat{Y}_s^{\eps,v^\eps}) : \nabla_x \nabla_x \chi(\hat{X}_s^{\eps,v^\eps}, \hat{Y}_s^{\eps,v^\eps}) \,ds \\
  \notag &+ \frac{\delta}{h(\eps)} \int_0^t \left( \left( \nabla_x \chi \right) (\hat{X}_s^{\eps,v^\eps}, \hat{Y}_s^{\eps,v^\eps}) \sigma(\hat{X}_s^{\eps,v^\eps}, \hat{Y}_s^{\eps,v^\eps}) + \frac{1}{\delta} \left( \nabla_y \chi \right) (\hat{X}_s^{\eps,v^\eps}, \hat{Y}_s^{\eps,v^\eps}) \tau_1(\hat{X}_s^{\eps,v^\eps}, \hat{Y}_s^{\eps,v^\eps}) \right) \,dW_s \\
  \notag &+ \frac{1}{h(\eps)} \int_0^t \left( \nabla_y \chi \right) (\hat{X}_s^{\eps,v^\eps}, \hat{Y}_s^{\eps,v^\eps}) \tau_2(\hat{X}_s^{\eps,v^\eps}, \hat{Y}_s^{\eps,v^\eps}) \,dB_s\nonumber\\
  &=\sum_{j=1}^{7}\hat{\eta}^{1,j,\eps}_{t}, \nonumber
\end{align}
where $\hat{\eta}^{1,j,\eps}_{t}$ is the $j^{th}$ term on the right hand side of the last display.

By the second statement of Lemma \ref{L:YIntegralgrowth}, we have after an application of H\"{o}lder's inequality
\begin{align}
\mathbb{E}\left(\sup_{t\in[0,T]}| \hat{\eta}^{1,1,\eps}_{t}|\right)
 & \leq \frac{2\delta}{\sqrt{\eps} h(\eps)}\left(1+\mathbb{E}\sup_{t\in[0,T]}| Y^{\eps,v^{\eps}}_{t}|^{q_{b}}\right)
 \leq \frac{2\delta}{\sqrt{\eps} h(\eps)}\left(1+\left(\mathbb{E}\sup_{t\in[0,T]}| Y^{\eps,v^{\eps}}_{t}|^{2}\right)^{q_{b}/2}\right)\nonumber\\
 &\leq C \frac{\delta}{\sqrt{\eps} h(\eps)}\left(1+\left(\frac{\eps}{\delta^{2}}+\frac{\sqrt{\eps}}{\delta}h(\eps)\right)^{q_{b}/2}\right)
 \leq C \left(\frac{\delta}{\sqrt{\eps} h(\eps)}+\left(\frac{1}{h(\eps)^{2}}+\frac{\delta}{\sqrt{\eps}h(\eps)}\right)^{1/2}\right),\label{Eq:Eta_1_1term}
 \end{align}
which goes to zero,  hence it is certainly bounded. Next, let us treat terms $\hat{\eta}^{1,j,\eps}_{t}$ for $j=2,\dots, 7$.

Let us first look at terms $\hat{\eta}^{1,j,\eps}_{t}$ for $j=2,\dots, 5$. These are Riemann integral terms and ignoring the prefactors involving $\eps$ and $\delta$ (notice that all the prefactors go to zero apart from the term $j=4$ which has a prefactor of one) are of the form
\begin{align*}
\int_{0}^{t}B_{1}(\hat{X}_s^{\eps,v^\eps}, \hat{Y}_s^{\eps,v^\eps})ds \text{ or } \int_{0}^{t}A_{1}(\hat{X}_s^{\eps,v^\eps}, \hat{Y}_s^{\eps,v^\eps})v_{i}^{\eps}(s)ds
\end{align*}
for appropriate vector valued functions $B_{1}(x,y)$ and matrix valued functions $A_{1}(x,y)$ and $i=1,2$.
Now due to the growth assumption of Assumption \ref{C:growth} and Theorem \ref{T:regularity} we notice that
\[
|B_{1}(x,y)|\leq K(1+|y|^{q_{B_{1}}}), \text{ with }q_{B_{1}}=\max\{2q_{b},2q_{\sigma}+q_{b},q_{b}+q_{c},q_{\sigma}+q_{b}+q_{\sigma}\vee q_{b}\}
\]
and
\[
|A_{1}(x,y)|\leq K(1+|y|^{q_{A_{1}}}), \text{ with }q_{A_{1}}=q_{\sigma}+q_{b}
\]

By Lemma \ref{L:productBound} we then get that the desired bounds due the restrictions of Condition \ref{C:Tightness}, i.e., $q_{A_{1}}<1$ and $q_{B_{1}}<2$, hold. Namely, we obtain that
\begin{align*}
\sup_{\eps\in(0,\eps_{0})}\sum_{j=2}^{5}\mathbb{E}\left(\sup_{t\in[0,T]}| \hat{\eta}^{1,j,\eps}_{t}|\right)
 & \leq C
\end{align*}
for some constant $C<\infty$. As far as the stochastic integral terms $\hat{\eta}^{1,j,\eps}_{t}$ for $j=6, 7$ we proceed along similar lines as follows. The hardest term to treat is  the first component of $\hat{\eta}^{1,6,\eps}_{t}$. We have  that for a constant $C<\infty$ that may change from line to line and  for  $q_{\nabla_{x}\chi \sigma}<1$ (where $q_{\nabla_{x}\chi \sigma}$ denotes the degree of polynomial growth in $|y|$ of the norm of  $\left( \nabla_{x}\chi \right) (x,y)\sigma(x,y)$), we have for some constant $C<\infty$ that may change from inequality to inequality
\begin{align*}
  \mathbb{E}\left[\sup_{t\in[0,T]} \left\lvert \int_{0}^{t} \left( \nabla_{x}\chi \right) \sigma(\hat{X}_s^{\eps,v^\eps}, \hat{Y}_s^{\eps,v^\eps}) d W_{s} \right\rvert^{2}  \right]  &\leq C
    \mathbb{E} \int_{0}^{T} \left| \left( \nabla_{x}\chi \right) \sigma(\hat{X}_s^{\eps,v^\eps}, \hat{Y}_s^{\eps,v^\eps})\right|^{2} d s < C,%\nonumber\\
\end{align*}

from which the result follows by Lemma~\ref{L:productBound} given that $q_{\nabla_{x}\chi \sigma}=q_{\sigma}+q_{b}<1$. The other stochastic integral terms are treated along the same lines. Hence, we get
\begin{align*}
\sup_{\eps\in(0,\eps_{0})}\sum_{j=6}^{7}\mathbb{E}\left(\sup_{t\in[0,T]}| \hat{\eta}^{1,j,\eps}_{t}|\right)
 & \leq C.
\end{align*}

Thus overall we have obtained that for some $\eps_{0}>0$ and for a constant $C<\infty$
\begin{align}
\sup_{\eps\in(0,\eps_{0})}\mathbb{E}\left(\sup_{t\in[0,T]}| \hat{\eta}^{1,\eps}_{t}|\right)
 & \leq C. \label{Eq:CompactContainment1}
\end{align}

Next, let us treat the second term in \eqref{Eq:ControlledProcesses}, i.e, the term
\[
\hat{\eta}^{2,\eps}_{t}=\int_{t_0}^{t} \frac{\lambda_1(\hat{X}_s^{\eps, v^{\eps}}, \hat{Y}_s^{\eps, v^{\eps}}) - \bar{\lambda}_1(\hat{X}_s^{\eps, v^{\eps}})}{\sqrt{\eps} h(\eps)} ds.
\]

To do so, we apply the It\^{o} formula to the solution $\Phi_{1}$ of \eqref{E:Phi}. After rearranging terms, we get
\begin{align} \label{E:lambdaExpand}
  &\frac{1}{\sqrt{\eps} h(\eps)} \int_0^t \left( \lambda_1( \hat{X}_s^{\eps, v^\eps}, \hat{Y}_s^{\eps, v^\eps} ) - \bar{\lambda}_1(\hat{X}_s^{\eps, v^\eps}) \right) \,ds = - \frac{\delta^2/ \eps}{\sqrt{\eps} h(\eps)} \left( \Phi_1(\hat{X}_t^{\eps, v^\eps}, \hat{Y}_t^{\eps, v^\eps}) - \Phi_1 ( x_0, y_0 ) \right) \\
  & + \frac{\delta^2 / \eps}{\sqrt{\eps} h(\eps)}  \int_0^t \left( \nabla_x \Phi_1 \right) ( \hat{X}_s^{\eps, v^\eps}, \hat{Y}_s^{\eps, v^\eps} ) \left( \frac{\eps}{\delta} b( \hat{X}_s^{\eps, v^\eps}, \hat{Y}_s^{\eps, v^\eps} ) + c( \hat{X}_s^{\eps, v^\eps}, \hat{Y}_s^{\eps, v^\eps} ) \right) \,ds \notag \\
  & + \frac{\delta^2 / \eps}{\sqrt{\eps} h(\eps)}  \int_0^t \frac{\eps}{2} \sigma\sigma^\T( \hat{X}_s^{\eps, v^\eps}, \hat{Y}_s^{\eps, v^\eps} ) : \nabla_x\nabla_x \Phi_1( \hat{X}_s^{\eps, v^\eps}, \hat{Y}_s^{\eps, v^\eps} ) \,ds \notag \\
  & +  \frac{\delta^2 / \eps}{\sqrt{\eps} h(\eps)}  \int_0^t \frac{\eps}{\delta} \sum_{i,k}\partial_{y_{i}}\partial_{x_{k}}   \Phi_1( \hat{X}_s^{\eps, v^\eps}, \hat{Y}_s^{\eps, v^\eps} ) (\sigma\tau^{\T}_{1})_{i,k}( \hat{X}_s^{\eps, v^\eps}, \hat{Y}_s^{\eps, v^\eps} )  \,ds \notag \\
   & + \frac{\delta^2}{\eps} \int_0^t \left( \nabla_x \Phi_1 \right) ( \hat{X}_s^{\eps, v^\eps}, \hat{Y}_s^{\eps, v^\eps} ) \sigma( \hat{X}_s^{\eps, v^\eps}, \hat{Y}_s^{\eps, v^\eps} ) (u_{1}(s)-v_{1}^\eps(s)) \,ds \notag \\
    &+ \frac{\delta / \eps}{ \sqrt{\eps} h(\eps)} \int_0^t \left( \nabla_y \Phi_1 \right) ( \hat{X}_s^{\eps, v^\eps}, \hat{Y}_s^{\eps, v^\eps} ) g( \hat{X}_s^{\eps, v^\eps}, \hat{Y}_s^{\eps, v^\eps} ) \,ds \notag \\
   & + \frac{\delta}{\eps} \int_0^t \left( \nabla_y \Phi_1 \right) ( \hat{X}_s^{\eps, v^\eps}, \hat{Y}_s^{\eps, v^\eps} ) \left[\tau_1( \hat{X}_s^{\eps, v^\eps}, \hat{Y}_s^{\eps, v^\eps} ) (u_{1}(s)-v_{1}^\eps(s)) + \tau_2( \hat{X}_s^{\eps, v^\eps}, \hat{Y}_s^{\eps, v^\eps} ) (u_{2}(s)-v_{2}^\eps(s)) \right] \,ds \notag \\
  & + \frac{\delta^2}{\eps h(\eps)} \int_0^t \left( \nabla_x \Phi_1 \right) ( \hat{X}_s^{\eps, v^\eps}, \hat{Y}_s^{\eps, v^\eps} ) \sigma( \hat{X}_s^{\eps, v^\eps}, \hat{Y}_s^{\eps, v^\eps} ) \,dW_s \notag \\
  & + \frac{\delta}{\eps h(\eps)} \int_0^t \left( \nabla_y \Phi_1 \right) ( \hat{X}_s^{\eps, v^\eps}, \hat{Y}_s^{\eps, v^\eps} ) \tau_1( \hat{X}_s^{\eps, v^\eps}, \hat{Y}_s^{\eps, v^\eps} ) \,dW_s \notag \\
  & + \frac{\delta}{\eps h(\eps)} \int_0^t \left( \nabla_y \Phi_1 \right) ( \hat{X}_s^{\eps, v^\eps}, \hat{Y}_s^{\eps, v^\eps} ) \tau_2( \hat{X}_s^{\eps, v^\eps}, \hat{Y}_s^{\eps, v^\eps} ) \,dB_s . \notag\\
   &=\sum_{j=1}^{10}\hat{\eta}^{2,j,\eps}_{t}.\notag
\end{align}

From this representation, we obtain the result that we want, in exactly the same way as we did for the first term. In particular, $\hat{\eta}^{2,1,\eps}_{t}$ follows as $\hat{\eta}^{1,1,\eps}_{t}$ and the rest of the terms are Riemann and stochastic integral terms. For example, the Riemann integral terms (ignoring the prefactors involving $\eps$ and $\delta$; notice that all the prefactors go to zero) are of the form
\begin{align*}
\int_{0}^{t}B_{2}(\hat{X}_s^{\eps,v^\eps}, \hat{Y}_s^{\eps,v^\eps})ds \text{ or } \int_{0}^{t}A_{2}(\hat{X}_s^{\eps,v^\eps}, \hat{Y}_s^{\eps,v^\eps})v_{i}^{\eps}(s)ds
\end{align*}
for appropriate vector valued functions $B_{2}(x,y)$ and matrix valued functions $A_{2}(x,y)$ and $i=1,2$.
Now due to the growth assumption of Condition \ref{C:growth} and Theorem \ref{T:regularity} we notice that
\[
|B_{2}(x,y)|\leq K(1+|y|^{q_{B_{2}}}), \text{ with }q_{B_{2}}=\max\{2q_{\sigma}+(q_{b}\vee q_{c}),q_{\sigma}+(q_{b}\vee q_{c})+(q_{\sigma}\vee q_{b})\}=q_{\sigma}+(q_{b}\vee q_{c})+(q_{\sigma}\vee q_{b})
\]
and
\[
|A_{2}(x,y)|\leq K(1+|y|^{q_{A_{2}}}), \text{ with }q_{A_{2}}=q_{\sigma}+(q_{b}\vee q_{c}).
\]

In the end, using Lemma \ref{L:YIntegralgrowth} and Lemma \ref{L:productBound}, we obtain that for some $\eps_{0}>0$ and for a constant $C<\infty$
\begin{align}
\sup_{\eps\in(0,\eps_{0})}\mathbb{E}\left(\sup_{t\in[0,T]}| \hat{\eta}^{2,\eps}_{t}|\right)
 & \leq C. \label{Eq:CompactContainment2}
\end{align}

In regards to the third term in \eqref{Eq:ControlledProcesses}, i.e., to
\[
\hat{\eta}^{3,\eps}_{t}=\int_{t_0}^{t} \frac{\bar{\lambda}_1(\hat{X}_s^{\eps, v^{\eps}}) - \bar{\lambda}_1(\bar{X}_s)}{\sqrt{\eps} h(\eps)} ds
\]
we proceed as follows. Lipschitz continuity of the function $\bar{\lambda}_{1}$ gives
\begin{equation} \label{E:lambdabarBound}
  \sup_{0\le t\le T} \left\lvert \frac{1}{\sqrt{\eps} h(\eps)} \int_0^t \left( \bar{\lambda}_1(\hat{X}_s^{\eps, v^\eps}) - \bar{\lambda}_1(\bar{X}_s) \right) \,ds \right\rvert \le L_\lambda \int_0^T \left\lvert \hat{\eta}_{s}^{\eps, v^\eps} \right\rvert \,ds.
\end{equation}

Finally, we notice that terms $\hat{\eta}^{i,\eps}_{t}$ for $i=4,5,6$ in \eqref{Eq:ControlledProcesses} are simply Riemann and stochastic integral terms that can be treated the same way as the Riemann and stochastic integral terms of $\hat{\eta}^{1,\eps}_{t}$.  In short, we obtain
that for some $\eps_{0}>0$ and for a constant $C<\infty$
\begin{align}
\sum_{i=4}^{6}\sup_{\eps\in(0,\eps_{0})}\mathbb{E}\left(\sup_{t\in[0,T]}| \hat{\eta}^{i,\eps}_{t}|\right)
 & \leq C. \label{Eq:CompactContainment3}
\end{align}

Now putting together the estimates \eqref{Eq:CompactContainment1}, \eqref{Eq:CompactContainment2}, \eqref{E:lambdabarBound}, and \eqref{Eq:CompactContainment3} we obtain
\begin{equation*}
  \E \sup_{t\in[0,T]}\lvert \hat{\eta}_t^{\eps, v^\eps} \rvert \le C + L_\lambda \int_0^T \E \sup_{s\in[0,t]} \left\lvert \hat{\eta}_{s}^{\eps, v^\eps} \right\rvert \,dt
\end{equation*}
where $C$ is the sum of the upper bounds on the expectations of the terms $\mathbb{E}\left(\sup_{t\in[0,T]}| \hat{\eta}^{i,\eps}_{t}|\right)$ for $i=1,2,4,5,6$.  Then by Gronwall's lemma, we have that there is some $\eps>0$ and some constant $C<\infty$ such that
\begin{equation} \label{E:etaBound}
  \E \sup_{t\in[0,T]}\lvert \hat{\eta}_t^{\eps, v^\eps} \rvert \le C.
\end{equation}

The latter statement now immediately implies the first statement of the proposition, i.e.\ \eqref{Eq:CompactContainment}, using Markov's inequality.

The second statement~\eqref{Eq:ContinuousReg} follows by similar arguments using part (iii) of Lemma~\ref{L:productBound} and the growth properties of the involved functions with respect to~$|y|$. The only exception to this are the terms $\hat{\eta}^{1,1,\eps}_{t}$ and $\hat{\eta}^{2,1,\eps}_{t}$. For these terms we need to show that %for every $\zeta>0$, there exists some $\eps_{0}>0$ with the property
%\[
%\sup_{\eps\in(0,\eps_{0})}\mathbb{P}\left[\sup_{t\in[0,T]}|\hat{\eta}^{i,1,\eps}_{t}|>\zeta\right]\leq \zeta, \text{ for }i=1,2.
%\]
 there exists some $\eps_{0}>0$ such that for every  $\zeta_{2}>0$, there exists  $\zeta_{1}>0$ with the property
\[
\sup_{\eps\in(0,\eps_{0})}\mathbb{P}\left[\sup_{t\in[0,T]}|\hat{\eta}^{i,1,\eps}_{t}|>\zeta_{1}\right]\leq \zeta_{2}, \text{ for }i=1,2.
\]

For $i=1$, this follows from estimate \eqref{Eq:Eta_1_1term} on $\mathbb{E}\left(\sup_{t\in[0,T]}| \hat{\eta}^{1,1,\eps}_{t}|\right)$ together with  Markov's inequality (the statement for $i=2$ is basically identical).

This completes the proof of the proposition.
\end{proof}

\section{On relaxing the growth properties of the subsolution}\label{S:RelaxedConditions}

In this section, we discuss the possibility of relaxing the conditions on the growth of the subsolution $\check{U}(s,\eta)$ on $\eta$. Recall that in Condition \ref{Cond:ExtraReg} we assume that the first derivative of $\check{U}$ is bounded uniformly with respect to $\eta$.  In this section, we investigate whether it is possible to relax this. It turns out that even though this is possible, it depends on the growth of the coefficients $b,c,\sigma$, i.e., on $q_{b},q_{c},q_{\sigma}$.

Let us replace Conditions \ref{Cond:ExtraReg} and \ref{C:Tightness} by Condition \ref{Cond:ExtraReg2} below.
\begin{condition}
\label{Cond:ExtraReg2} There exists a subsolution $\check{U}$ which has continuous derivatives up to
order $1$ in $s$ and order $2$ in $\eta$. In addition, there exists a constant $0<C<\infty$ and a constant $q_{\check{U}}$, such that for $\left((q_{b}\vee q_{c})+q_{\sigma}+(q_{\sigma}\vee q_{b})\right)<1$ we have
\[
0\leq q_{\check{U}}\leq1-\left((q_{b}\vee q_{c})+q_{\sigma}+(q_{\sigma}\vee q_{b})\right)
\]
 and
\[
\sup_{s\in[t_0,T]}\left|\nabla_{\eta}\check{U}(s,\eta)\right|\leq C(1+|\eta|^{q_{\check{U}}}).
\]
%In addition, in the case of Regime 2, if $q_{\sigma}\vee q_{b}\vee q_{c}=q_{c}$ , then we assume
%\[
%\lim_{\eps\rightarrow 0}\eps h(\eps)^{\frac{2q_{c}}{1-q_{c}}}=0.
%\]
\end{condition}

\begin{remark}
Notice that Condition \ref{Cond:ExtraReg2} implies for example that in the case where all the coefficients are uniformly bounded, then one can assume quadratic growth of the subsolution $\check{U}(s,\eta)$ with respect to $\eta$ with all the theoretical results of this paper remaining valid. Notice that in the examples of Section \ref{S:Simulations}, the subsolutions used for the importance sampling change of measure indeed grow quadratically on $\eta$.
\end{remark}
The changes that occur are in the proofs of Lemma \ref{L:YIntegralgrowth} and Proposition \ref{P:tightness}. In particular, now we have to deal with upper bounds of the form $\int_{t_0}^{T}|\hat{Y}_{s}^{\eps}|^{\nu_{1}}|\hat{\eta}_{s}^{\eps}|^{\nu_{2}}ds$ for appropriate $\nu_{1}>0$ and $\nu_{2}>0$. In the case of Condition \ref{Cond:ExtraReg}, we always had $\nu_{2}=0$. We will not repeat here the lengthy calculations (because they essentially follow via the same steps albeit with more tedious algebra), but we state below statements of the results as well as go over the changes required for the sake of completeness.

In the results that follow, we have set  $q_{1}=q_{\sigma}\vee q_{b}$ in the case of Regime 1 and $q_{1}=q_{\sigma}\vee q_{b}\vee q_{c}$ in the case of Regime 2. We also set $t_{0}=0$ for notational convenience.

\begin{lemma} \label{L:YIntegralgrowthRelaxed1}
Assume that Conditions~\ref{C:growth}, \ref{C:ergodic}, \ref{C:center} and \ref{Cond:ExtraReg2} hold. Consider any family $\{v^{\eps},\eps>0\}$ of
controls in $\mathcal{A}$ satisfying, for some $R<\infty$,
\begin{equation*}
    \sup_{\eps > 0} \int_{0}^T | v^{\eps}(s) |^2 d s < R
\end{equation*}
almost surely. Then there exist $\eps_{0}>0$ small enough such that
 \begin{equation}
    \mathbb{E}  \int_{0}^{T} | \hat{Y}_s^{\eps, v^{\eps}}|^{2} d s  \le K(R,T)\left(1+\frac{\delta^{2}}{\eps}h(\eps)^{2}\mathbb{E}\int_{0}^{T}| \hat{\eta}_s^{\eps, v^{\eps}}|^{\frac{2q_{\check{U}}}{1-q_{1}}} ds\right),\label{Eq:YboundGeneralBound1}
\end{equation}
and
\begin{equation}
\frac{\delta^{2}}{\eps}\mathbb{E}\left(\sup_{t\in[0,T]}\left|\hat{Y}_s^{\eps, v^{\eps}}\right|^{2}\right)\leq K(R,T)\left(1+\frac{\delta}{\sqrt{\eps}}h(\eps)\left(1+\mathbb{E}\left(\sup_{t\in[0,T]}\left|\hat{\eta}_s^{\eps, v^{\eps}}\right|^{\frac{2q_{\check{U}}}{1- q_{1}}}\right)\right)\right), \label{Eq:YboundGeneralBound2}
\end{equation}
for some finite constant $K(R,T)$ that may depend on $(R,T)$, but not on $\eps,\delta$.
\end{lemma}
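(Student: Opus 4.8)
The plan is to follow the proof of Lemma~\ref{L:YIntegralgrowth} line by line, changing only the estimate of the control term. As there, I would first reduce without loss of generality to $g=\tau_2=0$, $t_0=0$, $\tau_1=\tau$, invoke Condition~\ref{C:ergodic} to write $f(x,y)=-\Gamma y+\zeta(x,y)$ with $\langle(\Gamma-L_\zeta I)\xi,\xi\rangle\ge\gamma_0|\xi|^2$, apply the variation of constants formula to $\hat{Y}^\eps$, and decompose $\hat{Y}^\eps=\Delta^\eps+Z^\eps+M^\eps$ into the dissipative part $\Delta^\eps$ (solving $d\Delta^\eps_t=-\tfrac{\eps}{\delta^2}\Gamma\Delta^\eps_t\,dt+\tfrac{\eps}{\delta^2}\zeta(\hat{X}^\eps_t,\hat{Y}^\eps_t)\,dt$), the control part $Z^\eps$, and the martingale part $M^\eps$. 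The dissipativity/Young/Young-convolution estimate for $\Delta^\eps$ and the It\^o-isometry/convolution estimate for $M^\eps$ go through verbatim (using only boundedness of $\tau$), giving $\int_0^T|\Delta^\eps_t|^2\,dt\lesssim\tfrac{\delta^2}{\eps}|y_0|^2+\int_0^T(1+|Z^\eps_s|^2+|M^\eps_s|^2)\,ds$ and $\int_0^T\E|M^\eps_t|^2\,dt\lesssim1$.

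The only new point is the bound on $Z^\eps$, i.e.\ on $\int_0^T|u_1(s)|^2\,ds$, where now $u_1(s)=-\alpha_1^{\T}(\bar{X}_s,\hat{Y}^\eps_s)\nabla_\eta\check{U}(s,\hat\eta^\eps_s)$ is no longer linear in $y$ and bounded in $\eta$. By Theorem~\ref{T:regularity} the growth in $|y|$ of $\nabla_y\chi$ (resp.\ $\nabla_y\Phi_2$) is controlled so that $\|\alpha_1(x,y)\|\le K(1+|y|^{q_1})$ with $q_1=q_\sigma\vee q_b$ in Regime~1 (resp.\ $q_\sigma\vee q_b\vee q_c$ in Regime~2), while Condition~\ref{Cond:ExtraReg2} gives $|\nabla_\eta\check{U}(s,\eta)|\le C(1+|\eta|^{q_{\check{U}}})$. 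Hence $|u_1(s)|^2\le C(1+|\hat{Y}^\eps_s|^{2q_1})(1+|\hat\eta^\eps_s|^{2q_{\check{U}}})$, and the single dangerous factor is $|\hat{Y}^\eps_s|^{2q_1}|\hat\eta^\eps_s|^{2q_{\check{U}}}$. Since Condition~\ref{Cond:ExtraReg2} forces $0\le q_1<1$, I would apply Young's inequality with conjugate exponents $1/q_1$ and $1/(1-q_1)$, namely $|\hat{Y}^\eps_s|^{2q_1}|\hat\eta^\eps_s|^{2q_{\check{U}}}=\big(|\hat{Y}^\eps_s|^2\big)^{q_1}\big(|\hat\eta^\eps_s|^{2q_{\check{U}}/(1-q_1)}\big)^{1-q_1}\le\theta|\hat{Y}^\eps_s|^2+C_\theta|\hat\eta^\eps_s|^{2q_{\check{U}}/(1-q_1)}$ for arbitrary $\theta>0$; the surviving $\hat\eta$-exponent is exactly $\tfrac{2q_{\check{U}}}{1-q_1}$, matching \eqref{Eq:YboundGeneralBound1}.

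Inserting this into $\int_0^T|Z^\eps_t|^2\,dt\lesssim\tfrac{\delta^2}{\eps}h^2(\eps)\,\sup_{x,y}\|\tau\|^2\int_0^T|v_1(s)-u_1(s)|^2\,ds$ and then, via $\hat{Y}^\eps=\Delta^\eps+Z^\eps+M^\eps$, into the bound for $\E\int_0^T|\hat{Y}^\eps_s|^2\,ds$ yields
\[
\E\int_0^T|\hat{Y}^\eps_s|^2\,ds\le C\Big(1+\tfrac{\delta^2}{\eps}|y_0|^2+\tfrac{\delta^2}{\eps}h^2(\eps)\big[\theta\,\E\!\int_0^T|\hat{Y}^\eps_s|^2\,ds+C_\theta\,\E\!\int_0^T|\hat\eta^\eps_s|^{\frac{2q_{\check{U}}}{1-q_1}}\,ds\big]\Big).
\]
Since $\delta/\eps$ stays bounded and $\sqrt{\eps}\,h(\eps)\to0$, we have $\tfrac{\delta^2}{\eps}h^2(\eps)=\tfrac{\delta^2}{\eps^2}\big(\sqrt{\eps}h(\eps)\big)^2\to0$, so for $\eps<\eps_0$ small enough $C\tfrac{\delta^2}{\eps}h^2(\eps)\theta<\tfrac12$ and the $\hat{Y}^\eps$-integral is absorbed on the left, giving \eqref{Eq:YboundGeneralBound1}. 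For \eqref{Eq:YboundGeneralBound2} I would repeat the It\^o expansion of $|\hat{Y}^\eps_t|^2$ and the Burkholder--Davis--Gundy estimate from Lemma~\ref{L:YIntegralgrowth}; the only modified term is the drift contribution $\tfrac{\sqrt{\eps}h(\eps)}{\delta}\E\int_0^T|\langle\tau(\hat{X}^\eps_s,\hat{Y}^\eps_s)(v_1(s)-u_1(s)),\hat{Y}^\eps_s\rangle|\,ds$, in which I bound $|u_1(s)|$ as above, use Cauchy--Schwarz to factor out $\big(\E\int_0^T|\hat{Y}^\eps_s|^2\,ds\big)^{1/2}$, apply the same Young splitting to the residual $|\hat{Y}^\eps_s|^{2q_1}|\hat\eta^\eps_s|^{2q_{\check{U}}}$ factor, and substitute \eqref{Eq:YboundGeneralBound1}; collecting terms produces \eqref{Eq:YboundGeneralBound2}.

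The main obstacle is purely one of bookkeeping: one has to check that in every place where a power of $|\hat{Y}^\eps|$ strictly below $2$ is generated by $u_1$ (or $u_2$), Young's inequality can be arranged so that the absorbable $|\hat{Y}^\eps|^2$-term always carries a prefactor vanishing as $\eps\downarrow0$, and that the leftover power of $|\hat\eta^\eps|$ is never larger than $\tfrac{2q_{\check{U}}}{1-q_1}$ --- this is exactly where the constraint $(q_b\vee q_c)+q_\sigma+(q_\sigma\vee q_b)<1$ and the bound on $q_{\check{U}}$ in Condition~\ref{Cond:ExtraReg2} enter; no new analytic difficulty arises beyond those already handled in Lemma~\ref{L:YIntegralgrowth}.
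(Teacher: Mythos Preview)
Your proposal is correct and follows essentially the same approach as the paper's own proof. The paper's argument is in fact more terse than yours: it simply states that, following the proof of Lemma~\ref{L:YIntegralgrowth}, the bound \eqref{Eq:BoundL2normY} acquires the extra term $\tfrac{\delta^{2}}{\eps}h^{2}(\eps)\,\mathbb{E}\int_{0}^{T}|\hat{Y}_s^\eps|^{2q_{1}}|\hat{\eta}_s^\eps|^{2q_{\check{U}}}\,ds$, then applies Young's inequality (with the same conjugate exponents $1/q_1$ and $1/(1-q_1)$ you identify) and absorbs the $|\hat{Y}^\eps|^2$-term using the vanishing prefactor, exactly as you outline; the second estimate is dismissed as ``similar.''
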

\begin{proof}[Proof of Lemma \ref{L:YIntegralgrowthRelaxed1}]
By carefully following the proof of Lemma \ref{L:YIntegralgrowth} we see that \eqref{Eq:BoundL2normY} now takes the form
\begin{align}
\mathbb{E}\int_{0}^{T}| \hat{Y}_s^\eps|^{2}dt&\leq  C_{0}\left[1+ \frac{\delta^{2}}{\eps}|y_{0}|^{2} +N \frac{\delta^{2}}{\eps}h^{2}(\eps) \mathbb{E}\sup_{s\leq T}\left\Vert\tau(\hat{X}_s^\eps, \hat{Y}_s^\eps)\right\Vert^{2}+  \frac{\delta^{2}}{\eps}h^{2}(\eps) \mathbb{E}\sup_{s\leq T}\left\Vert\tau(\hat{X}_s^\eps, \hat{Y}_s^\eps)\right\Vert^{2}\int_{0}^{T}\left|\hat{Y}_s^\eps\right|^{2q_{1}}\left|\hat{\eta}_s^\eps\right|^{2q_{\check{U}}}ds\right.\nonumber\\
&\qquad \qquad \left.+
\int_{0}^{T} \mathbb{E}\left\Vert\tau(\hat{X}_s^\eps, \hat{Y}_s^\eps)\right\Vert^{2} ds\right].\nonumber%\label{Eq:BoundL2normYRelaxed1}
\end{align}

Then for $\eps,\delta$ small enough and after applying Young's inequality to the term $\int_{0}^{T}\left|\hat{Y}_s^\eps\right|^{2q_{1}}\left|\hat{\eta}_s^\eps\right|^{2q_{\check{U}}}ds$ we obtain directly \eqref{Eq:YboundGeneralBound1}. The derivation of \eqref{Eq:YboundGeneralBound2} is similar.
\end{proof}

Essentially \eqref{Eq:YboundGeneralBound1} and \eqref{Eq:YboundGeneralBound2} mean that the indicated upper bounds are now coupled with the behavior of the moderate deviations process $\hat{\eta}^{\eps, v^{\eps}}$, whereas before they were independent from it. Then, Condition \ref{Cond:ExtraReg2} together with Lemma \ref{L:YIntegralgrowthRelaxed1} allow us to get an a-priori bound for $\mathbb{E}\left(\sup_{t\in[0,T]}| \hat{\eta}^{\eps}_{t}|^{2}\right)$. Notice that, in the previous section we had assumed $q_{\check{U}}=0$ and we derived a bound for $\mathbb{E}\left(\sup_{t\in[0,T]}| \hat{\eta}^{\eps}_{t}|\right)$, see (\ref{E:etaBound}). In this section, due to allowing $q_{\check{U}}>0$ we strengthen the bound to $\mathbb{E}\left(\sup_{t\in[0,T]}| \hat{\eta}^{\eps}_{t}|^{2}\right)$. This is due to the coupling between the bounds for appropriate norms of $\hat{Y}_s^{\eps, v^{\eps}}$  and the corresponding norms for $\hat{\eta}_s^{\eps, v^{\eps}}$ as stated in Lemma \ref{L:YIntegralgrowthRelaxed1}. There was no such coupling in the setting of the previous section.  We have the following proposition.

\begin{proposition}\label{P:bound_eta_relaxed}
Assume that Conditions~\ref{C:growth}, \ref{C:ergodic}, \ref{C:center} and \ref{Cond:ExtraReg2} hold. Consider any family $\{v^{\eps},\eps>0\}$ of
controls in $\mathcal{A}$ satisfying, for some $R<\infty$,
\begin{equation*}
    \sup_{\eps > 0} \int_{0}^T | v^{\eps}(s) |^2 d s < R
\end{equation*}
almost surely. Then there exist $\eps_{0}>0$ small enough and a constant $C<\infty$ such that
 \begin{equation*}
    \sup_{\eps\in(0,\eps_{0})}\E \sup_{t\in[0,T]}\lvert \hat{\eta}_t^{\eps, v^\eps} \rvert^{2} \le C.
\end{equation*}
\end{proposition}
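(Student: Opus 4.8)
The plan is to mimic the proof of Proposition~\ref{P:tightness} (in particular the derivation of \eqref{E:etaBound}), but working with the second moment of $\hat\eta^{\eps,v^\eps}$ and using Lemma~\ref{L:YIntegralgrowthRelaxed1} in place of Lemma~\ref{L:YIntegralgrowth}. I would start from the control representation \eqref{Eq:ControlledProcesses}, apply the It\^o expansions \eqref{E:coefsExpand} and \eqref{E:lambdaExpand} to its first two terms, take the supremum over $t\in[0,T]$, square, and take expectations. Cauchy--Schwarz on the Riemann integrals, the Burkholder--Davis--Gundy inequality on the stochastic integrals, and the polynomial growth bounds for $\chi,\Phi_1$ and their derivatives from Theorem~\ref{T:regularity} then reduce everything to finitely many expressions of the form $\E\int_0^T|\hat Y_s^{\eps,v^\eps}|^{\nu_1}|\hat\eta_s^{\eps,v^\eps}|^{\nu_2}\,ds$ (with bounded $\eps,\delta,h(\eps)$ prefactors), plus the two ``boundary'' terms $\hat\eta^{1,1,\eps},\hat\eta^{2,1,\eps}$, plus a linear-in-$\hat\eta$ term from the Lipschitz continuity of $\bar\lambda_i$ exactly as in \eqref{E:lambdabarBound}. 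The terms with $\nu_2>0$ are precisely those produced by the feedback control, since $|u^\eps(s)|\le C(1+|\hat Y_s^{\eps,v^\eps}|^{q_1})(1+|\hat\eta_s^{\eps,v^\eps}|^{q_{\check U}})$ by Condition~\ref{Cond:ExtraReg2} and the growth of $\alpha_1,\alpha_2$.

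The key new step, which does not occur in Section~\ref{S:MainResult}, is to decouple the $\hat Y$- and $\hat\eta$-moments. For each mixed term I would apply Young's inequality with conjugate exponents chosen so that the power of $|\hat Y_s^{\eps,v^\eps}|$ becomes exactly $2$; the balance built into Condition~\ref{Cond:ExtraReg2} (which in particular gives $q_{\check U}\le 1-q_1$ and $q_{\check U}\le 1-q_\sigma-q_1$) then forces the accompanying power of $|\hat\eta_s^{\eps,v^\eps}|$ to be at most $2$, so a further use of Jensen's inequality bounds it by $1+|\hat\eta_s^{\eps,v^\eps}|^2$. Now Lemma~\ref{L:YIntegralgrowthRelaxed1} applies: its first statement controls $\E\int_0^T|\hat Y_s^{\eps,v^\eps}|^2\,ds$ and its second controls $\tfrac{\delta^2}{\eps}\E\sup_t|\hat Y_t^{\eps,v^\eps}|^2$, each in terms of $1$ plus $\tfrac{\delta^2}{\eps}h^2(\eps)\,\E\int_0^T|\hat\eta_s^{\eps,v^\eps}|^{2q_{\check U}/(1-q_1)}\,ds$ (resp.\ its supremum analogue). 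Since the finiteness of the $j_i$ and $\sqrt\eps h(\eps)\to0$ imply $\tfrac{\delta^2}{\eps}h^2(\eps)\to0$ and $\tfrac{\delta}{\sqrt\eps}h(\eps)\to0$, these prefactors are $<1$ for $\eps$ small; after reducing the exponent $2q_{\check U}/(1-q_1)\le2$ by one more application of Jensen, the whole feedback contribution can be absorbed into the left-hand side. The boundary terms $\hat\eta^{1,1,\eps},\hat\eta^{2,1,\eps}$ are treated as in the $L^1$ proof but with the supremum bound \eqref{Eq:YboundGeneralBound2}, and they likewise produce only a vanishing prefactor times $1+\E\sup_t|\hat\eta_t^{\eps,v^\eps}|^2$, again absorbed.

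Putting all the estimates together gives, for $\eps$ small enough,
\[
\E\sup_{t\in[0,T]}|\hat\eta_t^{\eps,v^\eps}|^2 \;\le\; C + C\int_0^T \E\sup_{s\in[0,t]}|\hat\eta_s^{\eps,v^\eps}|^2\,dt,
\]
with $C<\infty$ independent of $\eps\in(0,\eps_0)$, and Gronwall's inequality finishes the proof. The main obstacle I anticipate is the circular coupling just described: the bound on the $\hat Y$-moments now genuinely involves the $\hat\eta$-moments, so one must run a single self-consistent estimate in which the exponent inequality of Condition~\ref{Cond:ExtraReg2} ensures the coupling closes at the level of second moments and the vanishing of $\tfrac{\delta^2}{\eps}h^2(\eps)$ and $\tfrac{\delta}{\sqrt\eps}h(\eps)$ renders the feedback on $\hat Y$ negligible; to keep all quantities a priori finite before performing the absorption one should first localize at $\tau_N=\inf\{t:|\hat\eta_t^{\eps,v^\eps}|\ge N\}$, derive the bound uniformly in $N$, and let $N\to\infty$ via Fatou. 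Verifying, by bookkeeping through \eqref{E:coefsExpand}--\eqref{E:lambdaExpand}, that every mixed exponent arising there admits such a Young splitting (i.e.\ that the $\hat Y$-exponent can be brought to $2$ while the $\hat\eta$-exponent stays $\le2$) is the remaining point needing care, but it is a routine matter of combining $q_b,q_c,q_\sigma$ with the bound on $q_{\check U}$.
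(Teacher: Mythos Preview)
Your proposal is correct and follows essentially the same route as the paper: decompose $\hat\eta^{\eps,v^\eps}$ via \eqref{Eq:ControlledProcesses}--\eqref{E:lambdaExpand}, isolate the new mixed $|\hat Y|^{\nu_1}|\hat\eta|^{\nu_2}$ terms coming from the feedback control, split them by Young's inequality so that the $\hat Y$-exponent is raised to $2$ and the resulting $\hat\eta$-exponent $2q_{\check U}/(1-q_Y)$ (with $q_Y=(q_b\vee q_c)+q_\sigma+(q_\sigma\vee q_b)$) stays $\le2$ by Condition~\ref{Cond:ExtraReg2}, feed the $\hat Y$-integrals through Lemma~\ref{L:YIntegralgrowthRelaxed1}, absorb the vanishing-prefactor pieces, and close with Gronwall. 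Your localization by the stopping time $\tau_N$ is an extra technical safeguard the paper does not spell out, but it is harmless and arguably cleaner.
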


\begin{proof}
The proof follows closely the proof of $\mathbb{E}\sup_{t\in[0,T]}| \hat{\eta}^{\eps}_{t}|$ that was derived within the proof of Proposition \ref{P:tightness}. Below, we keep the notation used in the proof of Proposition \ref{P:tightness} and we outline the main differences. As before, we give the proof for Regime 1, and the proof for Regime 2 is nearly identical.

The first main difference comes in the treatment of the terms in the expressions
 $\hat{\eta}_t^{1,1,\eps}$ and $\hat{\eta}_t^{2,1,\eps}$. Let us explain the first one. Using \eqref{Eq:YboundGeneralBound2}, we have
\begin{align*}
\mathbb{E}\left(\sup_{t\in[0,T]}| \hat{\eta}^{1,1,\eps}_{t}|^{2}\right)
 & \leq C \frac{\delta^{2}}{\eps h(\eps)^{2}}\left(1+\mathbb{E}\sup_{t\in[0,T]}| Y^{\eps,v^{\eps}}_{t}|^{2 q_{b}}\right)
 \leq C\frac{\delta^{2}}{\eps h(\eps)^{2}}\left(1+\left(\mathbb{E}\sup_{t\in[0,T]}| Y^{\eps,v^{\eps}}_{t}|^{2}\right)^{q_{b}}\right)\nonumber\\
 &\leq C \frac{\delta^{2}}{\eps h(\eps)^{2}}\left(1+\frac{\eps}{\delta^{2}}+\frac{\sqrt{\eps}}{\delta}h(\eps)\left(1+\mathbb{E}\left(\sup_{t\in[0,T]}\left|\hat{\eta}_s^{\eps, v^{\eps}}\right|^{\frac{2q_{\check{U}}}{1- q_{1}}}\right)\right)\right)\nonumber\\
 &\leq C \left(\frac{\delta^{2}}{\eps h(\eps)^{2}}+\frac{1}{h(\eps)^{2}}+\frac{\delta}{\sqrt{\eps}h(\eps)}\left(1+\mathbb{E}\left(\sup_{t\in[0,T]}\left|\hat{\eta}_s^{\eps, v^{\eps}}\right|^{\frac{2q_{\check{U}}}{1- q_{1}}}\right)\right)\right).\nonumber
\end{align*}

The second main difference comes in the treatment of the terms in the expressions for (ignoring prefactors of order one or that go to zero as $\eps,\delta\downarrow 0$)
\[
\hat{\eta}^{1,3,\eps}_{t}=\int_0^t \left( \nabla_x \chi \right) (\hat{X}_s^{\eps,v^\eps}, \hat{Y}_s^{\eps,v^\eps}) \sigma(\hat{X}_s^{\eps,v^\eps}, \hat{Y}_s^{\eps,v^\eps}) u_1^{\eps}(s) \,ds
\]
in \eqref{E:coefsExpand}, and
\[
\hat{\eta}^{2,4,\eps}_{t}=\int_0^t \left( \nabla_x \Phi_{1} \right) (\hat{X}_s^{\eps,v^\eps}, \hat{Y}_s^{\eps,v^\eps}) \sigma(\hat{X}_s^{\eps,v^\eps}, \hat{Y}_s^{\eps,v^\eps}) u_1^{\eps}(s) \,ds
\]
in \eqref{E:lambdaExpand}. Note that both of these terms involve the subsolution. Let's study the second term which is also the more cumbersome one. Recalling from \eqref{Eq:feedback_controlReg1} that
\begin{equation*}
u_{1}^{\eps}(s)=u_{1}(s,\hat{\eta}_s^{\eps,v^\eps},\hat{Y}_s^{\eps,v^\eps})=-\alpha_{1}^{\T}(\bar{X}_{s},\hat{Y}_s^{\eps,v^\eps})\nabla_{\eta}\check{U}(s,\hat{\eta}_s^{\eps,v^\eps}),
\end{equation*}
 and setting for notational convenience $q_{Y}=(q_{b}\vee q_{c})+q_{\sigma}+(q_{\sigma}\vee q_{b})<1$, we obtain for some unimportant constant $C<\infty$
\begin{align*}
&\mathbb{E}\sup_{t\in[0,T]}\left(\int_0^t \left( \nabla_x \Phi_{1} \right) (\hat{X}_s^{\eps,v^\eps}, \hat{Y}_s^{\eps,v^\eps}) \sigma(\hat{X}_s^{\eps,v^\eps}, \hat{Y}_s^{\eps,v^\eps}) u_1^{\eps}(s) \,ds\right)^{2}\leq
C\mathbb{E}\int_{0}^{T}\left(1+|\hat{Y}_s^{\eps,v^\eps}|^{2q_{Y}}\right)\left(1+|\hat{\eta}_s^{\eps,v^\eps}|^{2q_{\check{U}}}\right)ds\nonumber\\
&\qquad\leq
C\mathbb{E}\int_{0}^{T}\left(1+|\hat{Y}_s^{\eps,v^\eps}|^{2q_{Y}}+|\hat{\eta}_s^{\eps,v^\eps}|^{2q_{\check{U}}}+|\hat{Y}_s^{\eps,v^\eps}|^{2q_{Y}}|\hat{\eta}_s^{\eps,v^\eps}|^{2q_{\check{U}}}\right)ds\nonumber\\
&\qquad\leq
C\mathbb{E}\int_{0}^{T}\left(1+|\hat{Y}_s^{\eps,v^\eps}|^{2q_{Y}}+|\hat{\eta}_s^{\eps,v^\eps}|^{2q_{\check{U}}}+|\hat{Y}_s^{\eps,v^\eps}|^{2}+|\hat{\eta}_s^{\eps,v^\eps}|^{2q_{\check{U}}\frac{1}{1-q_{Y}}}\right)ds\nonumber\\
&\qquad\leq
C\mathbb{E}\int_{0}^{T}\left(1+|\hat{Y}_s^{\eps,v^\eps}|^{2}+|\hat{\eta}_s^{\eps,v^\eps}|^{2q_{\check{U}}\frac{1}{1-q_{Y}}}\right)ds.\nonumber\\
\end{align*}

In the last inequality we applied the generalized Young's inequality $ab\leq \frac{1}{p}a^{p}+\frac{1}{q}b^{q}$ for $a,b\geq 0$, $1/p+1/q=1$ and $p=\frac{2}{2q_{Y}}=1 / q_{Y}>1$. Hence, using now \eqref{Eq:YboundGeneralBound1} we subsequently obtain for the last inequality for $\eps$ small enough
\begin{align*}
&\mathbb{E}\sup_{t\in[0,T]}\left(\int_0^t \left( \nabla_x \Phi_{1} \right) (\hat{X}_s^{\eps,v^\eps}, \hat{Y}_s^{\eps,v^\eps}) \sigma(\hat{X}_s^{\eps,v^\eps}, \hat{Y}_s^{\eps,v^\eps}) u_1^{\eps}(s) \,ds\right)^{2}\nonumber\\
&\qquad\leq
C\left(1+\frac{\delta^{2}}{\eps}h(\eps)^{2}\mathbb{E}\int_{0}^{T}| \hat{\eta}_s^{\eps, v^{\eps}}|^{\frac{2q_{\check{U}}}{1-q_{b}\vee q_{\sigma}}} ds+ \mathbb{E}\int_{0}^{T}|\hat{\eta}_s^{\eps,v^\eps}|^{2q_{\check{U}}\frac{1}{1-q_{Y}}}ds\right)\leq C\left(1+ \mathbb{E}\int_{0}^{T}|\hat{\eta}_s^{\eps,v^\eps}|^{2q_{\check{U}}\frac{1}{1-q_{Y}}}ds\right).\nonumber
\end{align*}

Doing calculations along the same lines for the rest of the terms (similarly to the proof of Proposition \ref{P:tightness}), in the end we obtain using \eqref{Eq:YboundGeneralBound1} for a constant $C<\infty$ that may change from line to line and for $\eps$ small enough
\begin{align*}
&\E \sup_{t\in[0,T]}\lvert \hat{\eta}_t^{\eps, v^\eps} \rvert^{2} \le C\left[\frac{\delta^{2}}{\eps h(\eps)^{2}}+\frac{1}{h(\eps)^{2}}+\frac{\delta}{\sqrt{\eps}h(\eps)}\left(1+\mathbb{E}\left(\sup_{t\in[0,T]}\left|\hat{\eta}_s^{\eps, v^{\eps}}\right|^{\frac{2q_{\check{U}}}{1- q_{1}}}\right)\right) + \E\int_0^T   \left\lvert \hat{Y}_{s}^{\eps, v^\eps} \right\rvert^{2}ds\right.\nonumber\\
 &\qquad\qquad\left.+\int_0^T \E \sup_{s\in[0,t]} \left\lvert \hat{\eta}_{s}^{\eps, v^\eps} \right\rvert^{2}ds+ \int_0^T \E \sup_{s\in[0,t]} \left\lvert \hat{\eta}_{s}^{\eps, v^\eps} \right\rvert^{\frac{2q_{\check{U}}}{1-q_{Y}}} \,ds\right]\nonumber\\
&\qquad\qquad\le C\left[\frac{\delta^{2}}{\eps h(\eps)^{2}}+\frac{1}{h(\eps)^{2}}+\frac{\delta}{\sqrt{\eps}h(\eps)}\left(1+\mathbb{E}\left(\sup_{t\in[0,T]}\left|\hat{\eta}_s^{\eps, v^{\eps}}\right|^{\frac{2q_{\check{U}}}{1- q_{1}}}\right)\right)  \right.\nonumber\\
 &\qquad\qquad\left.+\frac{\delta^{2}}{\eps}h(\eps)^{2}\mathbb{E}\int_{0}^{T}| \hat{\eta}_s^{\eps, v^{\eps}}|^{\frac{2q_{\check{U}}}{1-q_{1}}} d s+\int_0^T \E \sup_{s\in[0,t]} \left\lvert \hat{\eta}_{s}^{\eps, v^\eps} \right\rvert^{2}ds+ \int_0^T \E \sup_{s\in[0,t]} \left\lvert \hat{\eta}_{s}^{\eps, v^\eps} \right\rvert^{\frac{2q_{\check{U}}}{1-q_{Y}}} \,ds\right].\nonumber\\
 &\qquad\qquad\le C\left[\frac{\delta^{2}}{\eps h(\eps)^{2}}+\frac{1}{h(\eps)^{2}}+\frac{\delta}{\sqrt{\eps}h(\eps)} +\frac{1}{2C}\mathbb{E}\left(\sup_{t\in[0,T]}\left|\hat{\eta}_s^{\eps, v^{\eps}}\right|^{\frac{2q_{\check{U}}}{1-q_{1}}}\right)  \right.\nonumber\\
 &\qquad\qquad\left.+\int_0^T \E \sup_{s\in[0,t]} \left\lvert \hat{\eta}_{s}^{\eps, v^\eps} \right\rvert^{2}ds+ \int_0^T \E \sup_{s\in[0,t]} \left\lvert \hat{\eta}_{s}^{\eps, v^\eps} \right\rvert^{\frac{2q_{\check{U}}}{1-q_{Y}}} \,ds\right].\nonumber
\end{align*}

In the last inequality we used the property  $\lim_{\eps\rightarrow 0}\frac{\delta}{\sqrt{\eps}h(\eps)}=0$. Hence for  sufficiently small $\eps>0$ and since by Condition \ref{Cond:ExtraReg2}, $0\leq q_{\check{U}}\leq 1-q_{Y}\leq 1-q_{1}\leq 1$, we obtain, using Gronwall's lemma   that for some small enough $\eps_{0}>0$ there is a constant $0<C<\infty$ such that
\begin{align*}
\sup_{\eps\in(0,\eps_{0})}\E \sup_{t\in[0,T]}\lvert \hat{\eta}_t^{\eps, v^\eps} \rvert^{2} &\le C.
\end{align*}
concluding the proof of the proposition.
\end{proof}

Then combining Lemma \ref{L:YIntegralgrowthRelaxed1} and Proposition \ref{P:bound_eta_relaxed} we get the following Lemma, i.e., we recover the statement of Lemma \ref{L:YIntegralgrowth}.
\begin{lemma} \label{L:YIntegralgrowthRelaxed2}
Assume that Conditions~\ref{C:growth}, \ref{C:ergodic}, \ref{C:center} and \ref{Cond:ExtraReg2} hold. Consider any family $\{v^{\eps},\eps>0\}$ of
controls in $\mathcal{A}$ satisfying, for some $R<\infty$,
\begin{equation*}
    \sup_{\eps > 0} \int_{t_0}^T | v^{\eps}(s) |^2 d s < R
\end{equation*}
almost surely. Then there exist $\eps_{0}>0$ small enough such that
 \begin{equation*}
    \mathbb{E}  \int_{t_0}^{T} | \hat{Y}_s^{\eps, v^{\eps}}|^{2} d s  \le K(R,T),
\end{equation*}
and
\begin{equation*}
\frac{\delta^{2}}{\eps}\mathbb{E}\left(\sup_{t\in[t_{0},T]}\left|\hat{Y}_s^{\eps, v^{\eps}}\right|^{2}\right)\leq K(R,T)\left(1+\frac{\delta}{\sqrt{\eps}}h(\eps)\right)
\end{equation*}
for some finite constant $K(R,T)$ that may depend on $(R,T)$, but not on $\eps,\delta$.
\end{lemma}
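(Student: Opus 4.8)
The plan is to obtain this lemma with no new estimates, simply by feeding the a priori bound of Proposition~\ref{P:bound_eta_relaxed} back into the coupled inequalities \eqref{Eq:YboundGeneralBound1} and \eqref{Eq:YboundGeneralBound2} of Lemma~\ref{L:YIntegralgrowthRelaxed1}. The only quantitative point is an arithmetic check on the exponents. Recall from the proof of Proposition~\ref{P:bound_eta_relaxed} the shorthand $q_{Y}=(q_{b}\vee q_{c})+q_{\sigma}+(q_{\sigma}\vee q_{b})$, and that $q_{1}=q_{\sigma}\vee q_{b}$ in Regime~1 while $q_{1}=q_{\sigma}\vee q_{b}\vee q_{c}$ in Regime~2; in either case $q_{1}\le q_{Y}$, since every summand in $q_{Y}$ is nonnegative and the last one already equals $q_{\sigma}\vee q_{b}$. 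By Condition~\ref{Cond:ExtraReg2} we have $0\le q_{\check U}\le 1-q_{Y}$, hence $1-q_{1}\ge 1-q_{Y}\ge q_{\check U}$ and therefore
\[
\frac{2q_{\check U}}{1-q_{1}}\le 2 .
\]
Consequently $|\hat{\eta}_{s}^{\eps,v^{\eps}}|^{2q_{\check U}/(1-q_{1})}\le 1+|\hat{\eta}_{s}^{\eps,v^{\eps}}|^{2}$, and Proposition~\ref{P:bound_eta_relaxed} immediately yields, for $\eps$ small,
\[
\sup_{\eps\in(0,\eps_{0})}\mathbb{E}\,\Big(\sup_{t\in[0,T]}|\hat{\eta}_{t}^{\eps,v^{\eps}}|^{2q_{\check U}/(1-q_{1})}\Big)\le 1+C,
\qquad
\sup_{\eps\in(0,\eps_{0})}\mathbb{E}\int_{0}^{T}|\hat{\eta}_{s}^{\eps,v^{\eps}}|^{2q_{\check U}/(1-q_{1})}\,ds\le T(1+C).
\]

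Next I would substitute the second of these bounds into \eqref{Eq:YboundGeneralBound1}, which gives
\[
\mathbb{E}\int_{0}^{T}|\hat{Y}_{s}^{\eps,v^{\eps}}|^{2}\,ds\le K(R,T)\Big(1+\tfrac{\delta^{2}}{\eps}h(\eps)^{2}\,T(1+C)\Big).
\]
Since $\tfrac{\delta^{2}}{\eps}h(\eps)^{2}=\big(\tfrac{\delta}{\sqrt{\eps}}h(\eps)\big)^{2}\to 0$ as $\eps\downarrow 0$ (finiteness of $j_{i}$, exactly as invoked at the end of the proof of Proposition~\ref{P:bound_eta_relaxed}), the right-hand side is bounded by a constant depending only on $(R,T)$ for $\eps$ small enough, which is the first assertion. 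For the second assertion I would insert the first displayed $\hat{\eta}$-bound into \eqref{Eq:YboundGeneralBound2}, obtaining
\[
\tfrac{\delta^{2}}{\eps}\mathbb{E}\Big(\sup_{t\in[0,T]}|\hat{Y}_{s}^{\eps,v^{\eps}}|^{2}\Big)\le K(R,T)\Big(1+\tfrac{\delta}{\sqrt{\eps}}h(\eps)\,(2+C)\Big)\le K'(R,T)\Big(1+\tfrac{\delta}{\sqrt{\eps}}h(\eps)\Big),
\]
which is the claimed bound after absorbing constants and restoring a general $t_{0}$ (the reduction to $t_{0}=0$ being without loss of generality, just as in Lemma~\ref{L:YIntegralgrowth}).

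There is essentially no obstacle beyond this bookkeeping: the substantive analysis has already been carried out in the coupled estimates of Lemma~\ref{L:YIntegralgrowthRelaxed1} and in the closed a priori bound $\sup_{\eps}\mathbb{E}\sup_{t}|\hat{\eta}_{t}^{\eps,v^{\eps}}|^{2}\le C$ of Proposition~\ref{P:bound_eta_relaxed}. The single place that requires a moment of care is verifying that the exponent $2q_{\check U}/(1-q_{1})$ never exceeds $2$, so that the uniform $L^{2}$-in-$\omega$, $L^{\infty}$-in-$t$ control of $\hat{\eta}^{\eps,v^{\eps}}$ is strong enough to absorb it; this is exactly what the constraint $q_{\check U}\le 1-q_{Y}$ in Condition~\ref{Cond:ExtraReg2}, together with $q_{1}\le q_{Y}$, guarantees. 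The net effect is that Lemma~\ref{L:YIntegralgrowth} is recovered under the weaker hypothesis set, with Condition~\ref{C:Tightness} replaced by the growth budget built into Condition~\ref{Cond:ExtraReg2}.
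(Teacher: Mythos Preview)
Your proposal is correct and follows exactly the approach the paper takes: the paper's entire proof is the single sentence ``combining Lemma~\ref{L:YIntegralgrowthRelaxed1} and Proposition~\ref{P:bound_eta_relaxed} we get the following Lemma,'' and you have simply filled in the bookkeeping that this sentence suppresses. Your verification that $2q_{\check U}/(1-q_{1})\le 2$ via $q_{1}\le q_{Y}$ and Condition~\ref{Cond:ExtraReg2}, together with the observation that $\tfrac{\delta^{2}}{\eps}h(\eps)^{2}=\big(\tfrac{\delta}{\sqrt{\eps}}h(\eps)\big)^{2}\to 0$, are exactly the details needed to make the combination rigorous.
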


Then using Proposition \ref{P:bound_eta_relaxed} and Lemma \ref{L:YIntegralgrowthRelaxed2}, tightness  of the family $\{\hat{\eta}^{\eps},\eps>0\}$ on $\mathcal{C}([t_0,T];\mathbb{R}^{n})$ follows as in Proposition \ref{P:tightness} and the proof of Theorem \ref{T:UniformlyLogEfficient} goes through. Details are omitted.

\section{Simulation studies}\label{S:Simulations}

In this section we present some numerical studies in order to illustrate the theoretical results of this paper. %We consider a problem where we can compare the MD based importance sampling with the LD based importance sampling and a problem where LD based importance sampling does not have a closed form subsolution whereas the MD based one has, hence making the application of the algorithm straightforward.
Before presenting the numerical studies, let us first introduce some notation. The measure to compare the different estimators is the relative error of the estimator per sample.
In order to distinguish among the different Monte Carlo procedures, we will denote by $\rho^{\eps}_{NMC}$, $\rho^{\eps}_{LD}$  and $\rho^{\eps}_{MD}$  the relative error per sample for the naive Monte Carlo (i.e. no change of measure), for the large deviations--based importance sampling estimator and for the moderate deviations--based importance sampling estimator respectively. Analogously, let $\hat{\theta}_{NMC}(\eps)$, $\hat{\theta}_{LD}(\eps)$ and $\hat{\theta}_{MD}(\eps)$ be the corresponding estimators.

Let $\textrm{Var}^{\eps}\doteq\textrm{Var}(\hat{\theta}(\eps))$ be the variance of the estimator based on the change of measure induced by the appropriate control each time. The relative error of the estimator per sample based on
the change of measure induced by the corresponding control each time is defined as
\[
\mbox{relative error per sample} \doteq \hat{\rho}^{\eps}=\sqrt{N}\frac{\mbox{standard deviation of the estimator}}%
{\mbox{expected value of the estimator}}=\frac{\sqrt{\widehat{\textrm{Var}}^{\eps}}}{\hat{\theta}(\eps)}.
\]

The smaller the relative error per sample is, the more efficient the estimator is. However, in practice both the standard deviation and the expected value
of an estimator are typically unknown, which implies that empirical relative error is
often used for measurement. This means that the expected value of
the estimator will be replaced by the empirical sample mean, and the
standard deviation of the estimator will be replaced by the
empirical sample standard error.

In Section \ref{SS:TwoScaleSystem_a} we consider a system of slow-fast diffusions and we estimate functionals associated with rare events in the moderate deviations regime in parallel to the theory developed in this paper. In  Section \ref{SS:TwoScaleSystem_b} we switch gears slightly and even though we continue to consider the same model as in Section \ref{SS:TwoScaleSystem_a}, we are now interested in estimating rare events in the large deviations regime, but using the moderate deviation methodology. In this example, one cannot apply the LD--based IS  methodology directly as the corresponding HJB does not seem to provide, at least in an obvious way, subsolutions in closed form. On the other hand the moderate deviations does so, making its application quite straightforward. We conclude with Section \ref{SS:PeriodicProblem} where we consider diffusion in rough potentials and we look at an example where one can apply both LD--based IS and MD--based IS. We see that if the event is not too rare then the MD--based IS offers a viable alternative to the LD--based IS for multiscale problems.% Related empirical observations for the case of stochastic recursive algorithms were also made in \cite{DupuisJohnson2017}.

\subsection{Example 1: A two-scale slow--fast system}\label{SS:TwoScaleSystem_a}

Consider the system of equations
\begin{align*}
dX^{\eps}_{t}&=-\partial_{x}V(X^{\eps}_{t}, Y^{\eps}_{t}) \,dt +\sqrt{\eps}\sqrt{2D}dW_{t}\nonumber\\
dY^{\eps}_{t}&=-\frac{\eps}{\delta^{2}}\partial_{y}V(X^{\eps}_{t}, Y^{\eps}_{t}) \,dt +\frac{\sqrt{\eps}}{\delta}dB_{t}\nonumber\\
(X^{\eps}_{0},Y^{\eps}_{0})&=(x_{0},y_{0}),
\end{align*}
where $W_{t}, B_{t}$ are standard independent one dimensional Brownian motions. otice that this is a standard slow-fast system with the time scale separation parameter being $\nu$ such that $1/\nu=\eps/\delta^{2}$, see for example \cite{BLP,PS}. Also, here we take
\[
V(x,y)=V_{1}(x)+V_{2}(x,y)
\]
where
\[
V_{1}(x)=\frac{1}{2}(x^{2}-1)^{2}, \text{ and }V_{2}(x,y)=\frac{1}{2}(x-y)^{2}.
\]

It is easy to see that in this case the corresponding invariant measure associated with the fast process $Y$ is the Gaussian measure
\[
\mu_{x}(dy)=\frac{1}{\sqrt{\pi}}e^{-(x-y)^{2}}.
\]

Hence, we obtain that $X^{\eps}_{t}\rightarrow \bar{X}_{t}$ in probability, where $\bar{X}_{t}$ satisfies the ordinary differential equation
\[
d\bar{X}_{t}=-2\bar{X}_{t}\left(\bar{X}^{2}_{t}-1\right) \,dt, \quad \bar{X}_{0}=x_{0}.
\]

It is easy to see that the dynamical system associated with $\bar{X}_{t}$ has two stable equilibria located at $-1$ and $1$ and an unstable one at $0$, with solutions converging exponentially fast to either $-1$ or to $1$ depending on which domain of attraction the initial point $x_{0}$ is.

Now let us set $H(\eta)=(\eta-3)^{2}$. We are interested in computing
\[
\theta(\eps)=\mathbb{E}\left[e^{-h^{2}(\eps)H(\eta^{\eps}_{T})}\right].
\]

Computation of $\theta(\eps)$ is associated to rare events if for example $x_{0}=-1$ and the setting falls in the scalings considered in this paper.

\subsubsection{Moderate deviations based scheme}

Let us now develop the moderate deviations importance sampling scheme. The MD related HJB equation  boils down to
\begin{eqnarray}
\partial_{t}G(t,\eta)- V_{1}^{''}(\bar{X}(t))\eta \partial_{\eta}G(t,\eta)-  D|\partial_{\eta}G(t,\eta)|^{2}&=&0
\nonumber\\
G(T,\eta)&=&H(\eta).\label{Eq:HJB_2_d_MD_a}
\end{eqnarray}

Now, in our case $V_{1}^{''}(x)=6x^{2}-2$ and our goal is to construct subsolutions to \eqref{Eq:HJB_2_d_MD_a}. Let us define
\[
c(t)=V_{1}^{''}(\bar{X}(t))=6|\bar{X}(t)|^{2}-2,
\]
and set $\gamma=3$. %and consider the following function
%\begin{equation}
%\check{U}(t,\eta)=
%\begin{cases}
%\frac{c(t)(\gamma e^{\int_{0}^{T}c(s)ds}-\eta e^{\int_{0}^{t}c(s)ds})^2}{-2D e^{2\int_{0}^{t}c(s)ds}+(c(t)+2D)e^{2\int_{0}^{T}c(s)ds}} & |x_{0}|< 1/\sqrt{2} \\
%\frac{(\gamma e^{\int_{0}^{T}c(s)ds}-\eta e^{\int_{0}^{t}c(s)ds})^2}{-2D e^{2\int_{0}^{t}c(s)ds}+(1+2D)e^{2\int_{0}^{T}c(s)ds}} & |x_{0}|\geq 1/\sqrt{2}.
%\end{cases}
%\label{Eq:Subsolution_2_D_a}
%\end{equation}

%Using the fact that $\bar{X}(t)$ is attracted to either $-1$ or $1$ depending on the initial condition and \eqref{Eq:HJB_2_d_MD_a}, we obtain that  $\check{U}(t,\eta)$ as defined by \eqref{Eq:Subsolution_2_D_a} is a subsolution to \eqref{Eq:HJB_2_d_MD_a} according to Definition \ref{Def:ClassicalSubsolution}. In addition, if for example $x_{0}=-1$ then the function
%\[
%U(t,\eta)=\frac{(\gamma e^{4T}-\eta e^{4t})^2}{-\frac{D}{2} e^{8t}+(1+\frac{D}{2})e^{8T}}
%\]
%is an exact solution to \eqref{Eq:HJB_2_d_MD_a}. The reason is that if $x_{0}=-1$, then $\bar{X}(t)=-1$ for every $t$, which then implies that $c(t)=4$ for every $t\geq 0$.

 We will be interested in rare events, which, in this case, means that the initial point is close to the stable equilibrium points of the limiting dynamics $\bar{X}_{t}$.  In particular, direct substitution shows that  if for example $x_{0}=-1$ then the function
\[
U(t,\eta)=\frac{(\gamma e^{4T}-\eta e^{4t})^2}{-\frac{D}{2} e^{8t}+(1+\frac{D}{2})e^{8T}}
\]
is an exact solution to \eqref{Eq:HJB_2_d_MD_a}. The reason is that if $x_{0}=-1$, then $\bar{X}(t)=-1$ for every $t$ (hence $c(t)=4$ for every $t\geq 0$).

 More generally, if $|x_{0}|>1/\sqrt{2}$ then
\begin{equation}
\check{U}(t,\eta)=\frac{(\gamma e^{\int_{0}^{T}c(s)ds}-\eta e^{\int_{0}^{t}c(s)ds})^2}{-2D e^{2\int_{0}^{t}c(s)ds}+(1+2D)e^{2\int_{0}^{T}c(s)ds}},
\label{Eq:Subsolution_2_D_a}
\end{equation}
is a subsolution to \eqref{Eq:HJB_2_d_MD_a} according to Definition \ref{Def:ClassicalSubsolution}. Indeed, by direct substitution, we may compute
\begin{align}
\partial_{t}\check{U}(t,\eta)- c(t)\eta \partial_{\eta}\check{U}(t,\eta)-  D|\partial_{\eta}\check{U}(t,\eta)|^{2}&=\frac{(\gamma e^{\int_{0}^{T}c(s)ds}-\eta e^{\int_{0}^{t}c(s)ds})^2}{\left(-2D e^{2\int_{0}^{t}c(s)ds}+(1+2D)e^{2\int_{0}^{T}c(s)ds}\right)^{2}}4D e^{\int_{0}^{t}c(s)ds}(c(t)-1).\nonumber
\end{align}

Noticing now that $c(t)-1=6(|\bar{X}(t)|^{2}-1/2)$, we obtain that because we assumed that $|x_{0}|>1/\sqrt{2}$ and because $\bar{X}(t)$ is attracted to either $-1$ or $1$ depending on the initial condition, then  $|\bar{X}(t)|^{2}>1/2$ for every $t\geq0$. This implies that $(c(t)-1)>0$ which then yields the validity of the subsolution property based on Definition \ref{Def:ClassicalSubsolution} (the inequality at the terminal time $T$ is clearly true by setting $t=T$ in (\ref{Eq:Subsolution_2_D_a})).

\subsubsection{Simulation Results for two--scale system}
Let us now summarize the results for the 2--d slow--fast problem described in this section.
We consider the case of Regime 2, in which case, by Theorem \ref{T:UniformlyLogEfficient}, the nearly optimal control is given by
$u(t,\eta,y)=\left(u_{1}(t,\eta,y),u_{2}(t,\eta,y)\right)$ with
\[
 u_{1}(t,\eta,y)=-\sqrt{2D}\partial_{\eta}U(t,\eta) \text{ and } u_{2}(t,\eta,y)=-\partial_{\eta}U(t,\eta).
\]

Below, we present results for the choice
\[
h(\eps)=\eps^{-0.45}.
\]

We used $N=2.5 \times 10^{6}$ trajectories with discretization step
\[
\text{T}_{\text{step}}=0.001\frac{\delta^{2}}{\eps}.
\]

In the simulation studies of Table \ref{Table4} below we chose as initial point $(x_{0},y_{0})=(-1,0)$.
\begin{table}[!ht]
\begin{center}
\begin{tabular}{|c|c|c|c|c|c|c|}
\hline
      $\eps$ & $\delta$&  $\hat{\theta}_{NMC}(\eps)$ &$\hat{\theta}_{MD}(\eps)$ &$\hat{\rho}^{\eps}_{NMC}$    & $\hat{\rho}^{\eps}_{MD}$     \\
     \hline  $0.5$ & $0.5$ & $1.33e-02 $  & $1.32e-02 $ &$ 4.74$ &  $1.73$ \\
    \hline  $0.3$ & $0.3$  &  $2.24e-03$  & $2.22e-03$ &$10.13$  & $2.64$ \\
   \hline  $0.1$ & $0.1$  & $1.64e-06$  & $1.68e-06$ &$205$  & $3.57$ \\
    \hline  $0.07$ & $0.07$   &$2.26e-08 $ & $2.66e-08 $ & $1035$  & $ 5.49$ \\
    \hline  $0.05$ & $0.05$  &$1.94e-13 $  & $1.33e-10 $ &$1569$  & $ 6.50$ \\
    \hline  $0.03$ & $0.03$  & $2.43e-36 $  & $1.05e-15 $ &$1587$&  $8.64$ \\
   \hline
\end{tabular}
\end{center}
\caption{Comparison table for slow-fast system in Regime 2.\label{Table4}}
\end{table}

Here $\hat{\theta}_{NMC}(\eps)$  and $\hat{\theta}_{MD}(\eps)$ are the point estimates based on the naive Monte Carlo and moderate deviations importance sampling scheme respectively. Analogously $\hat{\rho}^{\eps}_{NMC}$ and   $\hat{\rho}^{\eps}_{MD}$ are the relative error per sample for the naive Monte Carlo and moderate deviations importance sampling scheme respectively.

We see that if the event is not too rare then moderate deviations--based importance sampling will  work well in practice and is quite straightforward to apply here. Note that for small values of $\eps$, the naive Monte Carlo is no longer accurate as a consequence of the large relative error per sample.

\subsection{Example 2: The two-scale slow-fast system revisited}\label{SS:TwoScaleSystem_b}

Let us again consider the problem outlined in Section \ref{SS:TwoScaleSystem_a}, but consider now a rare event problem in the large deviations scaling.

We choose the cost function to be $R(x)=(x-1)^{2}$ and assume that we want to compute
\[
\theta(\eps)=\mathbb{E}\left[e^{-\frac{1}{\eps}R(X^{\eps}_{T})}\right]
\]

Computation of $\theta(\eps)$ is associated to rare events if for example $x_{0}=-1$. Indeed, in this case the function $R(x)$ is minimized at $x=1$ and for this to happen the dynamical system needs to go from one well of attraction to the other one.

\subsubsection{Moderate deviations based scheme}

Let us now develop the moderate deviations importance sampling scheme. For completeness let us first briefly discuss the situation in the large deviations scaling. With either the large deviations scaling or with the moderate deviations scaling one needs to be able to find subsolutions to the appropriate HJB equations. In the large deviations case, the appropriate HJB equation takes the form
\begin{align}
\partial_{t}G(t,x)- V_{1}^{\prime}(x) \partial_{x}G(t,x)-  D|\partial_{x}G(t,x)|^{2}&=0
\nonumber\\
G(T,x)&=R(x).\label{Eq:HJB_2d_systemLDP}
\end{align}

Solving \eqref{Eq:HJB_2d_systemLDP} requires numerical methods since the nonlinearity of $V_{1}^{\prime}(x)=2x(x^{2}-1)$ prohibits obtaining explicit solutions. Closed form subsolutions seem to be difficult to obtain as well. However, the situation is considerably easier in the moderate deviations regime.

To do so, we first need to re-express the event of interest in terms of the moderate deviations scaling. For this purpose, we have
\begin{align}
\mathbb{E}\left[e^{-\frac{1}{\eps}R(X^{\eps}(T))}\right]=\mathbb{E}\left[e^{-h^{2}(\eps)H(\eta^{\eps}(T);\beta)}\right],\label{Eq:MD_Estimation0}
\end{align}
where we have defined
\begin{eqnarray}
H(\eta;\beta)&=&
\left(\eta-\frac{1-\bar{X}(T)}{\beta}\right)^{2}, \text{ where }\beta=\sqrt{\eps}h(\eps),\nonumber
\end{eqnarray}
and
\[
\eta^{\eps}(t)=\frac{X^{\eps}(t)-\bar{X}(t)}{\sqrt{\eps}h(\eps)}.
\]

Now before proceeding, we need to comment on \eqref{Eq:MD_Estimation0}. Notice that the terminal condition that appears under the moderate deviations scaling on \eqref{Eq:MD_Estimation0} depends on $\eps$, i.e., $H(\eta;\sqrt{\eps}h(\eps))$. However, the theory that has been developed in this paper is for terminal conditions that are independent of $\eps$. This is an issue that naturally comes up in applications, and to the best of our knowledge it was first addressed in the recent works \cite{Johnson2015,DupuisJohnson2017}. Even though the setup of \cite{Johnson2015, DupuisJohnson2017} is different from ours, the discussion on this issue is essentially the same.

In every simulation problem of this sort, independently of whether it is large deviations or moderate deviations, we are dealing with a given, specific, value of $\eps$ which may or may not be sufficiently small. Then one does the simulation with the method of choice and with the specific given value of $\eps$ with the  expectation that the theory will be true to a certain degree at least. If we want to use the moderate deviations scaling, then inevitably (at least for the problem studied here) the cost function, $H(\eta)$ will depend on $\eps$ through the term $\beta=\sqrt{\eps} h(\eps)$. However, as is discussed in \cite{Johnson2015, DupuisJohnson2017}, and we also confirm via simulation here, the choice of the embedding for the value of $\beta$ does not influence the limiting logarithmic asymptotic. %We also verify this by simulation here. \textcolor{red}{[K: Go over this again.]}%In particular, one may even choose $h(\eps)$ so that $\beta=1$.

The MD related HJB equation  boils down to
\begin{eqnarray}
\partial_{t}G(t,\eta)- V_{1}^{''}(\bar{X}(t))\eta \partial_{\eta}G(t,\eta)-  D|\partial_{\eta}G(t,\eta)|^{2}&=&0
\nonumber\\
G(T,\eta)&=&H(\eta;\beta).\label{Eq:HJB_2_d_MD}
\end{eqnarray}

Now, in our case $V_{1}^{''}(x)=6x^{2}-2$ and our goal is to construct subsolutions to \eqref{Eq:HJB_2_d_MD}. Let us recall the definition $c(t)=V_{1}^{''}(\bar{X}(t))=6|\bar{X}(t)|^{2}-2$ and
%\[
%c(t)=V_{1}^{''}(\bar{X}(t))=6|\bar{X}(t)|^{2}-2,
%\]
set $\gamma=\frac{1-\bar{X}(T)}{\beta}$. With these definitions, consider now the function $\check{U}(t,\eta)$ defined in \eqref{Eq:Subsolution_2_D_a}, but with this new value for $\gamma$ now, which as we discussed before is  a subsolution to \eqref{Eq:HJB_2_d_MD} according to Definition \ref{Def:ClassicalSubsolution}.

%\begin{equation}
%\bar{U}(t,\eta)=
%\begin{cases}
%\frac{c(t)(\gamma e^{\int_{0}^{T}c(s)ds}-\eta e^{\int_{0}^{t}c(s)ds})^2}{-2D e^{2\int_{0}^{t}c(s)ds}+(c(t)+2D)e^{2\int_{0}^{T}c(s)ds}} & |x_{0}|< 1/\sqrt{2} \\
%\frac{(\gamma e^{\int_{0}^{T}c(s)ds}-\eta e^{\int_{0}^{t}c(s)ds})^2}{-2D e^{2\int_{0}^{t}c(s)ds}+(1+2D)e^{2\int_{0}^{T}c(s)ds}} & |x_{0}|\geq 1/\sqrt{2}. \\
%\end{cases}
%\label{Eq:Subsolution_2_D}
%\end{equation}
%
%Using the fact that $\bar{X}(t)$ is attracted to either $-1$ or $1$ depending on the initial condition and \eqref{Eq:HJB_2_d_MD}, we obtain that  $\bar{U}(t,\eta)$ as defined by \eqref{Eq:Subsolution_2_D} is a subsolution to \eqref{Eq:HJB_2_d_MD} according to Definition \ref{Def:ClassicalSubsolution}. In addition, if for example $x_{0}=-1$ then the function
%\[
%U(t,\eta)=\frac{(\gamma e^{4T}-\eta e^{4t})^2}{-\frac{D}{2} e^{8t}+(1+\frac{D}{2})e^{8T}}
%\]
%with $\gamma=2/\beta$ is an exact solution to \eqref{Eq:HJB_2_d_MD}. The reason is that if $x_{0}=-1$, then $\bar{X}(t)=-1$ for every $t$, which then implies that $c(t)=4$ for every $t\geq 0$.

By Theorem \ref{T:UniformlyLogEfficient}, the nearly optimal control is given by
\begin{itemize}
\item{Regime 1: ${u}(t,\eta,y)=\left({u}_{1}(t,\eta,y),{u}_{2}(t,\eta,y)\right)$ with
\[
 {u}_{1}(t,\eta,y)=-\sqrt{2D}\partial_{\eta}U(t,\eta) \text{ and } {u}_{2}(t,\eta,y)=0.
\]}
\item{Regime 2: ${u}(t,\eta,y)=\left({u}_{1}(t,\eta,y),{u}_{2}(t,\eta,y)\right)$ with
\[
 {u}_{1}(t,\eta,y)=-\sqrt{2D}\partial_{\eta}U(t,\eta) \text{ and } {u}_{2}(t,\eta,y)=-\partial_{\eta}U(t,\eta).
\]}
\end{itemize}

\subsubsection{Simulation Results for two-scale system}
Let us now summarize the results for the  problem described in this section.
Table \ref{Table2} has the  simulation results for this system in the case of Regime 1, whereas Table \ref{Table3} has the  simulation results for the system in the case of Regime 2.

Below, we present results for the choice
\[
h(\eps)=\eps^{-0.4}.
\]

For the sake of completeness, we mention here that the same simulations were also performed with $h(\eps)=\eps^{-0.5}$ (which is closer to the large deviations scaling), with $h(\eps)=\eps^{-0.1}$ and with $h(\eps)=\eps^{-0.2}$. In all of these cases, the results were statistically the same.

We used $N=2.5 \times 10^{6}$ trajectories with discretization step
\[
\text{T}_{\text{step}}=0.001\frac{\delta^{2}}{\eps}.
\]
In the simulation studies below we chose as initial point $(x_{0},y_{0})=(-1,0)$.
\begin{table}[!ht]
\begin{center}
\begin{tabular}{|c|c|c|c|c|c|c|c|c|}
\hline
      $\eps$ & $\delta$& $\eps/\delta$ & $j_{1}=\frac{\delta/\eps}{\sqrt{\eps}h(\eps)}$ & $\hat{\theta}_{NMC}(\eps)$ &$\hat{\theta}_{MD}(\eps)$ &$\hat{\rho}^{\eps}_{NMC}$    & $\hat{\rho}^{\eps}_{MD}$     \\
     \hline  $0.5$ & $0.3$ &  $1.67$& $0.64$& $8.85e-02 $  & $8.84e-02 $ &$ 2.59$ &  $0.97$ \\
    \hline  $0.25$ & $0.1$  & $2.5$ &$0.46$&  $1.18e-02$  & $1.18e-02$ &$7.30$  & $1.47$ \\
   \hline  $0.125$ & $0.04$  & $3.125$ & $0.39$& $2.66e-04$  & $2.57e-04$ &$45.71$  & $1.83$ \\
    \hline  $0.0625$ & $0.015$  & $4.17$ &$0.32$ &$1.10e-07 $ & $1.25e-07 $ & $1100$  & $ 2.86$ \\
    \hline  $0.03125$ & $0.0065$  & $4.81$ & $0.29$ &$3.11e-31 $  & $3.46e-14 $ &$1067$  & $ 5.53$ \\
    \hline  $0.025$ & $0.0045$  & $5.56$ &$0.26$& $2.75e-38 $  & $1.57e-17 $ &$1587$&  $13.94$ \\
 \hline
\end{tabular}
\end{center}
\caption{Comparison table for 2-d slow fast system in Regime 1.\label{Table2}}
\end{table}

\begin{table}[!ht]
\begin{center}
\begin{tabular}{|c|c|c|c|c|c|c|}
\hline
      $\eps$ & $\delta$&  $\hat{\theta}_{NMC}(\eps)$ &$\hat{\theta}_{MD}(\eps)$ &$\hat{\rho}^{\eps}_{NMC}$    & $\hat{\rho}^{\eps}_{MD}$     \\
     \hline  $0.5$ & $0.5$ & $1.04e-01 $  & $1.04e-01 $ &$ 2.35$ &  $1.52$ \\
    \hline  $0.25$ & $0.25$  &  $1.93e-02$  & $1.95e-02$ &$5.72$  & $1.17$ \\
   \hline  $0.125$ & $0.125$  & $1.06e-03$  & $1.09e-03$ &$23.67$  & $1.50$ \\
    \hline  $0.0625$ & $0.0625$   &$4.37e-06 $ & $4.91e-06 $ & $313$  & $ 2.45$ \\
    \hline  $0.03125$ & $0.03125$  &$9.57e-19 $  & $1.27e-10 $ &$1556$  & $ 6.34$ \\
    \hline  $0.025$ & $0.025$  & $4.67e-34 $  & $6.51e-13 $ &$1417$&  $9.14$ \\
       \hline  $0.015$ & $0.015$  & $0 $  & $1.62e-20 $ &$-$&  $15.38$ \\
 \hline
\end{tabular}
\end{center}
\caption{Comparison table for 2-d slow fast system in Regime 2.\label{Table3}}
\end{table}

We see that if the event is not too rare then moderate deviations--based importance sampling will  work well in practice and is quite straightforward to apply here. As in the previous example, we see that when $\eps$ becomes sufficiently small, the relative error for the naive Monte Carlo estimator grows until the estimator is no longer accurate. The large deviations counterpart would require numerically solving the related HJB equation, which in this case can of course be done, but it is computationally considerably more expensive than implementing  the moderate deviations--based scheme.

\subsection{Example 3 - Rare event simulation in rough potentials}\label{SS:PeriodicProblem}
Let us consider the following Langevin equation
\begin{equation}
dX^{\eps}_{t}=\left[ -\frac{\eps }{\delta }\nabla Q\left( \frac{%
X^{\eps }_{t}}{\delta }\right) -\nabla V\left( X^{\eps}_{t}\right) %
\right] dt+\sqrt{\eps }\sqrt{2D}dW_{t},\hspace{0.2cm}X^{\eps}_{0}=x_{0}. \nonumber%\label{Eq:LangevinEquation2}
\end{equation}

Define the potential function to be
\begin{equation}
V(x)=\frac{1}{2}x^2, \qquad Q(y)=\cos(y)+\sin(y).\nonumber
\end{equation}

Notice that this is overdamped Langevin equation describing the motion of a Brownian particle in a rough potential, see for example \cite{Z88,DSW12}.

We choose the cost function $R(\cdot)$ to be
\begin{eqnarray}
R(x)&=&
\begin{cases}
(x-1)^2 & x\geq 0  \\
(x+1)^2 & x<0.
\end{cases}\nonumber
\end{eqnarray}

We want to compute
\[
\mathbb{E}\left[e^{-\frac{1}{\eps}R(X^{\eps}_{T})}\right].
\]

Now, this is a rare event problem because the function $e^{-\frac{1}{\eps}R(x)}$ is maximized at $x=\pm 1$, but in order for the process $X^{\eps}_{t}$ to hit the points $\pm 1$ a rare event has to take place.

This problem was studied in \cite{DSW12} using large deviations methods. This is a good example to compare large deviations--based importance sampling methods to moderate deviations--based importance sampling methods because for both cases one can compute appropriate subsolutions to the corresponding HJB problems. Therefore, one can compute the exact form of the change of measure for both cases. In Subsection \ref{LDbasedISperiodic} we review the large deviations--based importance sampling change of measure and in Subsection \ref{MDbasedISperiodic} we go over the moderate deviations--based importance sampling change of measure.

\subsubsection{Large deviations based scheme}\label{LDbasedISperiodic}
Define $\mathbb{T}$ to be the torus in dimension one with period $\lambda=2\pi$. Let us set
\[
L=\int_{\mathbb{T}}e^{-\frac{Q(y)}{D}}dy,\hspace{0.5cm}\hat{L}=\int
_{\mathbb{T}}e^{\frac{Q(y)}{D}}dy.
\]
and $\kappa=\frac{4\pi^{2}}{L\hat{L}}$. Now, by the results of \cite{DSW12} we obtain that the limiting LD related HJB equation boils down to
\begin{eqnarray}
\partial_{t}G(t,x)-\kappa x \partial_{x}G(t,x)- \kappa D|\partial_{x}G(t,x)|^{2}&=&0
\nonumber\\
G(T,x)&=&R(x).\nonumber%\label{Eq:HJBequationExample1}
\end{eqnarray}

One can solve this equation explicitly and obtain
\begin{eqnarray}
G(t,x)&=&
\begin{cases}
\frac{(e^{\kappa T}-x e^{\kappa t})^2}{-2D e^{2\kappa t}+(1+2D)e^{2\kappa T}} & x\geq 0  \\
\frac{(e^{\kappa T}+x e^{\kappa t})^2}{-2D e^{2\kappa t}+(1+2D)e^{2\kappa T}} & x<0.
\end{cases}\nonumber%\label{Eq:SolutionExample1}
\end{eqnarray}

Notice that $G$ is not smooth at $x=0$. One can fit this problem into the subsolution framework by defining $G_{1}(t,x)=\frac{(e^{\kappa T}-x e^{\kappa t})^2}{-2D e^{2\kappa t}+(1+2D)e^{2\kappa T}}$ and
$G_{2}(t,x)=\frac{(e^{\kappa T}+x e^{\kappa t})^2}{-2D e^{2\kappa t}+(1+2D)e^{2\kappa T}}$ and then considering the subsolution $\check{G}(t,x)=\min\{G_{1}(t,x), G_{2}(t,x)\}$. In
general one should mollify it in order to produce a smooth subsolution (see \cite{DupuisSpiliopoulosZhou2013}), but it
is known (see \cite{VandenEijndenWeare} for an analogous situation) that mollification is not needed here since the  discontinuity is along only one interface.

By the results of \cite{DSW12}, the nearly optimal control is given by ${u}(t,x,y)=-\sqrt{2D}(1+\frac{\partial\chi}{\partial y}(y))\partial_{x}G(t,x)$.
Then observing that
\begin{equation*}
1+\frac{\partial \chi}{\partial y}(y)=\frac{2\pi}{\hat{L}}e^{Q(y)/D}=\frac{2\pi}{\hat{L}}e^{(\cos(y)+\sin(y))/D}
\end{equation*}
we obtain the expression for the optimal control
\begin{equation}
u_{LD}(t,x)\doteq{u}(t,x)=
\begin{cases}
-\sqrt{2D}\frac{2\pi}{\hat{L}} e^{(\cos(\frac{x}{\delta})+\sin(\frac{x}{\delta}))/D}\frac{-2 e^{\kappa t}( e^{\kappa T}-x e^{\kappa t})}{-2D e^{2\kappa t}+(1+2D) e^{2\kappa T}} & x> 0  \\
-\sqrt{2D}\frac{2\pi}{\hat{L}} e^{(\cos(\frac{x}{\delta})+\sin(\frac{x}{\delta}))/D}\frac{2 e^{\kappa t}( e^{\kappa T}+x e^{\kappa t})}{-2D e^{2\kappa t}+(1+2D) e^{2\kappa T}} & x<0.
\end{cases}\nonumber
%\label{Eq:OptimalControlRegime1Example1}
\end{equation}

\subsubsection{Moderate deviations based scheme}\label{MDbasedISperiodic}
We first need to re-express the event of interest in terms of the moderate deviations scaling. For this purpose, we have
\begin{align}
\mathbb{E}\left[e^{-\frac{1}{\eps}R(X^{\eps}_{T})}\right]=\mathbb{E}\left[e^{-h^{2}(\eps)H(\eta^{\eps}_{T};\sqrt{\eps}h(\eps))}\right]\label{Eq:MD_Estimation}
\end{align}
where for $\beta>0$, we have defined
\begin{eqnarray}
H(\eta;\beta)&=&
\begin{cases}
\left(\eta-\frac{1-\bar{X}_{T}}{\beta}\right)^2 & \eta+\frac{\bar{X}_{T}}{\beta}\geq 0  \\
\left(\eta+\frac{1+\bar{X}_{T}}{\beta}\right)^2 & \eta+\frac{\bar{X}_{T}}{\beta}<0
\end{cases}\nonumber
\end{eqnarray}
and
\[
\eta^{\eps}_{t}=\frac{X^{\eps}_{t}-\bar{X}_{t}}{\sqrt{\eps}h(\eps)}.
\]

%Now before proceeding, we need to comment on \eqref{Eq:MD_Estimation}. Notice that the terminal condition that appears under the moderate deviations scaling on \eqref{Eq:MD_Estimation} depends on $\eps$, i.e., $H(\eta;\sqrt{\eps}h(\eps))$. However, the theory that has been developed in this paper is for terminal conditions that are independent of $\eps$. This is an issue that naturally comes up in applications, and to the best of our knowledge it was first addressed in the recent paper \cite{DupuisJohnson2017}. Even though the setup of \cite{DupuisJohnson2017} is different from ours, the discussion on this issue is essentially the same.
%
%In every simulation problem of this sort, independently of whether it is large deviations or moderate deviations, we are dealing with a given, specific, value of $\eps$ which may or may not be sufficiently small. Then one does the simulation with the method of choice and with the specific given value of $\eps$ with the hope and expectation that the theory will be true to a certain degree at least. If we want to use the moderate deviations scaling, then inevitably (at least for the problem studied here) the cost function, $H(\eta)$ will depend on $\eps$ through the term $\beta=\sqrt{\eps} h(\eps)$. However, as it is discussed in \cite{DupuisJohnson2017}, the choice of the embedding for the value of $\beta$ does not matter as long as the limiting logarithmic asymptotic are concerned. In particular, one may even choose $h(\eps)$ so that $\beta=1$.

As in the previous section, we note the ambiguity of the dependence on the terminal condition in \eqref{Eq:MD_Estimation} on  $\eps$, i.e., $H(\eta;\sqrt{\eps}h(\eps))$. Now, in this problem if we choose $h(\eps)=1/\sqrt{\eps}$ then the problem becomes immediately an importance sampling problem with the large deviations scaling. Since, we are interested in seeing the effect of the moderate deviations scaling, we will choose different values of $h(\eps)$ and as we will present below the effect of the choice is minimal in the asymptotic regime.

The limiting MD related HJB equation  boils down to
\begin{eqnarray}
\partial_{t}G(t,\eta)-\kappa V''(\bar{X}(t))\eta \partial_{\eta}G(t,\eta)- \kappa D|\partial_{\eta}G(t,\eta)|^{2}&=&0
\nonumber\\
G(T,x)&=&H(\eta;\beta).\label{Eq:HJBequationExampleMD}
\end{eqnarray}

Ignoring the terminal condition from \eqref{Eq:HJBequationExampleMD}, we notice that  the main equation is the same for both large deviations and moderate deviations (notice that $V''(x)=1$). Next we assume that the initial point is exactly the stable equilibrium point, i.e.,  $x_{0}=0$. In this case, we actually have that
\begin{eqnarray}
H(\eta;\beta)&=&
\begin{cases}
\left(\eta-\frac{1}{\beta}\right)^2 & \eta\geq 0  \\
\left(\eta+\frac{1}{\beta}\right)^2 & \eta<0
\end{cases}\nonumber
\end{eqnarray}

and we obtain that a viscosity solution to \eqref{Eq:HJBequationExampleMD} is given by
\begin{eqnarray}
G(t,\eta)&=&
\begin{cases}
\frac{(\beta e^{\kappa T}-\eta e^{\kappa t})^2}{-2D e^{2\kappa t}+(1+2D)e^{2\kappa T}} & \eta\geq 0  \\
\frac{(\beta e^{\kappa T}+\eta e^{\kappa t})^2}{-2D e^{2\kappa t}+(1+2D)e^{2\kappa T}} & \eta<0.
\end{cases}\nonumber%\label{Eq:SolutionExampleMD}
\end{eqnarray}

%Due to Remark
%\ref{R:Subsolution1}, we can actually consider the control $\bar{u}_{\eta}$ with $\eta=0$, i.e., $\bar{u}_{\eta}=\bar{u}_{G}$.

By Theorem \ref{T:UniformlyLogEfficient}, the nearly optimal control is given by ${u}(t,\eta,y;\beta)=-\sqrt{2D}(1+\frac{\partial\chi}{\partial y}(y))\partial_{\eta}G(t,\eta;\beta)$ and we obtain the expression for the optimal control
\begin{equation}
u_{MD}(t,\eta, x;\beta)\doteq{u}(t,\eta, x/\delta;\beta)=
\begin{cases}
-\sqrt{2D}\frac{2\pi}{\hat{L}} e^{(\cos(\frac{x}{\delta})+\sin(\frac{x}{\delta}))/D}\frac{-2 e^{\kappa t}( \beta e^{\kappa T}-\eta e^{\kappa t})}{-2D e^{2\kappa t}+(1+2D) e^{2\kappa T}} & \eta> 0  \\
-\sqrt{2D}\frac{2\pi}{\hat{L}} e^{(\cos(\frac{x}{\delta})+\sin(\frac{x}{\delta}))/D}\frac{2 e^{\kappa t}( \beta e^{\kappa T}+\eta e^{\kappa t})}{-2D e^{2\kappa t}+(1+2D) e^{2\kappa T}} & \eta<0.
\end{cases}
\nonumber%\label{Eq:OptimalControlRegime1ExampleMD}
\end{equation}

\subsubsection{Simulation Results for diffusion in rough potential}
In this section we summarize the results for the rough potential problem described in Subsection \ref{SS:PeriodicProblem}. Table \ref{Table1} has the related simulation results.

Let us choose $D=1$, initial point  $(t_{0},x_{0})=(0,0)$ and final time $T=1$. We  calculate that
\begin{equation*}
\hat{L}=9.84\textrm{ and } \kappa=\frac{4\pi^{2}}{L\hat{L}}=0.408.
\end{equation*}

Below we present results for the embedding (in the moderate deviations case)
\[
h(\eps)=\eps^{-0.4}
\]
which then implies that if $\delta$ decays sufficiently fast that
\[
\lim_{\eps\downarrow 0}j_{1}(\eps)=\lim_{\eps\downarrow 0}\frac{\delta/\eps}{\sqrt{\eps}h(\eps)}=0.
\]

For the sake of completeness, we mention here that we repeated the same simulation study but with $h(\eps)=\eps^{-0.1}$ and the results were statistically the same.

We used $N=5 \times 10^{6}$ trajectories with discretization step
\[
\text{T}_{\text{step}}=0.001\frac{\delta^{2}}{\eps}.
\]

\begin{table}[!ht]
\begin{center}
\begin{tabular}{|c|c|c|c|c|c|c|c|c|c|c|}
\hline
      $\eps$ & $\delta$& $\eps/\delta$ & $j_{1}=\frac{\delta/\eps}{\sqrt{\eps}h(\eps)}$ & $\hat{\theta}_{NMC}(\eps)$ & $\hat{\theta}_{LD}(\eps)$& $\hat{\theta}_{MD}(\eps)$ &$\hat{\rho}^{\eps}_{NMC}$   & $\hat{\rho}^{\eps}_{LD}$ & $\hat{\rho}^{\eps}_{MD}$     \\
     \hline  $0.25$ & $0.1$ &  $2.5$& $0.46$& $2.17e-01 $ &$2.18e-01$ & $2.17e-01 $ &$ 1.11$ & $7.86$ & $7.32$ \\
    \hline  $0.125$ & $0.04$  & $3.125$ &$0.39$&  $3.42e-02$ &$3.42e-02$ & $3.42e-02$ &$2.72$ & $4.28$ & $5.68$ \\
   \hline  $0.0625$ & $0.015$  & $4.167$ & $0.32$& $7.91e-04$ &$7.96e-04$ & $7.98e-04$ &$10.75$ & $3.50 $ & $3.43$ \\
    \hline  $0.03125$ & $0.007$  & $4.47$ &$0.31$ &$4.06e-07 $ &$4.54e-07$& $4.56e-07 $ & $104.23$ & $4.05$ & $ 4.56$ \\
    \hline  $0.025$ & $0.005$  & $5.0$ & $0.28$ &$9.51e-09 $ &$1.04e-08$ & $1.04e-08 $ &$248.73$ & $ 3.92$ & $ 5.36$ \\
    \hline  $0.02$ & $0.003$  & $6.06$ &$0.24$& $5.85e-11 $ & $9.03e-11$ & $9.0e-11 $ &$517.54$& $3.23$ & $5.53$ \\
 \hline
\end{tabular}
\end{center}
\caption{Comparison table for periodic diffusion in rough potential.\label{Table1}}
\end{table}

The conclusion from Table \ref{Table1} is that large deviations--based importance sampling works better if it can be done, but if the event is not too rare then moderate deviations--based importance sampling will also work well. The relative error for the moderate deviations estimator is larger than that of the large deviations estimator, and grows more rapidly as $\eps$ decreases. In comparison, both the moderate deviations estimator and the large deviations estimator have superior performance to the naive Monte Carlo estimator, for which the relative error grows rapidly and the estimator is no longer accurate for small $\eps$.

However, when dealing with multiscale systems it is rarely the case that one can actually write down subsolutions to the large deviations related HJB equations, but sometimes one can do so for moderate deviations--based importance sampling. We saw an example in this direction in Section \ref{SS:TwoScaleSystem_b}.  As discussed there, finding a subsolution for the large deviations related HJB equation would require resorting to numerical methods. However, it was straightforward to find an explicit solution to the moderate deviations related HJB equation.

\appendix
\section{Some useful lemmas}\label{S:AppendixA}

The following theorem collects results from~\cite{PV01} and~\cite{PV03} that are used in this paper.
\begin{theorem} [Results from~\cite{PV01} and~\cite{PV03}]\label{T:regularity}
Let Conditions \ref{C:growth} and \ref{C:ergodic} be satisfied. In Regime $i = 1, 2$ we have that,
\begin{enumerate}[(i)]
\item There exists a unique invariant measure $\mu_{i, x}(dy)$ associated with the operator $\opL_{i, x}$. For all $x \in \mathbb{R}^n$ and $q \in \mathbb{N}$,
\begin{equation*}
    \int_\mathcal{Y} \lvert y \rvert^q \,\mu_{i, x}(dy) < \infty.
\end{equation*}
Moreover, $\mu_{i, x}$ has a density which is twice differentiable in $x$.

\item Assume that $G(x, y) \in \mathcal{C}^{2, \alpha}(\mathbb{R}^n\times \mathcal{Y})$. Then
\begin{equation*}
    \bar{G}(x) = \int_\mathcal{Y} G(x, y) \,\mu_{i, x}(dy)
\end{equation*}
is twice differentiable in $x$.

\item Assume that $F(x, y) \in \mathcal{C}^{2, \alpha}(\mathbb{R}^n\times \mathcal{Y})$,
\begin{equation*}
    \int_\mathcal{Y} F(x, y) \,\mu_{i, x}(dy) = 0,
\end{equation*}
and that for some positive constants $K$ and $q_{F}$,
\begin{equation*}
   \lvert F(x, y) \rvert + \lVert \nabla_x F(x, y) \rVert + \lVert \nabla_x \nabla_x F(x, y) \rVert \le K (1 + \lvert y \rvert^{q_{F}}).
\end{equation*}
Then there is a unique solution from the class of functions which grow at most polynomially in $\lvert y \rvert$  to
\begin{equation*}
    \opL_{i, x} u(x, y) = - F(x, y), \quad \int_\mathcal{Y} u(x, y) \,\mu_{i, x}(dy) = 0.
\end{equation*}
Moreover, the solution satisfies $u(\cdot, y) \in \mathcal{C}^2$ for every $y \in \mathcal{Y}$, $\nabla_x \nabla_x u \in \mathcal{C}(\mathbb{R}^n\times\mathcal{Y})$, and there exist positive constant $K'$  such that
\begin{align*}
\lvert u(x, y) \rvert +  \lVert \nabla_y u(x, y) \rVert+ \lVert \nabla_x u(x, y) \rVert+ \lVert \nabla_x \nabla_x u(x, y) \rVert +  \lVert \nabla_x \nabla_y u(x, y) \rVert &\le K' (1 + \lvert y \rvert )^{q_{F}}.
\end{align*}
\end{enumerate}
\end{theorem}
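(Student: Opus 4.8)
The plan is to deduce all three statements from the theory developed by Pardoux and Veretennikov in \cite{PV01} and \cite{PV03}, so the task reduces to checking that their standing hypotheses are implied by Conditions \ref{C:growth} and \ref{C:ergodic}. The only genuinely non-bookkeeping step is the verification of the recurrence (Lyapunov) condition for the fast process; once that is in hand, uniqueness and regularity of the invariant measure, the averaging statement, and the Poisson-equation estimates are obtained by quoting the corresponding results, the nondegeneracy of $\tau_1\tau_1^\T+\tau_2\tau_2^\T$ from Condition \ref{C:ergodic}(i), and the $\mathcal C^{2}_b$-in-$x$, H\"older-in-$y$ regularity of the coefficients from Condition \ref{C:growth}.

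First I would establish the recurrence condition. In Regime $1$ the fast generator is $\opL_{1,x}$, with drift $f(x,y)=-\Gamma y+\zeta(x,y)$. Since $\zeta$ is globally Lipschitz in $y$ with constant $L_\zeta$, uniformly bounded in $x$, and grows at most linearly in $y$, one has
\begin{equation*}
\langle f(x,y),y\rangle=-\langle \Gamma y,y\rangle+\langle \zeta(x,y),y\rangle\le -\langle(\Gamma-L_\zeta I)y,y\rangle+C|y|\le -\gamma_0|y|^2+C|y|,
\end{equation*}
so $\langle f(x,y),y\rangle\le -\tfrac{\gamma_0}{2}|y|^2$ for $|y|$ large, uniformly in $x$; in Regime $2$ the same bound holds with $f$ replaced by the rescaled drift $\gamma f+g=-\Gamma y+\zeta$. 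Together with the uniform ellipticity and boundedness of $\tau_1\tau_1^\T+\tau_2\tau_2^\T$, this is exactly the strong (order $r=1$) recurrence assumption of \cite{PV01}, so that $\opL_{i,x}$ admits a unique invariant measure $\mu_{i,x}$, the fast process is exponentially ergodic uniformly on compact $x$-sets, and, because the drift is dissipative of quadratic order, $\int_{\mathcal Y}|y|^q\,\mu_{i,x}(dy)<\infty$ for every $q\in\mathbb N$. Twice differentiability of the density of $\mu_{i,x}$ in $x$ is then the corresponding statement of \cite{PV01, PV03}, whose input is the $\mathcal C^{2}_b$ dependence on $x$ and H\"older-in-$y$ regularity of $f,g,\tau_1,\tau_2$ from Condition \ref{C:growth}(iii). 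This proves (i); part (ii) follows by differentiating $\bar G(x)=\int_{\mathcal Y}G(x,y)\,\mu_{i,x}(dy)$ twice under the integral sign, using that $G\in\mathcal C^{2,\alpha}$ supplies the $x$-derivatives of the integrand, the twice-differentiable $x$-dependence of the density from (i) supplies the rest, and the polynomial moment bounds from (i) dominate the $x$-derivatives, as in the averaging result of \cite{PV01, PV03}.

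For part (iii) I would invoke the Poisson-equation theory of \cite{PV01, PV03}. Centering of $F$ together with the recurrence and nondegeneracy above give existence of a solution through the representation $u(x,y)=\int_0^\infty\mathbb E_y[F(x,Y^{x}_t)]\,dt$, the integral converging by exponential ergodicity; uniqueness within the class of at most polynomially growing functions holds because any two such solutions differ by a polynomially growing $\opL_{i,x}$-harmonic function, which the recurrence forces to be constant, and the constraint $\int_{\mathcal Y}u\,\mu_{i,x}=0$ removes that constant. Interior Schauder estimates for $\opL_{i,x}$ — available under the $\alpha$-H\"older-in-$y$, $\mathcal C^2$-in-$x$ coefficient regularity, as recalled in the Remark following Theorem \ref{T:main} via \cite{Friedman} — upgrade $u$ to $u(\cdot,y)\in\mathcal C^2$ with $\nabla_x\nabla_x u$ and $\nabla_x\nabla_y u$ continuous; finally, differentiating the representation in $x$ and combining the polynomial bound on $F,\nabla_xF,\nabla_x\nabla_xF$ with the uniform exponential ergodicity produces the growth estimate $|u|+\|\nabla_y u\|+\|\nabla_x u\|+\|\nabla_x\nabla_x u\|+\|\nabla_x\nabla_y u\|\le K'(1+|y|)^{q_F}$ with the very same exponent $q_F$ as the source term.

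The step I expect to demand the most care is matching hypotheses with \cite{PV01, PV03} precisely: checking that the coercivity $\langle(\Gamma-L_\zeta I)\xi,\xi\rangle\ge\gamma_0|\xi|^2$ plus the at-most-linear growth of $\zeta$ really places us in the $r=1$ regime, so that \emph{all} polynomial moments (not finitely many) are controlled and the growth exponent in (iii) is not inflated, and that the slightly different smoothness bookkeeping between Condition \ref{C:growth} here and the assumptions in \cite{PV01, PV03, Friedman} does yield the $\mathcal C^{2}$-in-$y$ regularity claimed for $u$.
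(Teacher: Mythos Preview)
Your proposal is correct and takes essentially the same approach as the paper: the theorem is stated in the appendix purely as a compilation of results from \cite{PV01} and \cite{PV03}, with no proof given beyond the citation, so the content of any proof is precisely the hypothesis-matching you carry out. Your verification that Condition~\ref{C:ergodic}(ii) yields the required recurrence/Lyapunov estimate, together with the nondegeneracy from Condition~\ref{C:ergodic}(i) and the regularity from Condition~\ref{C:growth}, is exactly what is implicitly being asserted when the paper attributes the theorem to those references.
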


Next, we recall some results from~\cite{MorseSpiliopoulos2017} related to certain bounds involving the controlled processes~\eqref{E:generalControlledSDE}. Notice that the lemmas below are proven in \cite{MorseSpiliopoulos2017} for the case of $u(s)=(0,0)$, but including $u(s)$ from Theorem \ref{T:UniformlyLogEfficient} does not change the proof. Also, we remark here that Lemma \ref{L:productBound} is based on Lemma \ref{L:uBound} and on the first statement of Lemma \ref{L:YIntegralgrowth} (see \cite{MorseSpiliopoulos2017} for the related details).

Lemma \ref{L:uBound} is standard, see for example Lemma B.1 of~\cite{MorseSpiliopoulos2017}.
\begin{lemma} \label{L:uBound}
Let Assumptions~\ref{C:growth} and \ref{C:ergodic} hold. Then the infimum of the representation in~\eqref{Eq:ToBeBounded} can be taken over all controls such that
\begin{equation*}
    \int_0^T  \lvert v^\eps(s)  \rvert^2 d s < R, \text{ almost surely},
\end{equation*}
where the constant $R<\infty$ does not depend on $\eps$ or $\delta$.
\end{lemma}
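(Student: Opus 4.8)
The plan is to run the classical near-minimizer–plus–truncation argument of the weak convergence approach, arranged so that every constant is uniform in $\eps$ and $\delta$. Write $M=\sup_{\eta}|H(\eta)|$ and, for $v\in\mathcal A$, let $J_\eps(v)$ denote the expectation inside the infimum in \eqref{Eq:ToBeBounded}, so that $-h^{-2}(\eps)\log Q^\eps(t_0,0;u)=\inf_{v\in\mathcal A}J_\eps(v)$. First I would note that $v\equiv 0\in\mathcal A$ and, since $-\int_{t_0}^T|u(s,\hat\eta^\eps_s,\hat Y^\eps_s)|^2\,ds\le 0$ and $0\le H\le M$, it gives $J_\eps(0)\le 2M$; hence $\inf_{v\in\mathcal A}J_\eps(v)\le 2M$ for all $\eps$, and it suffices to restrict attention to controls with $J_\eps(v)\le 2M+1$, since any control with larger cost is beaten by $v\equiv 0$. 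When $u\equiv 0$ (the situation of the cited reference) this already closes the argument: $\tfrac12\mathbb{E}\int_{t_0}^T|v|^2\,ds\le J_\eps(v)\le 2M+1$ directly, so all near–minimizers have uniformly bounded expected energy, and the truncation step below produces an error controlled by the bounded quantity $2M$.

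With the feedback $u$ of Theorem \ref{T:UniformlyLogEfficient} present, the extra work is to recover the uniform expected–energy bound. From the definition of $J_\eps$ and $-2\mathbb{E}H(\hat\eta^{\eps,v}_T)\le 0$ one has $\tfrac12\mathbb{E}\int_{t_0}^T|v(s)|^2\,ds\le (2M+1)+\mathbb{E}\int_{t_0}^T|u(s,\hat\eta^{\eps,v}_s,\hat Y^{\eps,v}_s)|^2\,ds$. By the growth of $\sigma$ and $\nabla_y\chi$ (resp.\ $\nabla_y\Phi_2$), the boundedness of $\tau_1,\tau_2$, and the boundedness of $\nabla_\eta\check U$ from Condition \ref{Cond:ExtraReg}, we get $|u(s,\eta,y)|^2\le C(1+|y|^{2(q_\sigma\vee q_b)})$ with $2(q_\sigma\vee q_b)<2$ by Condition \ref{C:Tightness}, hence $\mathbb{E}\int_{t_0}^T|u|^2\,ds\le C\bigl(1+\mathbb{E}\int_{t_0}^T|\hat Y^{\eps,v}_s|^2\,ds\bigr)$. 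Then I would reproduce the a priori estimate for $\hat Y^{\eps,v}$ underlying Lemma \ref{L:YIntegralgrowth} — exploiting the strong dissipativity of the $\hat Y$–drift — but keeping $\mathcal E(v):=\mathbb{E}\int_{t_0}^T|v|^2\,ds$ as a free quantity: splitting $|v_1-u_1|^2\le 2|v_1|^2+2|u_1|^2$ and using the linear growth of $u_1$ in $y$, the same Gronwall/Young computation yields $\mathbb{E}\int_{t_0}^T|\hat Y^{\eps,v}_s|^2\,ds\le C(1+c_\eps\,\mathcal E(v))$ with $c_\eps\le C\tfrac{\delta^2}{\eps}h^2(\eps)\to 0$ (this uses $\tfrac{\delta}{\sqrt\eps}h(\eps)\to 0$, which follows from the finiteness of $j_i$ and $\sqrt\eps h(\eps)\to 0$). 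Substituting back and absorbing the $\mathcal E(v)$–term for $\eps$ small gives a constant $C_0<\infty$, independent of $\eps$, $\delta$ and of the particular $v$, with $\mathbb{E}\int_{t_0}^T|v(s)|^2\,ds\le C_0$ for every near–minimizer.

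Finally I would truncate in time. Given $v$ with $J_\eps(v)\le 2M+1$, set $\tau_R=\inf\{t\ge t_0:\int_{t_0}^t|v|^2\,ds\ge R\}\wedge T$ and $v^R(s)=v(s)\mathbf{1}_{\{s<\tau_R\}}$; then $\int_{t_0}^T|v^R|^2\,ds\le R$ a.s., and by strong uniqueness $(\hat X^{\eps,v^R},\hat Y^{\eps,v^R})$ agrees with $(\hat X^{\eps,v},\hat Y^{\eps,v})$ on $[t_0,\tau_R]$, hence on all of $[t_0,T]$ on $\{\tau_R=T\}$. Comparing $J_\eps(v^R)$ with $J_\eps(v)$ termwise (the energy term only decreases, $-\int|u^{v^R}|^2\le 0$, and $|H|\le M$) gives
\begin{equation*}
J_\eps(v^R)-J_\eps(v)\le \mathbb{E}\Big[\mathbf{1}_{\{\tau_R<T\}}\Big(\int_{t_0}^T|u(s,\hat\eta^{\eps,v}_s,\hat Y^{\eps,v}_s)|^2\,ds+2M\Big)\Big],
\end{equation*}
and Markov's inequality gives $\mathbb{P}(\tau_R<T)=\mathbb{P}(\int_{t_0}^T|v|^2\,ds>R)\le C_0/R$. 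Combined with a uniform-in-$\eps$ bound in $L^p$, some $p>1$, on $\int_{t_0}^T|u(s,\hat\eta^{\eps,v}_s,\hat Y^{\eps,v}_s)|^2\,ds+2M$ — obtained from a higher–moment version of the $\hat Y$–estimate, $\sup_\eps\mathbb{E}\bigl(\int_{t_0}^T|\hat Y^{\eps,v}_s|^2\,ds\bigr)^p<\infty$, which is standard for the dissipative dynamics — Hölder's inequality makes the right–hand side at most $\psi(R)$ with $\psi(R)\to 0$ as $R\to\infty$, uniformly in small $\eps$ and in $v$. Hence $\inf_{\{v:\int|v|^2\le R\}}J_\eps(v)\le\inf_{v\in\mathcal A}J_\eps(v)+\psi(R)$, which together with the trivial reverse inequality is exactly the content needed: the infimum in \eqref{Eq:ToBeBounded} may be restricted to controls obeying the a.s.\ energy bound, at the cost of an error vanishing as $R\to\infty$, and in the proof of Theorem \ref{T:UniformlyLogEfficient} this $R$ is ultimately sent to infinity.

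The main obstacle is the expected–energy bound of the second paragraph: because the feedback $u$ (and, in the relaxed setting of Section \ref{S:RelaxedConditions}, its gradient) grows in the unbounded fast variable $y$, the natural energy inequality is coupled to a moment estimate for $\hat Y^{\eps,v}$ that itself contains $\mathbb{E}\int|v|^2$; closing this loop self-consistently and checking that all the $\eps$, $\delta$, $h(\eps)$ prefactors are $o(1)$ under the scaling relations is the only non-mechanical part, and it is precisely here that Conditions \ref{C:Tightness} and \ref{Cond:ExtraReg} enter. Everything else is bookkeeping.
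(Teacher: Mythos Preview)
The paper does not actually prove this lemma: it labels it ``standard'' and cites Lemma~B.1 of \cite{MorseSpiliopoulos2017}. Your proposal correctly supplies the standard near-minimizer-plus-truncation argument from the weak convergence approach, and in fact goes beyond the cited reference by carefully handling the coupling between the expected-energy bound on $v$ and the $\hat Y$-moment estimate induced by the feedback $u$; this extra step is genuine in the present setting and your resolution (tracking $\mathcal E(v)$ as a free parameter in the dissipative $\hat Y$-estimate and absorbing via $c_\eps\to 0$) is the right one.
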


\begin{lemma}[Lemma B.3 in~\cite{MorseSpiliopoulos2017}] \label{L:productBound}
Let Assumptions~\ref{C:growth} and \ref{C:ergodic} hold.
Consider any family $\{v^{\eps},\eps>0\}$ of
controls in $\mathcal{A}$ satisfying, for some $R<\infty$,\begin{equation*}
    \sup_{\eps > 0} \int_0^T  \lvert v^\eps(s) \rvert^2  d s < R
\end{equation*}
almost surely.
Let $A(x, y)$ and $B(x,y)$ be given functions and $K$, $\theta\in(0,1)$ such that
\begin{equation*}
    |A(x, y)| \le K (1 + \lvert y \rvert^\theta), \text{ and }|B(x, y)| \le K (1 + \lvert y \rvert^{2\theta}).
\end{equation*}
Then for $\alpha \in \{1, 2\}$,
\begin{enumerate}[(i)]
\item for any $p\in(1,1/\theta]$, there exists $C<\infty$ such that
$$
\mathbb{E}\left(\sup_{t\in[0,T]}\left\lvert \int_{0}^{T} A(\hat{X}_s^{\eps, v^\eps}, \hat{Y}_s^{\eps, v^\eps}) v_\alpha^\eps(s) d s \right\rvert^{2p}
+ \sup_{t\in[0,T]}\left\lvert \int_{0}^{T} B(\hat{X}_s^{\eps, v^\eps}, \hat{Y}_s^{\eps, v^\eps})  d s \right\rvert^{p}\right)\le C;
$$
\item for any $p\in(1,1/\theta]$, there exists $C<\infty$ such that for fixed $\rho > 0$ and for all $0 \le t_1 < t_1+\rho \le T$,
$$
\mathbb{E}\left(\sup_{\substack{0 \le t_1 < t_2 \le T \\ \lvert t_2 - t_1 \rvert < \rho}}\left\lvert \int_{t_1}^{t_{2}} A(\hat{X}_s^{\eps, v^\eps}, \hat{Y}_s^{\eps, v^\eps}) v_\alpha^\eps(s) d s \right\rvert^{2p}
+
\sup_{\substack{0 \le t_1 < t_2 \le T \\ \lvert t_2 - t_1 \rvert < \rho}}\left\lvert \int_{t_1}^{t_{2}} B(\hat{X}_s^{\eps, v^\eps}, \hat{Y}_s^{\eps, v^\eps})  d s \right\rvert^{p}\right)
\le C \lvert \rho \rvert^{r/\theta-1};
$$
\item for all $\zeta > 0$,
\begin{equation*}
    \lim_{\rho\downarrow 0} \limsup_{\eps \downarrow 0} \mathbb{P} \left[ \sup_{\substack{0 \le t_1 < t_2 \le T \\ \lvert t_2 - t_1 \rvert < \rho}} \left\lvert \int_{t_1}^{t_2} A(\hat{X}_s^{\eps, v^\eps}, \hat{Y}_s^{\eps, v^\eps}) v_\alpha^\eps(s) d s \right\rvert > \zeta \right] = 0
\end{equation*}
and
\begin{equation*}
    \lim_{\rho \downarrow 0} \limsup_{\eps \downarrow 0} \mathbb{P} \left[ \sup_{\substack{0 \le t_1 < t_2 \le T \\ \lvert t_2 - t_1 \rvert < \rho}} \left\lvert \int_{t_1}^{t_2} B(\hat{X}_s^{\eps, v^\eps}, \hat{Y}_s^{\eps, v^\eps})  d s \right\rvert > \zeta \right] = 0.
\end{equation*}
\end{enumerate}
\end{lemma}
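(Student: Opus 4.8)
The plan is to deduce all three statements from the integrated second--moment estimate
\[
\sup_{\eps\in(0,\eps_{0})}\mathbb{E}\int_{0}^{T}|\hat{Y}_{s}^{\eps,v^{\eps}}|^{2}\,ds\le K(R,T),
\]
which is the first statement of Lemma \ref{L:YIntegralgrowth}, together with the a.s.\ bound $\int_{0}^{T}|v^{\eps}(s)|^{2}\,ds<R$ that is built into the hypothesis (Lemma \ref{L:uBound}); in particular each component obeys $\int_{0}^{T}|v_{\alpha}^{\eps}(s)|^{2}\,ds<R$. The growth exponents $\theta$ and $2\theta$ in the hypotheses on $A$ and $B$ are tuned precisely so that raising $|A|$ to the power $2p$, or $|B|$ to the power $p$, with $p\le 1/\theta$, produces only powers of $|y|$ dominated by $|y|^{2}$; this is exactly where the restriction $p\in(1,1/\theta]$ is used.

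I would establish (ii) first and recover (i) as the special case $\rho=T$. Fix $\eps$ and argue pathwise on an arbitrary subinterval $[t_{1},t_{2}]\subseteq[0,T]$ with $t_{2}-t_{1}\le\rho$. By the Cauchy--Schwarz inequality and $|v_{\alpha}^{\eps}|\le|v^{\eps}|$,
\[
\Bigl|\int_{t_{1}}^{t_{2}}A(\hat{X}_{s}^{\eps,v^{\eps}},\hat{Y}_{s}^{\eps,v^{\eps}})\,v_{\alpha}^{\eps}(s)\,ds\Bigr|^{2p}\le R^{p}\Bigl(\int_{t_{1}}^{t_{2}}|A(\hat{X}_{s}^{\eps,v^{\eps}},\hat{Y}_{s}^{\eps,v^{\eps}})|^{2}\,ds\Bigr)^{p}.
\]
Applying Jensen's inequality to the normalized Lebesgue measure on $[t_{1},t_{2}]$ yields $\bigl(\int_{t_{1}}^{t_{2}}|A|^{2}\,ds\bigr)^{p}\le(t_{2}-t_{1})^{p-1}\int_{t_{1}}^{t_{2}}|A|^{2p}\,ds$, and the bound $|A(x,y)|\le K(1+|y|^{\theta})$ together with $2\theta p\le2$ gives $|A|^{2p}\le C(1+|\hat{Y}_{s}^{\eps,v^{\eps}}|^{2})$. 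Since $(t_{2}-t_{1})^{p-1}\le\rho^{p-1}$ and $\int_{t_{1}}^{t_{2}}(1+|\hat{Y}_{s}^{\eps,v^{\eps}}|^{2})\,ds\le\int_{0}^{T}(1+|\hat{Y}_{s}^{\eps,v^{\eps}}|^{2})\,ds$, the supremum over $\{|t_{2}-t_{1}|<\rho\}$ is dominated pathwise by $C\rho^{p-1}\int_{0}^{T}(1+|\hat{Y}_{s}^{\eps,v^{\eps}}|^{2})\,ds$, whose expectation is at most $C\rho^{p-1}$ by the displayed estimate from Lemma \ref{L:YIntegralgrowth}. Choosing $p$ equal to the largest admissible value $1/\theta$ and recalling that $r=1$ here gives the exponent $\rho^{r/\theta-1}$; putting $\rho=T$ (and taking the endpoints $t_{1}=0$, $t_{2}=T$, which also dominates $\sup_{t\in[0,T]}|\int_{0}^{t}\cdots|^{2p}$) gives the uniform bound of (i). The $B$--term is handled identically, using $|B(x,y)|\le K(1+|y|^{2\theta})$, hence $|B|^{p}\le C(1+|\hat{Y}_{s}^{\eps,v^{\eps}}|^{2})$, together with $\bigl(\int_{t_{1}}^{t_{2}}|B|\,ds\bigr)^{p}\le(t_{2}-t_{1})^{p-1}\int_{t_{1}}^{t_{2}}|B|^{p}\,ds$.

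Statement (iii) is then immediate from (ii) by Markov's inequality: for every $\zeta>0$ and all $\eps\in(0,\eps_{0})$,
\[
\mathbb{P}\Bigl[\sup_{\substack{0\le t_{1}<t_{2}\le T\\|t_{2}-t_{1}|<\rho}}\Bigl|\int_{t_{1}}^{t_{2}}A(\hat{X}_{s}^{\eps,v^{\eps}},\hat{Y}_{s}^{\eps,v^{\eps}})\,v_{\alpha}^{\eps}(s)\,ds\Bigr|>\zeta\Bigr]\le\frac{C\rho^{p-1}}{\zeta^{2p}},
\]
and, since $p>1$, the right--hand side tends to $0$ as $\rho\downarrow0$, uniformly in $\eps$; the $B$--statement follows in the same way.

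I do not anticipate a serious obstacle. The one delicate point is that the moment estimate of Lemma \ref{L:YIntegralgrowth} controls $\hat{Y}^{\eps,v^{\eps}}$ only in an $L^{2}$--in--time sense and does not localize, so $\mathbb{E}\int_{t_{1}}^{t_{2}}|\hat{Y}_{s}^{\eps,v^{\eps}}|^{2}\,ds$ cannot be made of order $t_{2}-t_{1}$; consequently all the $\rho$--smallness in (ii) has to be produced by the pathwise Hölder--in--time step, after which the supremum over the pair $(t_{1},t_{2})$ is absorbed simply by enlarging the integration interval to $[0,T]$. This is precisely what makes any chaining, Kolmogorov--Chentsov or Garsia--Rodemich--Rumsey argument unnecessary. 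The only bookkeeping to keep straight is that the stated power $\rho^{r/\theta-1}$ is the one that emerges when $p$ is pushed to $1/\theta$, with $r=1$ being the value of the parameter $r$ from the corresponding condition of \cite{MorseSpiliopoulos2017} in the present setting.
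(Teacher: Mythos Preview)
Your proposal is correct and follows precisely the route the paper indicates: the paper does not reprove Lemma~\ref{L:productBound} but defers to \cite{MorseSpiliopoulos2017}, remarking only that the argument rests on Lemma~\ref{L:uBound} and the first statement of Lemma~\ref{L:YIntegralgrowth}, which are exactly the two inputs you use. Your Cauchy--Schwarz/Jensen reduction to $\mathbb{E}\int_{0}^{T}(1+|\hat{Y}_{s}^{\eps,v^{\eps}}|^{2})\,ds$, together with the observation that $r=1$ forces the stated exponent $\rho^{r/\theta-1}$ to coincide with $\rho^{p-1}$ at the endpoint $p=1/\theta$, matches the intended argument.
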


\end{document}